\newtheorem{theorem}{Theorem}[section]
\newtheorem{qst}[theorem]{Question}
\newtheorem{thm}[theorem]{Theorem}
\newtheorem{prop}[theorem]{Proposition}
\newtheorem{lem}[theorem]{Lemma}
\newtheorem{fact}[theorem]{Fact}
\newtheorem{cor}[theorem]{Corollary}
\newtheorem{lemma}[theorem]{Lemma}
\newtheorem*{thmA}{Theorem A}
\newtheorem*{thmB}{Theorem B}
\newtheorem*{thmC}{Theorem C}
\newtheorem*{thmD}{Theorem D}
\newtheorem*{thmE}{Theorem F}
\newtheorem*{thmF}{Theorem E}
\theoremstyle{definition}
\newtheorem{defn}[theorem]{Definition}
\theoremstyle{remark}
\newcommand{\sub}{\subseteq}
\newcommand{\cl}{\operatorname{Cl}}
\newcommand{\bd}{\operatorname{Bd}}
\newcommand{\Int}{\operatorname{Int}}
\newcommand{\cal}[1]{\ensuremath{\mathcal{#1}}}
\newcommand{\Cal}[1]{\ensuremath{\mathcal{#1}}}
\newcommand{\rvec}{\mathbb{R}_{\mathrm{Vec}}}
\newcommand{\Z}{\mathbb{Z}}
\newcommand{\N}{\mathbb{N}}
\newcommand{\Q}{\mathbb{Q}}
\newcommand{\R}{\mathbb{R}}
\newcommand{\DSig}{D_{\Sigma}}
\begin{document}
\title[A tetrachotomy]{A tetrachotomy for expansions of the real ordered additive group}
\thanks{The first author was partially supported by NSF grants DMS-1300402 and DMS-1654725.
The second author was partially supported by the European Research Council under
the European Union’s Seventh Framework Programme (FP7/2007-2013) / ERC Grant agreement
no. 291111/ MODAG. A preprint of this paper was disseminated under the title ``On continuous functions definable in expansions of the ordered real additive group''.}

\subjclass[2010]{Primary 03C64 Secondary 03C40, 03D05, 03E15, 26A21, 54F45}

\author{Philipp Hieronymi}
\address
{Department of Mathematics\\University of Illinois at Urbana-Champaign\\1409 West Green Street\\Urbana, IL 61801}
\email{phierony@illinois.edu}
\urladdr{http://www.math.illinois.edu/\textasciitilde phierony}

\author{Erik Walsberg}
\address{Department of Mathematics\\ University of California, Irvine}
\email{ewalsber@uci.edu}
\urladdr{https://www.math.uci.edu/\textasciitilde ewalsber}

\date{\today}

\maketitle

\begin{abstract}
Let $\Cal R$ be an expansion of the ordered real additive group. When $\Cal R$ is o-minimal, it is known that either $\Cal R$ defines an ordered field isomorphic to $(\R,<,+,\cdot)$ on some open subinterval $I\subseteq \R$, or $\Cal R$ is a reduct of an ordered vector space. We say $\Cal R$ is field-type if it satisfies the former condition. In this paper, we prove a more general result for arbitrary expansions of $(\R,<,+)$. In particular, we show that for expansions that do not define dense $\omega$-orders (we call these type A expansions), an appropriate version of Zilber's principle holds. Among other things we conclude that in a type A expansion that is not field-type, every continuous definable function $[0,1]^m \to \R^n$ is locally affine outside a nowhere dense set. 
\end{abstract}

\section{Introduction}
A classical theme in model theory, dating back to Zilber's trichotomy conjecture \cite{Zilberconj2}, is to analyze whether model-theoretically tame structures that exhibit well-defined non-linear behavior, actually define fields. While Zilber's original conjecture has famously been proven false by Hrushovski \cite{hrushovski1993new}, Peterzil and Starchenko \cite{PS-Tri} were able to show that for o-minimal structures non-linearity yields a definable field. Restricting ourselves to expansions of the real ordered additive group, we produce in this paper a vast generalization of this result and earlier results of Marker, 
Peterzil and Pillay \cite{MPP}.\newline

\noindent
Throughout this paper, fix a first-order expansion $\mathcal{R} = (\mathbb{R},<,+,\ldots)$ of the ordered additive group of real numbers. \textbf{Definable} without modification means ``$\mathcal{R}$-definable, possibly with parameters'', and $I,J,L$ always range over nonempty bounded open subintervals of $\R$. We say $\mathcal{R}$ is \textbf{field-type} if there are definable functions $\oplus,\otimes : I^2 \to I$ such that $(I,<,\oplus,\otimes)$ is an ordered field isomorphic to $(\mathbb{R},<,+,\cdot)$.
It is easy to see that such $\oplus,\otimes$ must be continuous. We let $\rvec$ be the ordered $\R$-vector space $(\R,<,+,(x \mapsto \lambda x)_{\lambda \in \R})$.\newline

\begin{figure}[b]
\begin{tikzpicture}[every fit/.style={inner sep=0pt, outer sep=0pt, draw}]

\begin{scope}[yshift=1.5cm,y=1cm]
\draw (-2,4.5) -- (-1,3.4);
\node[right] at (-1.5,4.2) {dense $\omega$-order};
\node[right] at (-2.2,3.6) {field-};
\node[right] at (-2.2,3.2) {type};

\node[right] at (1,3.6) {no};
\node[right] at (6,3.6) {yes};
\node[right] at (-1.8,2.2) {yes};
\node[right] at (-1.8,-0.2) {no};
\node [fit={(-1,-1.5) (4,1)}] {};
\node [fit={(4,-1.5) (9,1)}] {};
\node[right] at (5.8,2.8) {\textbf{type C}};
\node[right] at (4.4,2.2) {Every continuous function};
\node[right] at (5.6,1.8) {is definable.};
\node[right] at (-0.2,2.8) {\textbf{type A, field-type}};
\node[right] at (-0.8,2.2) {For any $k$, every definable}; 
\node[right] at (-0.6,1.8) {continuous function is $C^k$};
\node[right] at (-0.2,1.4) {almost everywhere.};
\node[right] at (0,0.5) {\textbf{type A, affine}};
\node[right] at (-0.8,-0.1) {Every definable continuous}; 
\node[right] at (-0.6,-0.5) {function is affine almost};
\node[right] at (0.2,-0.9) {everywhere.};
\node[right] at (5.8,0.5) {\textbf{type B}};
\node[right] at (4.2,-0.1) {Every definable $C^2$ function}; 
\node[right] at (5.8,-0.5) {is affine.};

\end{scope}

\begin{scope}[yshift=2.5cm,y=0.8cm]
\node [fit={(-1,0) (4,3)}] {};
\node [fit={(4,0) (9,3)}] {};
\end{scope}

\end{tikzpicture}
\label{figure1}\caption{A tetrachotomy: Defining a dense $\omega$-order separates \emph{tame} and \emph{wild} expansions, being of field-type distinguishes between \emph{linear} and \emph{non-linear} expansions.} 
\end{figure}

\noindent
It follows from \cite{MPP} and Loveys and Peterzil \cite{PL} that when $\Cal R$ is o-minimal, the expansion $\Cal R$ is either field-type or is a reduct of the ordered vector space $\rvec$. Thus for o-minimal expansions of $(\R,<,+)$ a strong dichotomy into linear and field-type structures is already known. In order to prove corresponding results for arbitrary expansions of $(\R,<,+)$, we need to split the collection of all such expansions into two parts, and then prove Zilber-style dichotomy results separating linear and non-behavior for both parts. This results in a tetrachotomy of all expansions visualized in Figure \ref{figure1}. The criterion we use to separate all expansions of $(\R,<,+)$ is whether or not they define a dense $\omega$-order. An \textbf{$\omega$-orderable set} (or short: an $\omega$-order) is a definable set that is either finite or admits a definable ordering with order type $\omega$.
We say such a set is \textbf{dense} if it is dense in some nonempty open subinterval of $\mathbb{R}$. The behavior of definable sets and functions in $\Cal R$ largely depends on whether or not $\Cal R$ admits a dense $\omega$-order. We say that $\Cal R$ is \textbf{type A} if it does not admit a dense $\omega$-order. We say $\Cal R$ is \textbf{type C} if it defines every compact set and \textbf{type B} if it admits a dense $\omega$-order and is not type C.\newline

\noindent Before discussing questions of linearity versus non-linearity, we want to give a brief rationale for dividing expansions into the types A, B and C. First note that a type C expansion is as wild as can be from a model-theoretic standpoint. Indeed, every projective subset of $[0,1]^n$ in the sense of the projective hierarchy from descriptive set theory (see Kechris \cite[37.6]{kechris}), in particular every Borel function on a bounded domain, is definable in a type C structure. Even the question whether all definable sets in a fixed type C expansion are Lebesgue measurable is independent of ZFC. From a combinatorical model-theoretic point of view, type B expansions are not much better: by \cite[Theorem B]{HW-Monadic} every type B expansion defines an isomorphic copy of the standard model $(\Cal P(\N),\N,\in,+1)$ of the monadic second order theory of $(\N,+1)$, where $\Cal P(\N)$ is the power set of $\N$. It follows that a type B expansion cannot satisfy any Shelah-style model-theoretic tameness property such as $\mathrm{NIP}$ or $\mathrm{NTP}_2$ (see e.g. Simon \cite{Simon-Book} for definitions). It is also easy to see that type B expansions do not satisfy any of the classical logical-geometric tameness notions such as o-minimality, local o-minimality, or d-minimality. Thus all the usual model-theoretic and geometric tameness notions in the literature imply type A. The noteworthy exception is decidability of the theory. Examples of type B expansions with decidable theories are $(\R,<,+,x\mapsto \alpha x,\Z)$ where $\alpha \in \R$ is a quadratic irrational (see \cite{H-Twosubgroups}) and $(\R,<,+,C)$, where $C$ is the middle-thirds Cantor set (see Balderrama and Hieronymi \cite{BH-Cantor}).
We describe another interesting example in Section~\ref{automata}.
Type B expansions have received little attention within tame geometry and  model theory, but have appeared in theoretical computer science (Boigelot, Rassart, and Wolper \cite{BRW}) and fractal geometry (Charlier, Leroy, and Rigo \cite{CLR}). 
One reason might be that all known examples of type B expansions are mutually interpretable with $(\Cal{P}(\N), \N, \in, +1)$.
The theory of the latter structure was shown to be decidable by B\"uchi \cite{Buchi} using automata-theoretic rather then model-theoretic methods.\newline

\noindent Returning to the question of linearity of a given expansion, observe that every type C expansion is field-type. On the other hand, type B expansions are known to not be field-type, and we will derive various results in this paper that clarify the linearity of such structures. Arguably the main contribution of this paper is the following Zilber-style dichotomy theorem for type A expansions.

\subsection{A dichotomy for type A expansions} Before we state the result, we have to introduce some notation. A set $X\subseteq \R^n$ is $\boldsymbol{\DSig}$ if there is a definable family $\{ Y_{r,s} : r,s > 0\}$ of compact subsets of $\R^n$ such that $X = \bigcup_{r,s} Y_{r,s}$, and $Y_{r,s} \subseteq Y_{r',s'}$ if $r\leq r'$ and $s \geq s'$, for all $r,r',s,s'> 0$. A function is $\DSig$ if its graph is $\DSig$, and a definable field $(X,\oplus,\otimes)$ is $\DSig$ whenever $X$ and $\oplus,\otimes$ are $\DSig$.
All open and closed definable sets are $\DSig$ and the collection of $\DSig$-sets is closed under finite intersections, finite unions, cartesian products, and images under continuous definable functions.
A good theory of dimension, definable selection, and generic smoothness: these tools are enough to obtain many results in the o-minimal setting. By Fornasiero et al.~\cite{FHW-Compact} the first two also hold in the type A setting for $\DSig$ sets, and here we obtain the third. This allows us to extend the well-known results in \cite{PS-Tri} from the o-minimal to the more general type A setting.

\begin{thmA}
Suppose $\Cal R$ is type A.
Then the following are equivalent:
\begin{enumerate}
    \item $\Cal R$ is field-type,
    \item there is a $\DSig$ field $(X,\oplus,\otimes)$ with $\dim X > 0$,
    \item there is a $\DSig$ family $(A_x)_{x \in B}$ of subsets of $\R^n$ such that $\dim B \geq 2$, each $A_x$ is one-dimensional, and $A_x \cap A_y$ is zero-dimensional for distinct $x,y \in B$,
    \item there is a definable open $U \subseteq \R^m$ and a $\DSig$ function $f : U \to \R^n$ that is nowhere locally affine.
\end{enumerate}
In particular, if $\Cal R$ is not field-type, then every continuous definable function $U \to \R^n$, where $U$ is a definable open subset of $\R^m$, is locally affine outside a nowhere dense subset of $U$.
\end{thmA}

\noindent The equivalence of (1) and (3) essentially states that a version of Zilber's principle (as stated in \cite[Definition 1.6]{PS-Tri}) holds for type A expansions. 
If $\Cal R$ is o-minimal, then every definable set is $\DSig$. Thus Theorem A indeed generalizes the result from the o-minimal setting.\newline

\noindent 
There are type A expansions that are not field-type, yet define infinite fields. For example, the expansion of $(\R,<,+)$ by all subsets of all $\Z^n$ is locally o-minimal and hence type A (this follows from either Kawakami et al.\cite{KTTT} or Friedman and Miller \cite{FM-Sparse}).
Such pathological examples give an upper limit on what can be proven in the general setting of type A expansions.\newline

\noindent 
As pointed out above, an essential tool we need for the proof of Theorem A is generic $C^k$-smoothness for $\DSig$-functions in type A expansions.
Letting $U$ be a definable open subset of $\R^m$, we say that a property holds \textbf{almost everywhere}, or \textbf{generically}, on $U$ if there is a dense definable open subset of $U$ on which it holds.
It follows from \cite[Theorem D]{FHW-Compact} that if $\Cal R$ is type A, then a nowhere dense definable subset of $\R^k$ has null $k$-dimensional Lebesgue measure. 
So if a property holds almost everywhere in our sense, it holds almost everywhere in the usual measure-theoretic sense.

\begin{thmB}
Suppose that $\Cal R$ is type A.
Fix $k \geq 0$.
Let $U$ be a definable open subset of $\R^m$ and $f : U \to \R^n$ be $\DSig$.
Then $f$ is generically $C^k$ on $U$.
In particular, every continuous definable $f : U \to \R^n$ is generically $C^k$.
\end{thmB}

\noindent 
Thus, if $U \subseteq \R^m$ is open and $f : U \to \R^n$ is continuous and not generically $C^k$ for some $k \ge 1$, then $(\R,<,+,f)$ defines an isomorphic copy of $(\Cal P(\N),\N,\in,+1)$.
Laskowski and Steinhorn~\cite{LasStein} prove Theorem B in the o-minimal setting.
Our proof makes crucial use of their ideas. In particular, we also use a classical theorem of Boas and Widder \cite{BoasWidder}.
Theorem B fails for arbitrary definable functions, as $(\mathbb{R},<,+,\mathbb{Q})$ is type A and the characteristic function of $\mathbb{Q}$ is nowhere continuous. Further observe that Theorem B cannot be strengthened to assert that a continuous function definable in a type A expansion is generically $C^\infty$. Such a result fails already in the o-minimal setting by Rolin, Speissegger, and Wilkie \cite{RSW}.
In the case of expansions of $(\R,<,+,\cdot)$, the one variable case of Theorem B is due to Fornaserio~\cite{F-Hausdorff}.
However, this special case is substantially easier because of definability of division.\newline

\noindent In order to prove Theorem A, we need to combine Theorem B with the following result essentially due to Marker, Peterzil, and Pillay \cite[Section 3]{MPP}. 

\begin{fact}\label{fact:c2}
If $\Cal R$ defines a $C^2$ non-affine function $I \to \R$, then $\Cal R$ is field-type.
\end{fact}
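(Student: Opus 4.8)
The plan is to extract from $f$ an ordered field that is definable in $\mathcal R$ and whose order is the restriction of the usual order of $\R$, and then to observe that any such field must be $(\R,<,+,\cdot)$. First I would reduce to a convenient form: since $f$ is non-affine, $f'$ is nonconstant on $I$, so $f''$ is not identically $0$; as $f$ is $C^2$, $f''$ is continuous, and hence, after passing to a subinterval, we may assume $f''$ is everywhere nonzero on $I$ --- equivalently, that $f'$ is strictly monotone on $I$.

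Next I would bring in the two-parameter family of plane curves obtained from the graph of $f$ by horizontal and vertical translations: for $(a,b)\in\R^2$ set $C_{a,b}:=\{(x,f(x-b)+a):x-b\in I\}$. The relation ``$(x,y)\in C_{a,b}$'' is definable, and --- this is the crucial point --- because $f'$ is strictly monotone on $I$, these curves carry the incidence structure of an affine plane of lines: any two distinct curves $C_{a,b}$ and $C_{a',b'}$ meet in at most one point (when $b\ne b'$ the function $x\mapsto f(x-b)-f(x-b')$ is strictly monotone, hence takes the value $a'-a$ at most once), and any two sufficiently close points with distinct first coordinates lie on a unique $C_{a,b}$. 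This is exactly where the $C^2$ hypothesis is essential: it forces $f'$ to be strictly monotone, whereas for a merely $C^1$ non-affine $f$ the translated graphs can meet each other arbitrarily often and no such structure survives --- which is why the $C^1$ analogue discussed later in the paper requires further hypotheses. (Equivalently: the definable function $D(x,y):=f(x+y)-f(x)-f(y)$ reproduces, up to an additive constant and higher-order error, the product $f''(0)\,xy$ near the origin, so that the curvature of $f$ already manifests as an approximate multiplication.)

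The substantive step --- and the one I expect to be the main obstacle --- is to coordinatize this definable affine-plane-like structure so as to obtain operations $\oplus,\otimes$, definable in $\mathcal R$, on some interval $L$ with $(L,<,\oplus,\otimes)$ an ordered field whose order is the restriction of the real order. This is the Marker--Peterzil--Pillay construction. Its delicacy is that the only algebraic operation available in the ambient structure is $+$, so there are no definable dilations $x\mapsto\lambda x$ with which to ``straighten'' the net of curves --- indeed $f'$ itself need not be $\mathcal R$-definable --- and the coordinatization, together with the verification that the resulting multiplication is compatible with the ambient order, has to be carried out by arguments internal to $\mathcal R$.

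It remains to check that an ordered field $(L,<,\oplus,\otimes)$ definable in $\mathcal R$, with $<$ the restriction of the real order, is isomorphic to $(\R,<,+,\cdot)$. The operations $\oplus,\otimes$ are continuous (as the paper observes), so for $a>_\oplus 0_\oplus$ the $n$-fold sums $a\oplus\dots\oplus a$ are strictly increasing in $\R$ and stay in the bounded interval $L$; they therefore converge to some $b\in\R$ with $\inf L<b\le\sup L$, and if $b<\sup L$ then $b\in L$ and continuity of $\oplus$ gives $b\oplus a=b$, i.e.\ $a=0_\oplus$, a contradiction --- so $b=\sup L$ and the field is Archimedean. Any subset of $L$ bounded above in the field is bounded above in $\R$ by an element of $L$, so its real supremum lies in $L$ and is a field supremum; hence the field is Dedekind complete. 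A Dedekind complete Archimedean ordered field is $(\R,<,+,\cdot)$, so $\mathcal R$ is of field-type, transporting the field structure onto a prescribed bounded open interval along a definable order-isomorphism if one is wanted (such an isomorphism is available once $\otimes$ is in hand).
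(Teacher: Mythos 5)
Your reduction to the case where $f'$ is strictly monotone on a subinterval is correct and matches the paper's case (I), and your closing argument (a definable ordered field on a bounded interval whose order restricts the real order is Archimedean and Dedekind complete, hence isomorphic to $(\R,<,+,\cdot)$) is sound. But the heart of the proof is missing: you explicitly identify the construction of definable operations $\oplus,\otimes$ as ``the substantive step'' and ``the main obstacle,'' and then you do not carry it out, deferring instead to ``the Marker--Peterzil--Pillay construction.'' Since Fact~\ref{fact:c2} \emph{is} that construction (adapted to expansions of $(\R,<,+)$ rather than o-minimal structures), this is not a proof but a statement of intent. The incidence-geometric setup you describe (translates $C_{a,b}$ of the graph meeting pairwise in at most one point) is also not the route the paper takes, and coordinatizing such a plane definably in a structure with only $+$ is precisely the nontrivial content you would still owe.

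What the paper actually does, via Theorem~\ref{thm:field-type}, is more direct. After normalizing so that $f'(a)=0$, $f'(b)=1$ and $0<f'<1$ in between, the key technical point is Lemma~\ref{claim1}: for a definable family of germs $g_x$ with $g_x(0)=0$, the relation $g'_x(0)<g'_y(0)$ is definable, because it is equivalent to the existence of $z$ with $g_x(\epsilon)+[f(z+\epsilon)-f(z)]<g_y(\epsilon)$ for small $\epsilon$ --- the translates of $f$ near $a$ supply arbitrarily small positive ``test slopes,'' which is exactly the calibration that survives without definable dilations. One then parametrizes all real slopes by a definable set $F$ (built from $E=\{x: f'(y)<f'(x)\text{ for all }y<x\}$, its reflection, and compositional inverses), and defines $x\oplus y$ and $x\otimes y$ by requiring $h'_{x\oplus y}(0)=(h_x+h_y)'(0)$ and $h'_{x\otimes y}(0)=(h_x\circ h_y)'(0)$; addition of germs adds slopes and composition multiplies them, so $x\mapsto h'_x(0)$ is an isomorphism with $(\R,<,+,\cdot)$ by construction, making your Archimedean/completeness step unnecessary. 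In case (I) the set $F$ is an interval and Lemma~\ref{prop:interpret}(1) finishes. Until you supply an argument at the level of Lemma~\ref{claim1} and the $\oplus,\otimes$ definitions, the proposal has a genuine gap exactly where the theorem's content lies.
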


\noindent Their proof is only written to cover the case when $\Cal R$ is o-minimal, but goes through in general (see our proof of Theorem F below). We first prove the one-variable case of Theorem B, apply this to prove Theorem A, and then apply Theorem A to prove the multivariable case of Theorem B. \newline

\noindent We believe that the study of type A expansions is the ultimate generalization of o-minimality in the setting of expansions of $(\R,<,+)$. Introduced by Miller and Speissegger \cite{opencore}, the \textbf{open core} $\Cal R^\circ$ of $\Cal R$ is the expansion of $(\R,<)$ by all open $\Cal R$-definable subsets of all $\R^n$. 
We hope to eventually show that if $\Cal R$ is type A, then $\Cal R^\circ$-definable sets and functions behave in a similar fashion to those definable in o-minimal expansions.
At present we are confined to the collection of $\DSig$ sets. 

\subsection{Linearity of type B expansions} We now discuss the case when $\Cal R$ admits a dense $\omega$-order.
The key result from \cite{discrete} can be restated as follows\footnote{To prove Fact \ref{thm:H}, the reader can either easily redo the proof of \cite[Theorem 1.1]{discrete} or simply apply \cite[Theorem C]{FHW-Compact}.}.
\begin{fact}
\label{thm:H}
Suppose $\Cal R$ expands $(\R,<,+,\cdot)$.
Then there is a dense $\omega$-order if and only if $\Cal R$ defines the set of integers.
\end{fact}

\noindent We immediately obtain the following corollary of Fact~\ref{thm:H}. 

\begin{fact}
\label{thm:H1}
Suppose $\Cal R$ admits a dense $\omega$-order.
Then $\Cal R$ is field-type if and only if $\Cal R$ is type C.
\end{fact}

\noindent Thus studying the linearity of expansions that admit a dense $\omega$-order and are not field-type, reduces to studying the linearity of type B expansions. \newline
 
\noindent  To capture this linearity, we introduce the notion of a weak pole. Recall that a \textbf{pole} is a continuous surjection from a bounded interval to an unbounded interval.
A \textbf{weak pole} is a definable family $\{ h_{d} : d \in E\}$ of continuous maps $h_{d} : [0,d] \to \R$ such that
\begin{itemize}
\item[(i)] $E\subseteq \R_{>0}$ is  closed in $\R_{>0}$ and $(0,\epsilon) \cap E \neq \emptyset$ for all $\epsilon >0$,
\item[(ii)] there is $\delta>0$  such that $[0,\delta] \subseteq h_d([0,d])$ for all $d \in E$.
\end{itemize}
It is easy to see that if $\Cal R$ is field-type, then $\Cal R$ admits a weak pole. Our first result is the following strengthening of Fact \ref{thm:H1}.

\begin{thmC}
\label{thm:weak-pole-main}
Suppose $\Cal R$ admits a dense $\omega$-order.
Then $\Cal R$ is type C if and only if it defines  weak pole.
\end{thmC}

\noindent
Thus a type B expansion cannot define a weak pole. Structures that do not define weak poles, independently of whether they define a dense $\omega$-order, exhibit linear behavior.

\begin{thmD}
\label{thm:weak-pole-conseq}
Suppose $\Cal R$ does not admit a weak pole.
Then 
\begin{enumerate}
    \item If $W \subseteq \R^m$ is open and bounded, then every continuous definable function $f : W \to \R^n$ is bounded.
    \item Every continuous definable function $\R_{>0} \to \R$ is bounded above by an affine function.
    \item Every definable family of linear functions $[0,1] \to \R$ has only finitely many distinct elements.
\end{enumerate}
\end{thmD}

\begin{figure}[t]
\begin{tikzpicture}[every fit/.style={inner sep=0pt, outer sep=0pt, draw}]

\begin{scope}[yshift=1.5cm,y=1cm]
\draw (-2,4.5) -- (-1,3.4);
\node[right] at (-1.5,4.2) {dense $\omega$-order};
\node[right] at (-2.2,3.6) {field-};
\node[right] at (-2.2,3.2) {type};
\draw[]  (-1,-1.5) rectangle (0.5,3.4);
\draw[pattern=north east lines, pattern color=green]  (-1,-1.5) rectangle (0.5,1.0);
\draw[pattern=north east lines, pattern color=green]  
(0.5,-1.5) rectangle (4,0.5);
\draw[pattern=north east lines, pattern color=green]  
(4,-1.5) rectangle (9,1);

\node[right] at (1,3.6) {no};
\node[right] at (6,3.6) {yes};
\node[right] at (-1.8,2.2) {yes};
\node[right] at (-1.8,-0.2) {no};
\node [fit={(-1,-1.5) (4,1)}] {};
\node [fit={(4,-1.5) (9,1)}] {};
\node[right] at (5.8,2) {\textbf{type C}};
\node[right] at (-0.2,2.8) {\textbf{type A, field-type}};
\node[right] at (-0.2,-0.8) {\textbf{type A, affine}};
\node[right] at (5.8,-0.3) {\textbf{type B}};
\node[right, rotate=270] at (0,2) {\textbf{o-minimal}};

\end{scope}

\begin{scope}[yshift=2.5cm,y=0.8cm]
\node [fit={(-1,0) (4,3)}] {};
\node [fit={(4,0) (9,3)}] {};
\end{scope}
\end{tikzpicture}
\caption{The stripped area indicates expansions that do not define weak poles. An o-minimal structure defines a weak pole if and only if it is field-type.
} 
\end{figure}

\noindent Note that Theorem D shows that if $\Cal R$ admits a pole, then $\Cal R$ admits a weak pole. The converse does not always hold. The expansion of $(\R,<,+)$ by all bounded semialgebraic sets clearly admits a weak pole, but does not admit a pole by Pillay, Scowcroft and Steinhorn \cite{PSS}.
It follows from the quantifier elimination for $\rvec$ that $\rvec$ does not admit a weak pole.
Thus if $\Cal R$ is o-minimal, then $\Cal R$ admits a weak pole if and only if $\Cal R$ is field-type. However, there are type A expansions that admit weak poles and are not field-type.
Let $g : 2^{\Z} \times \R \to \R$ be given by $f(t,t') = tt'$.
Then $(\R,<,+,g)$ is a reduct of $(\R,<,+,\cdot,2^\Z)$ and is therefore type A by van den Dries~\cite{vdd-Powers2}.
Delon~\cite{Delon-powers} studied $(\R,<,+,g)$.
It can be deduced from \cite[Theorem 2]{Delon-powers} that $(\R,<,+,g)$ is not field-type.
For $t\in 2^{\Z}$, let $g_t : [0,1] \to \R$ be given by $g_t(t') = tt'$. It is easy to see that $\{ g_t : t \in 2^\Z \}$ is a weak pole. \newline

\noindent 
We also show that continuous definable functions $I \to \R$ in type B expansions satisfy another important property of affine functions.
We say $f: I\to \R$ is \textbf{repetitious} if for every open subinterval $J\subseteq I$
there are $\delta > 0$, $x,y \in J$ such that $\delta < y - x$ and
\[
f(x+\epsilon) - f(x) = f(y + \epsilon) - f(y) \quad \text{for all } 0 \leq \epsilon <  \delta
\]
Affine functions are obviously repetitious. However, a strictly convex function $I \to \R$ is not repetitious. 

\begin{thmF}
Suppose $\Cal R$ is type B.
Then every continuous definable function $I \to \R$ is repetitious.
\end{thmF}

\noindent Thus, if $\Cal R$ defines a continuous function $I \to \R$ that is not repetitious, then $\Cal R$ is field-type.
A special case is that if $\Cal R$ defines a strictly convex function $I \to \R$, then $\Cal R$ is field-type.
While most familiar examples of continuous nowhere locally affine functions are not repetitious, there are continuous repetitious functions that are nowhere locally affine.
If $f =(f_1,f_2): [0,1] \to [0,1]^2$ is the classical Hilbert space-filling curve, then one can check that $f_1$ and $f_2$ are repetitious. \newline

\noindent
It is natural to ask if a type B expansion can interpret an infinite field. By Abu Zaid, Gr\"adel, Kaiser, and Pakusa~\cite{ZGL} the structure $( \Cal{P}(\N), \N, \in, +1)$ does not admit a parameter-free interpretation of an infinite field.
Abu Zaid~\cite{Zaid-thesis} shows that $( \Cal{P}(\N), \N, \in, +1)$ does not interpret $(\R,<,+,\cdot)$, but it appears to be an open question whether $( \Cal{P}(\N), \N, \in, +1)$ interprets an infinite field.

\subsection{Open questions}

\noindent Despite the results above we do not know the full extent to which type B expansions are linear. In particular, we do not know the answer to the following question.
\begin{qst}
\label{qst:type-B}
Suppose that $\Cal R$ is type B. Let $f : I \to \R$ be continuous and definable. Is $f$ generically locally affine?
\end{qst}

\noindent Block Gorman et al. \cite{CRF} give a positive answer to this question for certain natural type B expansions, but the automata-theoretic argument in that paper is unlikely to extend to all type B expansions. By Theorem A this question has a positive answer for type A expansions that are not field-type. Thus a positive answer to Question \ref{qst:type-B} would show that all expansions that are not field-type, satisfy this strong form of linearity. Question~\ref{qst:type-B} can be stated in three non-trivially equivalent forms.

\begin{cor}
Let $f : I \to \R$ be continuous and definable. The following statements are equivalent:
\begin{enumerate}
\item If $\Cal R$ is type B, then $f$ is generically locally affine.
\item If $f$ is nowhere locally affine, then $\Cal R$ is field-type.
\item If $f$ is nowhere $C^k$ for some $k \geq 2$, then $\Cal R$ is type C.
\end{enumerate}
\end{cor}
\begin{proof} Theorem B and Fact~\ref{fact:c2} show that (1) implies (2). Since every nowhere $C^k$ function is also nowhere locally affine, the implication (2) $\Rightarrow$ (3) follows from Theorem B and Fact~\ref{thm:H1}. Since every $C^2$-function definable in a type B expansion is affine, we see that (3) implies (1).
\end{proof}

\noindent Note that statement (2) neither refers to type B or C nor to $\omega$-orders, but rather asks from what kind of objects we can recover a field. We regard this as one of the main open questions in the study of expansions of $(\R,<,+)$.\newline  


\noindent
Another natural question is whether Fact \ref{fact:c2} can be extended to $C^1$ functions. While we are unable to answer this question in general, we give a positive answer under an extremely weak model-theoretic assumption.

\begin{thmE}
Suppose $\mathcal{R}$ does not define an isomorphic copy of the standard model $( \Cal P(\mathbb{N}), \mathbb{N}, \in, +, \cdot)$ of second order arthimetic. If $\Cal R$ defines a non-affine $C^1$ function $f : I \to \mathbb{R}$, then $\Cal R$ is field-type.
\end{thmE}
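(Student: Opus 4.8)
The plan is this. The implication $(2)\Rightarrow(1)$ is immediate: if $\mathcal R$ is of field-type then by Theorem~A it defines a $C^2$ non-affine $I\to\R$, hence a $C^1$ non-affine one. For $(1)\Rightarrow(2)$ I would argue by contradiction: suppose $\mathcal R$ defines a $C^1$ non-affine $f:I\to\R$, is not of field-type, and does not define an isomorphic copy of $(\Cal P(\N),\N,\in,+,\cdot)$. By the trichotomy $\mathcal R$ is of type A, B, or C; type C is excluded since every type C expansion is of field-type. By Theorem~A a non-field-type expansion defines no continuous non-repetitious function, so $f$ is repetitious. Thus the theorem reduces to the following assertion, whose conclusion contradicts our hypothesis: \emph{if $\mathcal R$ defines a $C^1$, non-affine, repetitious $f:I\to\R$, then $\mathcal R$ defines an isomorphic copy of $(\Cal P(\N),\N,\in,+,\cdot)$.}

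The first step is an analytic reduction. Put $g=f'$; since $f$ is $C^1$, $g$ is definable and continuous, and it is non-constant because $f$ is non-affine. Differentiating the identity witnessing repetitiousness (legitimate because $f$ is $C^1$) shows that $g$ has the following \emph{translation-repetition} property: for every open $J\sub I$ there are $\delta>0$ and $x,y\in J$ with $\delta<y-x$ and $g(x+\epsilon)=g(y+\epsilon)$ for all $0\le\epsilon<\delta$. I claim a continuous $g$ with this property is locally constant on a dense open $W\sub I$; equivalently $f$ is affine, with some constant slope, on each connected component of $W$, and $\Gamma:=I\sm W$ is a nonempty definable perfect nowhere dense set. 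I expect this claim to come from a Baire-category argument: the closed set of points at which $g$ is not locally constant has empty interior, because on every subinterval the translation-repetition property feeds back — through a Euclidean-algorithm-style iteration on the "local periods" it produces — into arbitrarily small local periods, which continuity then forces to be genuine constancy. In particular the map assigning to each component of $W$ its slope is definable, the slope values are dense in the nondegenerate interval $g(I)$, and the endpoints of the components of $W$ are dense in $\Gamma$.

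The second step — the real content — is to convert this self-similar configuration into an interpretation of full second-order arithmetic. From $\Gamma$ together with $+$ one should definably produce a dense $\omega$-orderable set $D$ recording the self-similar "tree of components" of $W$ (for instance a suitably organised iterated difference set of the component endpoints, just as the middle-thirds Cantor set yields a dense $\omega$-orderable set in $(\R,<,+,C)$); so $\mathcal R$ is type B or type C, and type C would contradict non-field-type, so $\mathcal R$ is type B. By \cite[Theorem~A]{HW-Monadic}, $\mathcal R$ then defines an isomorphic copy of the standard model $(\Cal P(\N),\N,\in,+1)$ of the monadic second-order theory of one successor, with $\N$ realised by the $\omega$-order on $D$. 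Because $D$ was built to record depths and scales of the components of $W$, the metric data carried by $f$ — accessible through $+$ and the values of $f$ and $g$ — should let one define honest addition on this copy of $\N$ (which the monadic structure by itself cannot define): adding depths corresponds to composing the associated self-similarities of $f$. Once addition and set quantification are both available, multiplication on $\N$ is recovered by the usual primitive recursion, and we have defined an isomorphic copy of $(\Cal P(\N),\N,\in,+,\cdot)$, contradicting the hypothesis. Hence $\mathcal R$ is of field-type.

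The main obstacle is this second step: explicitly coding addition, and then multiplication, on $\N$ out of a generically locally affine, $C^1$, non-affine definable function, and verifying that all the decoding formulas are definable and that the resulting structure is genuinely isomorphic to the standard model. The preliminary moves — the trichotomy reduction, the appeal to Theorem~A, and the statement that $f'$ is locally constant on a dense open set — I expect to be comparatively routine, the first two being direct quotations of results already in hand.
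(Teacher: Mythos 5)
Your direction $(2)\Rightarrow(1)$ is fine. For $(1)\Rightarrow(2)$, however, your route diverges from the paper's at the outset and breaks down at both of its main steps. The paper does not pass through Theorem A or repetitiousness at all: Theorem E is read off from Theorem~\ref{thm:field-type}, which, for an arbitrary definable non-affine $C^1$ function $f$, splits into the cases ``$f'$ is strictly monotone on some subinterval'' and ``$f'$ is strictly monotone on no subinterval'' and in \emph{both} cases builds a definable set $F$ with operations $\oplus,\otimes$ making $(F,<,\oplus,\otimes)$ isomorphic to $(\mathbb{R},<,+,\cdot)$. Here $F$ parametrizes the slopes $f'(x)$ (via the set $E$ of points where $f'$ attains a record value), $\oplus$ and $\otimes$ come from addition and composition of the germs $t\mapsto f(x+t)-f(x)$, and everything rests on Lemma~\ref{claim1}: after normalizing so that $f'(a)=0<f'(t)$ for $t>a$, the relation $f'(x)<f'(y)$ is definable \emph{without division}, by comparing $f(x+\epsilon)-f(x)+\bigl[f(z+\epsilon)-f(z)\bigr]$ with $f(y+\epsilon)-f(y)$ for small $\epsilon$ and a suitable $z$ of small positive slope. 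In the first case $F$ is an interval, giving field-type; in the second, $F$ is nowhere dense and Lemma~\ref{prop:interpret} upgrades the abstract field to a definable copy of $(\mathbb{R},<,+,\cdot,\mathbb{Z})$, i.e.\ of second-order arithmetic. Nothing resembling this appears in your sketch: your Step 2 (``adding depths corresponds to composing self-similarities'') is a hope rather than a construction, and it is exactly where the content of the theorem lives.

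There is also a concrete error earlier. Your Step 1 asserts that a continuous $g$ with exact translation-repetitions in every subinterval (for every $J$ there are $x<y$ in $J$ and $\delta<y-x$ with $g(x+\epsilon)=g(y+\epsilon)$ for all $0\le\epsilon<\delta$) must be locally constant on a dense open set. This is false as a statement of analysis. A repetition is not a period: the two windows $[x,x+\delta)$ and $[y,y+\delta)$ are disjoint, so two such coincidences do not feed into any Euclidean-algorithm iteration producing smaller ones. Sliding-window substitution functions are counterexamples: take $r\ge 3$ and let $g$ send a point with base-$r$ digits $(a_n)$ to the point whose $n$-th digit is $1$ exactly when $a_n=a_{n+1}=1$; this $g$ is continuous and nowhere locally constant, yet changing a single input digit from $0$ to $2$ leaves the output unchanged, which produces, inside every subinterval, two disjoint windows on which $g$ coincides after translation. (The paper proves ``repetitious $\Rightarrow$ generically locally affine'' only under the type A hypothesis, via Theorem C -- Lemma~\ref{lem:typeArepeat} -- and that hypothesis is precisely what is unavailable in the type B case you are trying to settle; by Theorem D \emph{every} continuous definable function in a type B structure is repetitious.) So you cannot reduce to the generically locally affine case: the possibility that $f'$ is nowhere strictly monotone yet nowhere locally constant is genuinely present, and the paper's slope-field construction is designed to handle it uniformly.
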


\noindent Thus every $C^1$ function $I \to \R$ definable in a type B structure with a decidable theory is affine.
This covers the examples of type B structures described above.

\subsection{Applications} We anticipate that applications of the results presented in this paper are numerous. In Section \ref{section:applications} we already collect the most immediate consequences of our work related to descriptive set theory and automata theory. While these results are interesting in their own right, we do not wish to further extend the introduction. We refer the reader to Section \ref{section:applications} for a precise description of these results.

\subsection*{Acknowledgements} We thank Samantha Xu for useful conversations on the topic of this paper, Kobi Peterzil for answering our questions related to \cite{MPP}, and Chris Miller and Michel Rigo for correspondence around Section \ref{section:applications}. We thank the anonymous referee for very helpful comments that improved the presentation of this paper.

\subsection*{Notations}
Let $X \subseteq \R^n$. We denote by $\cl (X)$ the closure of $X$, by $\Int(A)$ the interior of $X$, and by $\bd (X)$ the boundary $\cl(X) \setminus \Int(X)$ of $X$. Whenever $X \subseteq \R^{m+n}$ and $x\in \R^m$, then $X_x$ denotes the set $\{ y \in \R^n \ : \ (x,y) \in X\}$.
A \textbf{box} is a subset of $\R^k$ given as a product of $k$ nonempty open intervals.\newline

\noindent We always use $i,j,k,l,m,n,N$ for natural numbers and $r,s,t,\lambda,\epsilon,\delta$ for real numbers.
We let $ \|x\| := \max \{ |x_1|, \ldots, |x_n| \}$ be the $l_{\infty}$ norm of $x = (x_1,\ldots,x_n) \in \R^n$.

\section{Preliminaries}

\subsection{$D_\Sigma$ sets in type A expansions} 
A set $X\subseteq \R^n$ is $\boldsymbol{\DSig}$ if there is a definable family $\{ Y_{r,s} : r,s > 0\}$ of compact subsets of $\R^n$ such that $X = \bigcup_{r,s} Y_{r,s}$, and $Y_{r,s} \subseteq Y_{r',s'}$ if $r\leq r'$ and $s \geq s'$, for all $r,r',s,s'> 0$. The family $\{ Y_{r,s} : r,s > 0\}$ \textbf{witnessses} that $X$ is $\DSig$.\newline

\noindent 
Every $\DSig$ set is $F_\sigma$, and this might lead us to think of $\DSig$ sets as ``definably $F_\sigma$".
Note however that there can be definable $F_\sigma$ sets that are not $\DSig$.
For example Fact~\ref{thm:sbct} below shows that $\Q$ is not a $\DSig$ set in $(\R,<,+,\Q)$.
A function is $\DSig$ if its graph is $\DSig$.
 We say that a family $\{A_x : x \in \R^m\}$ of subsets of $\R^n$ is $\boldsymbol{\DSig}$ if the set of $(x,y) \in \R^m \times \R^n$ satisfying $y \in A_x$ is $\DSig$.

\begin{fact}[Dolich, Miller and Steinhorn \cite{DMS1}]
\label{fact:basicdsig}
Open and closed definable subsets of $\R^m$ are $D_\Sigma$, finite unions and finite intersections of $D_\Sigma$ sets are $D_\Sigma$, and the image or pre-image of a $D_\Sigma$ set under a continuous definable function is $D_\Sigma$.
In particular, a continuous definable function whose domain is either an open or closed subset of $\R^m$ is $\DSig$.
\end{fact}

\noindent
If $\Cal R$ is o-minimal, then every definable set is a boolean combination of closed definable sets by cell decomposition. So in this situation every definable set is $\DSig$. In general, the complement of a $\DSig$ set is not $\DSig$. In this paper, we will make extensive use of the \textbf{Strong Baire Category Theorem}, or \textbf{SBCT}, established in \cite{FHW-Compact}.

\begin{fact}[SBCT {\cite[Theorem D]{FHW-Compact}}]\label{thm:sbct}
Suppose $\Cal R$ is type A. Then every $\DSig$ subset of $\R^n$ either has interior or is nowhere dense.
\end{fact}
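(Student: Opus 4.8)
The plan is to prove the contrapositive: assuming some $\DSig$ set $X \sub \R^k$ is somewhere dense but has empty interior, I will exhibit a dense $\omega$-orderable set, contradicting that $\Cal R$ is type A. Fix an open box $B$ in which $X$ is dense. First I would normalise the witnessing data. If $\{Y_{r,s} : r,s > 0\}$ witnesses that $X$ is $\DSig$, put $Z_t := Y_{t, 1/t}$; then $\{Z_t : t > 0\}$ is an increasing definable family of compact sets with $\bigcup_{t > 0} Z_t = X$. Since $X$ has empty interior and each $Z_t$ is closed, each $Z_t$ is nowhere dense, and replacing $Z_t$ by $W_t := \bigl(\bigcap_{s > t} Z_s\bigr) \cap \cl(B)$ I may assume in addition that $W_t = \{\, x \in \cl(B) : \sigma(x) \leq t \,\}$, where $\sigma(x) := \inf\{\, t > 0 : x \in Z_t \,\} \in [0, \infty]$ is the definable entry-time function. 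Thus each $W_t$ is closed and nowhere dense, $\{\, x \in B : \sigma(x) < \infty \,\}$ coincides with $X \cap B$ and is therefore dense in $B$, and, being a countable union of nowhere dense sets, it is meager, hence has empty interior by the classical Baire category theorem in $\R^k$; in particular $X \cap B$ and its complement in $B$ are both dense in $B$.

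The core of the argument is to build, out of the single function $\sigma$, a definable set that is dense in a sub-box of $B$ and carries a definable well-ordering of order type $\omega$. The natural idea is a Cantor--Bendixson style stratification of $X \cap B$: peel off $W_{t_0}$ for $t_0 := \min_{x \in \cl(B)} \sigma(x)$ (attained, as $\sigma$ is lower semicontinuous, and finite, as $X$ is dense in $B$), then work separately on each of the countably many pieces of the dense open set $B \setminus W_{t_0}$, on which $\sigma$ takes only values $> t_0$, and iterate. If this can be organised into a countable stratification, the relative boundaries of the successive strata all lie in $X$, accumulate throughout $B$, and the stratification tree should order the set of all of them in type $\omega$. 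One also notes that the stratification cannot stabilise after finitely many steps --- else $X \cap B$ would be closed and nowhere dense yet dense in $B$ --- and must deal with stages contributing perfect sets of new points, which can be pushed into later stages.

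The hard part will be to carry out this stratification and enumeration \emph{definably}. A type A expansion need not define $\Z$ --- $(\R, <, +, \Q)$ is the standard example --- so one cannot iterate a definable construction ``$n$ times'' for $n \in \N$, and one must also cope with the fact that the successive minima of $\sigma$ over the nested pieces need not be attained; the recursion has to be encoded inside the order of $\R$ itself. What one really wants is a definable closed, discrete, unbounded set of stages $t_1 < t_2 < \cdots$ inside $\R$ --- say the successive values of $\min \sigma$ over the nested pieces, or the ``halving times'' of the distance function $t \mapsto \operatorname{dist}(x, Z_t)$ for a suitably chosen fixed $x$ --- together with a definable surjection of this set onto a dense subset of a sub-box of $B$. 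The order type inherited by such a stage set from $<$ on $\R$ is then $\omega$, and transporting it along the surjection produces the required dense $\omega$-orderable set. Arranging simultaneously that the stage set is closed and discrete, that the attached points are dense, and that the whole configuration is uniformly definable is precisely where the difficulty lies; granting it, the contradiction with type A is immediate.
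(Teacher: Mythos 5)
This statement is not proved in the paper at all: it is imported verbatim as Theorem D of \cite{FHW-Compact}, so there is no internal proof to compare against. Judged on its own terms, your proposal has a genuine gap.

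Your high-level strategy is the natural one --- assume a $\DSig$ set is dense in a box $B$ but has empty interior, and try to manufacture a dense $\omega$-orderable set to contradict type A --- and your normalisation of the witnessing family (passing to an increasing chain $Z_t$, the entry-time relation $\sigma(x)\leq t$, and the observation that $X\cap B$ is meager and co-dense in $B$) is correct. But everything after that is a wish list rather than a proof. The entire content of the theorem is the definable production of a dense set carrying a definable order of type $\omega$, and you explicitly defer exactly that: ``granting it, the contradiction with type A is immediate.'' Nothing is granted to you here; that construction \emph{is} the theorem.

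Moreover, the sketch of the deferred step has concrete problems. A Cantor--Bendixson style peeling of a closed (let alone $\FSig$) set does not in general terminate in $\omega$ steps; its length can be any countable ordinal, so a stratification ``organised into a countable stratification'' indexed by $\omega$ is not available, and you give no mechanism for replacing transfinite recursion by a first-order definable object --- which, as you yourself note, is obligatory since a type A structure such as $(\R,<,+,\Q)$ cannot simulate induction over $\N$. Likewise, the proposed ``stage sets'' (successive minima of $\sigma$ over nested pieces, or halving times of $t\mapsto\operatorname{dist}(x,Z_t)$) have no reason to be closed, discrete, and unbounded: the successive minima can accumulate at a finite value, and the infima need not be attained, as you concede. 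Finally, an $\omega$-orderable set in this paper's sense must carry a \emph{definable} ordering of type $\omega$; transporting the order of $\R$ along a ``definable surjection'' only yields this if the fibers and the enumeration are themselves uniformly definable, which is again the unaddressed crux. In short: the reduction is set up correctly, but the theorem's actual argument is missing.
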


\noindent Another result we use repeatedly is the following \textbf{$\DSig$-selection} result.

\begin{fact}[{\cite[Proposition 5.5]{FHW-Compact}}]\label{thm:selection}
Suppose $\Cal R$ is type A. Let $A \subseteq \R^{m+n}$ be $\DSig$ such that $\pi(A)$ has interior, where $\pi : \R^{m+n} \to \R^m$ is the projection onto the first $m$ coordinates. Then there is a definable open subset $V$ of $\R^m$ such that $V \subseteq \pi(A)$ and a continuous definable $f: V \to \R^n$ such that $(p, f(p)) \in A$ for all $p \in V$.
\end{fact}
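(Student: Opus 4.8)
Proof proposal for Fact~\ref{thm:selection} ($D_\Sigma$-selection).

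The plan is to reduce to the case $\ell=1$ by an obvious induction on $\ell$ (select the last coordinate first as a function of the first $k+\ell-1$, restrict to where the resulting graph still projects onto a set with interior, and recurse), so I will describe the proof for $\ell=1$. Write $A\subseteq\R^{k+1}$, fix a witnessing family $\{Y_{r,s}:r,s>0\}$ of compact sets with $A=\bigcup_{r,s}Y_{r,s}$, monotone in the stated sense. First I would shrink the base: since $\pi(A)$ has interior and $\pi(A)=\bigcup_{r,s}\pi(Y_{r,s})$ is a countable-type union of the closed definable sets $\pi(Y_{r,s})$, I want to find fixed $r_0,s_0$ so that $\pi(Y_{r_0,s_0})$ already has interior. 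This is exactly where I expect the main obstacle to lie, and where SBCT (Fact~\ref{thm:sbct}) enters: one shows that if \emph{no} single $\pi(Y_{r,s})$ had interior, then each would be nowhere dense, and one can extract from the monotone family a countable subfamily $\pi(Y_{r_n,s_n})$ whose union is $\pi(A)$; a Baire-category argument on the open set $\Int(\cl\pi(A))$ would then contradict that $\pi(A)$ has interior — but some care is needed because $\pi(A)$ itself need not be open, only a $D_\Sigma$ set with interior, so one really wants to apply SBCT to $\pi(A)$ directly to get that it has interior as a $D_\Sigma$ set, and then rerun Baire inside that interior. The cleanest route is: by SBCT, $\pi(A)$ has interior; replace $\pi(A)$ by a definable open box $B\subseteq\pi(A)$; then $B=\bigcup_{r,s}(\pi(Y_{r,s})\cap B)$ is a countable-type union of closed sets inside the Baire space $B$, so some $\pi(Y_{r_0,s_0})\cap B$ is somewhere dense, hence (again by SBCT applied to the $D_\Sigma$ set $\pi(Y_{r_0,s_0})$, or just because it is closed) has interior. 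Set $K:=Y_{r_0,s_0}$, a compact definable subset of $A$ with $\pi(K)$ having interior.

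Now the selection is a definable minimization against the compact fiber: let $V$ be a definable open subset of $\Int\pi(K)$, and for $p\in V$ define $f(p):=\min\{y:(p,y)\in K\}$. The minimum exists because $K_p$ is compact and nonempty for $p\in\pi(K)$, and $f$ is definable. Clearly $(p,f(p))\in K\subseteq A$ for all $p\in V$. It remains to check that $f$ is continuous, possibly after shrinking $V$. Here I would argue as follows: the graph of $f$ is the set of $(p,y)\in V\times\R$ with $(p,y)\in K$ and $(p,y')\notin K$ for all $y'<y$; since $K$ is closed this graph is a closed subset of $V\times\R$, and $f$ is bounded on compact subsets of $V$ (because $K$ is bounded), so $f$ has closed graph and is locally bounded, hence — by a standard fact, or by definable choice of an open set where the graph of a function with closed graph is continuous — $f$ is continuous on a dense definable open subset of $V$. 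Replacing $V$ by that subset completes the proof.

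The only genuinely delicate point is the extraction of a single compact piece whose projection has interior; everything after that is soft topology (compactness of fibers, closedness of the graph of the pointwise min, and the passage to a dense open set of continuity). I would therefore spend the bulk of the write-up on the SBCT-plus-Baire step, being careful to distinguish "has interior" from "is open" throughout, and to note that the family $\{\pi(Y_{r,s}\cap(B\times\R))\}$ is itself a definable family of compact sets so that its union $B$ is legitimately presented as a $D_\Sigma$-type union inside $B$. A subtlety worth flagging: the index set is $\{(r,s):r,s>0\}$ rather than $\N$, so the Baire argument must be phrased via a countable cofinal sequence in this directed set (e.g. $r_n=s_n^{-1}=n$), using monotonicity to see that this sequence already exhausts $A$ and hence $\pi(A)$.
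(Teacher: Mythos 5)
The paper does not actually prove this statement; it is imported as a black box from \cite{FHW-Compact} (Proposition 5.5), so your proposal can only be judged on its own terms. The first half of it is fine: extracting a single compact piece $K=Y_{r_0,s_0}$ with $\pi(K)$ of nonempty interior, via the cofinal sequence $Y_{n,1/n}$ and the classical Baire category theorem inside an open box $B\subseteq\Int\pi(A)$, is correct --- and, as you half-notice yourself, this step needs no SBCT at all, since ``$\pi(A)$ has interior'' is a hypothesis rather than something to be derived. Taking $f(p)=\min\{y:(p,y)\in K\}$ is also a reasonable choice of selection.

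The gap is in the continuity step, and it is genuine. The graph of $p\mapsto\min K_p$ is \emph{not} closed: take $K=\bigl([0,1]\times\{1\}\bigr)\cup\bigl([0,\tfrac12]\times\{0\}\bigr)$, so that $f\equiv 0$ on $[0,\tfrac12]$ and $f\equiv 1$ on $(\tfrac12,1]$; the points $(\tfrac12+\tfrac1n,1)$ lie on the graph and converge to $(\tfrac12,1)$, which does not, and $\tfrac12$ is an interior point of $\pi(K)$, so shrinking to $\Int\pi(K)$ does not help. (Had the graph really been closed, your fallback would also be beside the point: a locally bounded function with closed graph is continuous \emph{everywhere}, so there would be nothing left to shrink $V$ for.) What is true is that $p\mapsto\min K_p$ is lower semicontinuous, hence continuous on a comeager set; but the statement demands a definable \emph{open} $V$, and the complement of a meager set need not contain a dense open set. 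This is precisely where the type A hypothesis has to enter and where your argument is silent: the oscillation sets $F_\epsilon=\{p:\operatorname{osc}_f(p)\geq\epsilon\}$ form a definable increasing family of closed sets, each nowhere dense by semicontinuity plus classical Baire, so their union (intersected with a compact box) is a $\DSig$ set without interior, and SBCT (Fact~\ref{thm:sbct}) is what upgrades ``without interior'' to ``nowhere dense'' and yields a definable dense open $V$ in its complement. That your write-up never uses type A in any essential way should have been a warning sign. A secondary flaw: your induction on $\ell$ selects the last coordinate first, which would require the projection of $A$ onto the first $k+\ell-1$ coordinates to have interior --- not guaranteed (consider $A=U\times\{0\}^{\ell-1}\times\R$ with $U$ open in $\R^k$). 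Run the induction the other way: project away the last coordinate, apply the inductive hypothesis to $\rho(A)$, and then apply the $\ell=1$ case to the pullback of $A$ over the graph of the resulting selection, which is $\DSig$ by Fact~\ref{fact:basicdsig}.
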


\noindent Theorem~\ref{thm:dsig-cont} is an easy consequence of Fact~\ref{thm:selection}.
We leave the details to the reader.

\begin{thm}
\label{thm:dsig-cont}
Suppose $\Cal R$ is type A. Let $U \subseteq \R^m$ be definable open and let $f : U \to \R^n$ be $\DSig$.
Then $f$ is generically continuous.
\end{thm}

\noindent For the continuous functions in type A structures the following weak monotonicity theorem for type A structures.
\begin{fact}[{\cite[Theorem 4.3]{HW-Monadic} and \cite[Fact 3.3]{FHW-Compact}}]\label{fact:weakmonot}
Suppose $\Cal R$ is type A. Let $Z\subseteq \R^n$ be definable and let $(f_z:\mathbb{R}\to \mathbb{R})_{z\in Z}$ be a definable family of continuous functions.  Then there is a definable family $(U_z)_{z\in Z}$ of open dense subsets of $\mathbb{R}$ such that for every $z\in Z$ the function $f_z$ is strictly increasing, strictly decreasing, or constant on each connected component of $U_z$.
\end{fact}

\noindent The uniformity in the statement of the weak monotonicity theorems is not explicit in the literature, but an inspection of the proof of \cite[Theorem 4.3]{HW-Monadic} shows that the definable open set $U$ is clearly constructed uniformly in the parameters defining $f$.\newline

\noindent In a few place throughout this paper we will refer to the dimension of a $\DSig$ set in a type A expansion. It is necessary to explain what dimension we refer to. Given $X \subseteq \R^n$ we let $\dim(X)$ be the topological dimension of $X$.
\textbf{Topological dimension} here refers to either small inductive dimension, large inductive dimension, or Lebesgue covering dimension. These three dimensions coincide on all subsets of $\R^n$ (see Engelking \cite{Engelking} for details and definitions). Model-theorists usually consider as a dimension of a subset $X$ of $\R^n$ the maximal $k$ for which there is a coordinate projection $\rho : \R^n \to \R^k$ such that $\rho(X)$ has nonempty interior. In \cite{FHW-Compact} this is called the \textbf{naive dimension} of $X$. In general, this naive dimension is not well-behaved for arbitrary subsets of $\R^n$ and does not equal the topological dimension. However, for $\DSig$ sets these notions of dimension coincide.

\begin{fact}[{\cite[Proposition 5.7, Theorem F]{FHW-Compact}}]\label{thm:factsaboutdsig}
Suppose $\Cal R$ is type A. Let $X \subseteq \R^n$ be $D_\Sigma$.
Then $\dim(X)$ is equal to the maximal $k$ for which there is a coordinate projection $\rho : \R^n \to \R^k$ such that $\rho(X)$ has nonempty interior. Moreover,
$\dim \cl(X) = \dim(X)$.
\end{fact}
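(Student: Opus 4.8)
The plan is to reduce both assertions to a single statement: that for every $D_\Sigma$ set $X \subseteq \R^n$ (with $\Cal R$ type A) the topological dimension $\dim X$ agrees with the \emph{naive dimension} $\ldim X$, i.e.\ the largest $k$ such that some coordinate projection $\rho \colon \R^n \to \R^k$ has $\rho(X)$ with nonempty interior. One inequality is soft and needs no tameness: if $\rho(X)$ has nonempty interior, then $\rho(X)$ is $D_\Sigma$ (Fact~\ref{fact:basicdsig}) and $D_\Sigma$-selection (Fact~\ref{thm:selection}) produces a definable open $V \subseteq \rho(X)$ and a continuous definable section $g \colon V \to \R^{n-k}$ with $\gr(g) \subseteq X$; since $\gr(g)$ is homeomorphic to the open set $V \subseteq \R^k$ and $\dim$ is monotone, $\dim X \ge \dim \gr(g) = k$, so $\ldim X \le \dim X$.

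Granting the reverse inequality $\dim X \le \ldim X$, the statement $\dim \cl(X) = \dim X$ follows quickly. Indeed $\cl(X)$ is closed, hence $D_\Sigma$, so $\dim \cl(X) = \ldim \cl(X)$ and $\dim X = \ldim X$; and $\ldim \cl(X) = \ldim X$ because a coordinate projection $\rho$ has $\rho(\cl X)$ with nonempty interior iff $\cl(\rho(X)) \supseteq \rho(\cl X)$ has nonempty interior iff $\rho(X)$ is somewhere dense iff --- by SBCT (Fact~\ref{thm:sbct}), since $\rho(X)$ is $D_\Sigma$ --- $\rho(X)$ has nonempty interior. So everything rests on $\dim X \le \ldim X$.

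This I would prove by induction on $n$. If $X$ has nonempty interior there is nothing to prove, as then $\ldim X = n$. Otherwise SBCT forces $X$ to be nowhere dense, so $\cl(X)$ is closed with empty interior, whence $\dim X \le \dim \cl(X) \le n-1$ by classical dimension theory (a closed subset of $\R^n$ of full dimension has interior; see \cite{Engelking}). Extracting from a witnessing family a cofinal sequence, write $X = \bigcup_m K_m$ with $K_m$ compact and increasing, so by the countable closed sum theorem $\dim X = \sup_m \dim K_m$, and it suffices to bound each $\dim K_m$. Let $\pi \colon \R^n \to \R^{n-1}$ be the last-coordinate projection; each fiber $X_p = \{ t \in \R : (p,t) \in X\}$ is $D_\Sigma$, so by SBCT in $\R$ it either has nonempty interior or is nowhere dense and hence $0$-dimensional. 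Over the part of $X$ whose fibers are $0$-dimensional, the restriction of $\pi$ to each $K_m$ is a closed map with $0$-dimensional fibers, so the Hurewicz dimension-raising bound gives dimension at most $\dim \pi(X) \le \ldim \pi(X) \le \ldim X$, using the inductive hypothesis on the $D_\Sigma$ set $\pi(X) \subseteq \R^{n-1}$ together with the fact that a coordinate projection of $\pi(X)$ is one of $X$. On the remaining part --- over $P := \{ p : X_p \text{ has nonempty interior}\}$, which one checks is itself $D_\Sigma$ --- a further application of $D_\Sigma$-selection peels off a ``thick fiber'' direction and reduces the dimension count to $P \subseteq \R^{n-1}$, again handled by induction. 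The main obstacle is exactly this last step: organizing the decomposition of $X$ along $P$ so that each piece is genuinely $D_\Sigma$ (so that SBCT and $D_\Sigma$-selection remain available) and so that the strata peeled off are homeomorphic to open or $D_\Sigma$ subsets of $\R^{n-1}$ whose dimension the inductive hypothesis controls; once that bookkeeping is in place, the remaining ingredients are only SBCT, $D_\Sigma$-selection, and standard facts from dimension theory (monotonicity, the countable closed sum theorem, the Hurewicz formula, and the behaviour of closed sets of full dimension).
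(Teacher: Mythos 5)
First, a point of order: the paper does not prove this statement at all. It is recorded as a \emph{Fact} imported from \cite[Proposition 5.7 and Theorem E]{FHW-Compact}, so there is no in-paper argument to compare yours against; what follows assesses your sketch on its own terms. Your two reductions are correct and efficient: the inequality $\ldim X \le \dim X$ via $\DSig$-selection (after permuting coordinates) plus monotonicity of topological dimension is fine, and deriving $\dim \cl(X) = \dim(X)$ from $\dim = \ldim$ together with SBCT applied to $\rho(X)$ is exactly the right way to get the ``moreover'' clause. The classical ingredients you invoke (a subset of $\R^n$ of dimension $n$ has interior, the countable closed sum theorem, the Hurewicz bound for closed maps with zero-dimensional fibres, the product inequality) are all correctly stated where you use them.

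The gap is in the step you yourself flag, and it is a genuine gap rather than bookkeeping, for two reasons. First, to get $\ldim P + 1 \le \ldim X$ you must convert ``every fibre $X_p$, $p \in P$, has interior'' into a uniform statement: fibrewise interior does not yield interior of $(\rho' \times \mathrm{id})(X)$ without a stratification such as $P = \bigcup_{m,\, a<b \in \Q} P_{m,a,b}$ with $P_{m,a,b} = \{ p : [a,b] \subseteq (Y_{m,1/m})_p \}$ closed and definable, followed by Baire category and SBCT applied to $\rho'(P_{m,a,b})$; this is a missing idea, not a routine check. Second, and more seriously, your final assembly does not close: you split $X$ into the part over $P$ and the part over $\R^{n-1} \setminus P$ and bound the dimension of each by $\ldim X$, but the complement of a $\DSig$ set need not be $\DSig$ or even $F_\sigma$, so the second piece is in general only a $G_\delta$ in $X$, while the countable sum theorem requires every piece of the cover to be $F_\sigma$ in $X$. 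Without that hypothesis, $\dim(A \cup B) \le \max(\dim A, \dim B)$ simply fails: $\R = \Q \cup (\R \setminus \Q)$ with $\Q$ an $F_\sigma$ of dimension $0$ is the standard counterexample. So the induction scheme as described does not yield $\dim X \le \ldim X$; one has to reorganize the argument into a cover of $X$ by countably many pieces each closed (or $F_\sigma$) in $X$ with controlled dimension, and that is precisely the content of the cited results of \cite{FHW-Compact}. If you need the statement, cite it; if you want to reprove it, this is where the real work lives.
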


\noindent Corollary~\ref{cor:to-selection} follows from $\DSig$-selection and Fact~\ref{thm:factsaboutdsig}.

\begin{cor}
\label{cor:to-selection}
Suppose $\Cal R$ is type A.
Suppose $X \subseteq \R^n$ is $\DSig$ and $\dim X \geq d$.
Then there is a nonempty definable open $U \subseteq \R^d$ and a continuous definable injection $f : U \to X$.
\end{cor}

\noindent We also make use of the following.

\begin{fact}[{\cite[Theorem F]{FHW-Compact}}]
\label{fact:dim-bijection}
Suppose $\Cal R$ is type A.
Let $X \subseteq \R^n$ be $\DSig$ and suppose $f : X \to \R^m$ is definable and continuous.
Then $\dim f(X) \leq \dim X$ and $\dim f(X) = \dim X$ when $f$ is a bijection.
\end{fact}

\noindent Finally, we also need the following consequence of the Kuratowski-Ulam theorem (see~\cite[Theorem 8.41]{kechris}).

\begin{fact}
\label{fact:KU}
Suppose $U \subseteq \R^m, V \subseteq \R^n$ are nonempty and open.
Let $A$ be an $F_\sigma$ subset of $U \times V$.
If $A_x$ contains a dense open subset of $V$ for all $x \in U$, then $A$ contains a dense open subset of $U \times V$.
\end{fact}

\subsection{Expansions that define dense $\omega$-orders.} In this section we recall a few fundamental results about expansions that are not type A. The following is a minor modification of Hieronymi and Tychonievich \cite[Theorem A]{HT}.

 \begin{fact}[{\cite[Proposition 3.8]{FHW-Compact}}]\label{prop:htplus}
If $\Cal R$ defines a linear order $(D,\prec)$, an open interval $I\subseteq\R$, and a function $g\colon\R^3\times D\rightarrow D$ such that
\begin{itemize}
\item[(i)] $(D,\prec)$ has order type $\omega$ and $D$ is dense in $I$,
\item[(ii)] for every $a,b\in U$ and $e,d\in D$ with $a<b$ and $e\preceq d$,
\[\{c\in\R:g(c,a,b,d)=e\}\cap(a,b)\mbox{ has nonempty interior,}\]
\end{itemize}
then $\Cal R$ is type C.
\end{fact}

\noindent By \cite[Theorem B]{HW-Monadic} every expansion of $(\R,<,+)$ that defines a dense $\omega$-orderable set, interprets the monadic second order theory of one successor. Fact~\ref{prop:htplus} roughly states that we can not expand $\Cal R$ too much without it becoming type C, once we have this $\omega$-orderable set. This should be compared to a similar result of Elgot and Rabin \cite[Theorem 1]{ElgotRabin} on decidable expansions of the monadic second theory of one successor (see \cite[Section 3.2]{BH-Cantor} for a more detailed discussion). The following corollary to Fact \ref{prop:htplus} is often easier to apply.

\begin{fact}[{\cite[Lemma 3.7]{BH-Cantor}}] \label{fact:diff}
If there exist two dense $\omega$-orderable subsets $C$ and $D$ of $[0,1]$ satisfying $(C-C)\cap (D-D)=\{0\}$, then $\mathcal{R}$ is type C.
\end{fact}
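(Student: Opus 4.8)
The plan is to deduce Fact~\ref{fact:diff} from Fact~\ref{prop:htplus}. First I would pass to a normal form: replacing $C$ and $D$ by $C\cap J$ and $D\cap J$ for a suitable open interval $J$ preserves $\omega$-orderability and the hypothesis $(C-C)\cap(D-D)=\{0\}$ (taking a subset only shrinks the difference sets), so after also translating we may assume $C$ and $D$ are both dense in $I:=(0,1)$, and we fix definable well-orderings $\prec_C$ of $C$ and $\prec_D$ of $D$, each of order type $\omega$. The key consequence of the hypothesis is that the addition map $C\times D\to\R$, $(p,q)\mapsto p+q$, is injective: if $p+q=p'+q'$ then $p-p'=q'-q$ lies in $(C-C)\cap(D-D)=\{0\}$. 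Hence $E:=C+D$ is a definable set, dense in $(0,2)$ and in particular in $I$, carrying definable coordinate projections $\pi_C\colon E\to C$ and $\pi_D\colon E\to D$. I would then apply Fact~\ref{prop:htplus} with the ordered set $(D,\prec_D)$, the interval $I$, and a function $g\colon\R^3\times D\to D$ yet to be constructed; condition (i) is immediate from the choice of $\prec_D$ and the density of $D$ in $I$.

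The construction of $g$ is the heart of the matter. Given $a<b$ in $I$, the density of $C$ lets one subdivide $(a,b)$ definably and uniformly into a discrete increasing sequence of subintervals: put $\tilde p_0:=\prec_C$-least point of $C\cap(a,b)$, $\tilde p_{k+1}:=\prec_C$-least point of $C\cap(\tilde p_k,b)$, and $J_k:=(\tilde p_{k-1},\tilde p_k)$ with $\tilde p_{-1}:=a$; because $C$ is dense, this sequence is infinite and tends to $b$, so the $J_k$ cover $(a,b)$ up to a countable set, and the set $\{\tilde p_k:k\ge 0\}$ is captured by the definable condition ``$p\in C\cap(a,b)$ and $p\prec_C p'$ for every $p'\in C$ with $p<p'<b$''. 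On $J_k$ the value $g(c,a,b,d)$ should be a constant $e_k\in D$ depending only on $\tilde p_k$ and $d$ (for $c\in J_k$ one recovers $\tilde p_k$ as the least point of this set exceeding $c$), and the whole point is to choose the assignment $\tilde p_k\mapsto e_k$ so that it is definable and its image contains the initial segment $\{e\in D:e\preceq_D d\}$ of $(D,\prec_D)$. For this I would exploit the arithmetic made available by the bijection $C\times D\to E$: the point $\tilde p_k$ together with the finitely many $e$ with $e\preceq_D d$ can be combined through $+$ and the projections $\pi_C,\pi_D$ into a single definable rule that, as $k$ grows, drives the label $e_k$ through the whole segment $\{e:e\preceq_D d\}$, with injectivity of the pairing — that is, the hypothesis — being exactly what makes such a rule well defined and keeps distinct labels from collapsing. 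Once $g$ is so defined, for each $e\preceq_D d$ the set $\{c\in(a,b):g(c,a,b,d)=e\}$ contains some $J_k$, hence has nonempty interior, which is condition (ii); Fact~\ref{prop:htplus} then yields that $\mathcal R$ is type C.

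The main obstacle is the bracketed step: producing a definable function on the discrete sequence $(\tilde p_k)_k$ whose image exhausts the order-type-$\omega$ segment $\{e:e\preceq_D d\}$, uniformly in $a,b,d$. The naive idea — send $\tilde p_k$ to the ``$k$-th element of $D$'' — is unavailable, since $\mathcal R$ a priori has no way to compare the $\prec_C$-rank of $\tilde p_k$ with $\prec_D$-ranks, and a definable rank comparison between two $\omega$-ordered sets is already a strong arithmetic statement of the kind we are trying to produce. This is precisely where $(C-C)\cap(D-D)=\{0\}$ must be used: it lets one transport the finite structure $\{e:e\preceq_D d\}$ into $(a,b)$ along $C$ in a collision-free, injective manner, which is what keeps the fibers of $g$ from being merely dense (hence with empty interior) rather than genuinely containing intervals. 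Carrying this step out with care — and checking the various density statements about $E$, $C\cap(a,b)$, and their translates inside $(0,1)$ — is the only real work; the rest is bookkeeping.
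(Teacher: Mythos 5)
First, a remark on the comparison you asked for: the paper does not prove this statement at all --- it is imported verbatim as \cite[Lemma 3.7]{BH-Cantor} (and presented as an easier-to-apply corollary of Fact~\ref{prop:htplus}), so there is no in-paper proof to measure your argument against. Your overall strategy --- reduce to Fact~\ref{prop:htplus}, observe that $(C-C)\cap(D-D)=\{0\}$ makes $(p,q)\mapsto p+q$ injective on $C\times D$, and build $g$ so that it is locally constant on a sequence of subintervals of $(a,b)$ obtained from the $\prec_C$-records of $C\cap(a,b)$ --- is the right kind of strategy, and the pieces you do carry out (the normalization, the injectivity of addition, the definability of the record set $\{\tilde p_k\}$ and the fact that it increases to $b$) are correct. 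You have also correctly diagnosed the central difficulty: the fibers of $g$ must have nonempty \emph{interior}, so the naive $g(c,a,b,d)=\pi_D(\text{something}(c))$ with countable fibers is useless, and what is really needed is a definable assignment $\tilde p_k\mapsto e_k$ whose image exhausts the finite initial segment $\{e\in D: e\preceq d\}$, uniformly in $a,b,d$.

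The problem is that you never construct that assignment, and it is the entire content of the lemma. The sentence ``the point $\tilde p_k$ together with the finitely many $e$ with $e\preceq_D d$ can be combined through $+$ and the projections $\pi_C,\pi_D$ into a single definable rule that \ldots drives the label $e_k$ through the whole segment'' is a statement of intent, not an argument, and your closing paragraph concedes that carrying it out ``is the only real work.'' Two concrete issues show the gap is not mere bookkeeping. (a) Matching the $\prec_C$-rank of $\tilde p_k$ against $\prec_D$-ranks is, as you yourself note, exactly the kind of definable arithmetic one is trying to manufacture; the injectivity of $C\times D\to\R$ gives you a definable \emph{pairing}, but it does not by itself give a rank-respecting or initial-segment-surjective correspondence, and no mechanism in your sketch explains why the labels $e_k$ do not all collapse to, say, the $\prec_D$-least element. (b) Even the natural candidate labels are problematic uniformly in $(a,b)$: each translate $C+e$ with $e\preceq d$ is dense only in $(e,1+e)$, and for a short interval $(a,b)$ sitting near one end of $I$ these translates need not meet $(a,b)$ at all, so any rule that reads the label off membership of $c$ (or of $\tilde p_k$) in such translates must first solve a rescaling problem that $(\R,<,+)$ does not let you solve by dilation. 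Until the definable rule $\tilde p_k\mapsto e_k$ is written down and its surjectivity onto $\{e:e\preceq d\}$ verified, condition (ii) of Fact~\ref{prop:htplus} has not been established and the proof is incomplete.
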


\section{Defining a field}
In this section we explain how to recover a field from a non-affine function. In the case of a $C^2$ function our central argument has already appeared in \cite{MPP}. Our main contribution is the extension to $C^1$ functions, in particular statement (2) in the following theorem.

\begin{thm}\label{thm:field-type}
Let $f : I \to \mathbb{R}$ be definable, $C^1$, and non-affine.
\begin{enumerate}
\item If $f'$ is strictly increasing or strictly decreasing on some open subinterval of $I$, then $\Cal R$ is field-type.
\item If $f'$ is not strictly increasing or strictly decreasing on any open subinterval of $I$, then $\Cal R$ defines an isomorphic copy of $( \Cal P(\mathbb{N}), \mathbb{N}, \in, +, \cdot)$.
\end{enumerate}
In particular, if $f$ is $C^2$, then $\mathcal{R}$ is field-type.
\end{thm}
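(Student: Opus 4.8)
The plan is to extract, from a non-affine $C^1$ function $f\colon I\to\R$, a \emph{partial ordered field structure} on an interval, following the Rabinovich--Marker--Peterzil--Pillay scheme, but carried out with only one derivative available. The key objects are the family of ``secant-slope'' or ``translation'' maps attached to $f$. Concretely, after shrinking $I$ one may assume $f'$ is continuous and, by replacing $f$ with $f(x+a)-f(a)-f'(a)x$ type modifications, normalize so that $f(0)=0$, $f'(0)=0$, and $f$ is not affine near $0$. The basic idea is that the two-variable function $(x,t)\mapsto f(x+t)-f(x)$ (or equivalently the definable family of translates of the graph of $f$) has a nondegenerate ``curvature'' precisely because $f'$ is non-constant; one uses this to define, on a small interval, an operation $\otimes$ playing the role of multiplication, with $+$ (the ambient group operation, suitably rescaled) as addition. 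The point of splitting into cases (1) and (2) is exactly the dichotomy in how the non-constancy of $f'$ manifests: either $f'$ is locally strictly monotone somewhere, which gives the ``clean'' situation in which the Rabinovich/MPP construction runs essentially as in the $C^2$ case (one gets a genuine local ordered field isomorphic to $(\R,<,+,\cdot)$, i.e.\ field-type); or $f'$ is nowhere locally monotone, in which case $f'$ is a continuous nowhere-monotone definable function, and from such a pathological object one manufactures enough ``combinatorial complexity'' — via the oscillation of $f'$ and the resulting abundance of level sets and coincidence sets — to define a copy of second-order arithmetic $(\Cal P(\N),\N,\in,+,\cdot)$.

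For the final ``in particular'' statement — that if $f$ is $C^2$ then $\Cal R$ is of field-type — the strategy is simply to observe that a $C^2$ non-affine function $f$ has $f''\not\equiv 0$, hence $f''$ is nonzero on some open subinterval $J\subseteq I$, and there $f''$ has constant sign (shrinking $J$ if necessary, using continuity of $f''$), so $f'$ is strictly monotone on $J$. Thus case (1) of the theorem applies and $\Cal R$ is of field-type. So the $C^2$ conclusion is an immediate corollary of part (1); all the real work is in proving (1) and (2) for merely $C^1$ functions.

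First I would set up the reduction and normalization: shrink and affinely adjust so that $f(0)=0$, and arrange that $g(x,t):=f(x+t)-f(x)$ is defined for $x,t$ in a neighborhood of $0$; note $\partial_t g(x,0)=f'(x)$. In case (1), after shrinking I may assume $f'$ is, say, strictly increasing and continuous on $I$, hence a definable homeomorphism onto its image; this is the crucial gain that lets the construction proceed without $f''$, since $f'$ itself, rather than its derivative, is the ``scaling parameter''. I would then follow Section~3 of \cite{MPP} essentially verbatim in this setting: use translates and dilates of the graph of $f$ (dilations being available because $f'$ is invertible) to build definable $\oplus,\otimes$ on some interval with $(I',<,\oplus,\otimes)$ an ordered field, then invoke the classical fact that a definable-over-$\R$ ordered field structure on an interval whose order is the usual one is isomorphic to $(\R,<,+,\cdot)$; this is field-type. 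The main obstacle — and where the paper's genuine novelty lies — is case (2): showing that a $C^1$ non-affine $f$ whose derivative is nowhere locally monotone forces definability of second-order arithmetic. Here the plan is to exploit that $f'$ is then a nowhere-monotone continuous definable function, use it to produce (by looking at where $f'$ takes a given value, and how translates of the graph of $f$ intersect) two dense definable families of ``small'' sets with enough independence to code both addition and multiplication on $\N$ inside $\Cal P(\N)$ — the structure $(\Cal P(\N),\N,\in,+,\cdot)$. I expect this to require a careful combinatorial/descriptive-set-theoretic argument building a definable bijection between a definable dense $\omega$-orderable set and $\N$, and then definably recovering the graphs of $+$ and $\cdot$ from the oscillation pattern of $f'$; closing this step rigorously, rather than merely sketching it, is the part that I anticipate being hard and delicate.
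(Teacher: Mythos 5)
Your reduction of the ``in particular'' clause to case (1) is correct and matches the paper, and your general orientation for case (1) (a Rabinovich/MPP-style construction of $\oplus,\otimes$ from the family of translates $g_x(t)=f(x+t)-f(x)$) is the right one. But there are two genuine gaps. First, throughout the proposal you treat $f'$ as a definable function --- ``$f'$ is a definable homeomorphism onto its image'', ``$f'$ is a nowhere-monotone continuous definable function'' --- but in an expansion of $(\R,<,+)$ the derivative of a definable function is not a priori definable: there is no division, so $\lim_{\epsilon\to 0}(f(x+\epsilon)-f(x))/\epsilon$ is not a first-order expression. The entire construction hinges on a substitute for this, namely the paper's Lemma~\ref{claim1}: after normalizing so that $f'(a)=0$ and $f'>0$ on $(a,b]$, the relation $g'_x(0)<g'_y(0)$ becomes definable because it is equivalent to the existence of $z$ with $g_x(\epsilon)+[f(z+\epsilon)-f(z)]<g_y(\epsilon)$ for all small $\epsilon>0$ --- one uses $f$ itself to produce comparison functions of arbitrarily small positive derivative. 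Without this (or an equivalent device) you cannot define the relations $f'(x)<f'(y)$, nor the operations $h'_{x\oplus y}(0)=(h_x+h_y)'(0)$ and $h'_{x\otimes y}(0)=(h_x\circ h_y)'(0)$, even in the ``clean'' monotone case; so case (1) is not closed as written.

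Second, your plan for case (2) --- directly manufacturing a coding of $(\Cal P(\N),\N,\in,+,\cdot)$ from the oscillation pattern of $f'$ --- is not what is needed and, as you concede, is not an argument. The paper's point is that case (2) requires no new construction at all: the very same field $(F,<,\oplus,\otimes)\cong(\R,<,+,\cdot)$ is built in both cases, carried by the set $E$ of points $x$ where $f'(y)<f'(x)$ for all $y<x$ (extended by reciprocals and negatives to $F$). The dichotomy (1)/(2) is exactly the dichotomy of whether $F$ is an interval or has empty interior. In case (2), $F$ is a nowhere dense set supporting a definable ordered field isomorphic to $(\R,<,+,\cdot)$; the endpoints of its bounded complementary intervals form a definable $\omega$-orderable set that is dense in $F$ (Lemmas~\ref{lem:monoton} and~\ref{lem:endpoint}), and an expansion of the real field admitting a dense $\omega$-orderable set defines $\Z$ (Fact~\ref{fact:basicfieldtype}), so $(F,<,\oplus,\otimes)$ carries a definable copy of $\Z$ and hence of $(\R,<,+,\cdot,\Z)$, which is interdefinable with second-order arithmetic. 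This uniform treatment, via Lemma~\ref{prop:interpret}, is the missing idea; pursuing a bespoke combinatorial coding from the oscillation of $f'$ would both be far harder and require the undefinable object $f'$ as a starting point.
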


\noindent Theorem F follows. The conclusion of statement (2) in Theorem \ref{thm:field-type} does not rule out the possibility that $\Cal R$ is type A. Note that a structure defines an isomorphic copy of $( \Cal P(\mathbb{N}), \mathbb{N}, \in, +, \cdot)$ if and only if it defines an isomorphic copy of  $(\R,<,+,\cdot,\mathbb{Z})$. Friedman, Kurdyka, Miller, and Speissegger \cite{FKMS} gave an example of type A structure that defines an isomorphic copy of  $(\R,<,+,\cdot,\mathbb{Z})$.\newline

\noindent Before proving Theorem \ref{thm:field-type} we establish a few lemmas used in the proof. We fix one further notation: A \textbf{complementary interval} of $A \subseteq \R$ is a connected component of the complement of the closure of $A$.

\begin{lem}\label{lem:monoton}
Let $F \subseteq \R$ be such that $(F,<)$ is isomorphic to $(\R,<)$.
Then $F$ either has interior or is nowhere dense.
Furthermore, if $I$ is a bounded complementary interval of $F$, then either the left endpoint or the right endpoint of $I$ is in $F$.
\end{lem}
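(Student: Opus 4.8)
The plan is to argue purely order-theoretically. The only input is that an order isomorphism $(F,<)\cong(\R,<)$ forces $(F,<)$ to be Dedekind complete: every nonempty $A\subseteq F$ that is bounded above in $F$ has a least upper bound $\sup_F A$ lying in $F$. From this I isolate the single principle driving both halves of the lemma: $F$ admits no partition $F=A\cup B$ into nonempty sets with every element of $A$ below every element of $B$, with $A$ having no greatest element and $B$ no least element --- since $\sup_F A$ would then be forced to equal $\max A$ (if it lay in $A$) or $\min B$ (if it lay in $B$), neither of which exists. Call such a partition a \emph{gap}. The proof then reduces to showing that each of the two forbidden topological configurations of $F$ produces a gap.

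For the first assertion, assume $F$ has empty interior but, for contradiction, is not nowhere dense, so $\cl(F)\supseteq(c,d)$ for some $c<d$; thus $F$ is dense in $(c,d)$, and since $F$ has empty interior there is $p\in(c,d)\setminus F$. I take $A=F\cap(-\infty,p)$ and $B=F\cap(p,\infty)$, which partition $F$ as $p\notin F$ and are both nonempty since $F$ is dense in $(c,d)$ and $c<p<d$. Density near $p$ also rules out a greatest element of $A$ --- a greatest element $m<p$ would force $F\cap(m,p)=\emptyset$ --- and symmetrically rules out a least element of $B$. As every element of $B$ is an upper bound of $A$ in $F$, this is a gap, a contradiction; hence $F$ is nowhere dense.

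For the second assertion, let $I=(a,b)$ be a bounded complementary interval of $F$, so $a,b\in\cl(F)$ and $(a,b)\cap F=\emptyset$, and suppose toward a contradiction that neither $a$ nor $b$ lies in $F$. Then $F\cap[a,b]=\emptyset$, so $A:=F\cap(-\infty,a)$ and $B:=F\cap(b,\infty)$ partition $F$. Since $a\in\cl(F)$ but $F$ meets no point of $[a,b]$, points of $F$ accumulate at $a$ strictly from the left, so $A\neq\emptyset$ and, as before, $A$ has no greatest element; symmetrically $B\neq\emptyset$ and has no least element; and every element of $B$ bounds $A$. This is again a gap, the desired contradiction, so at least one endpoint of $I$ belongs to $F$.

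I do not expect a genuine obstacle. The only care needed is the bookkeeping inside each case --- confirming that the lower piece $A$ and the upper piece $B$ are nonempty and that the relevant extreme element is missing --- and in both cases this comes down to the density of $F$ at the offending point together with the assumption that that point is not in $F$. No definability or tameness hypothesis on $\mathcal{R}$ (in particular no appeal to SBCT) is required.
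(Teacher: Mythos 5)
Your proof is correct and rests on the same engine as the paper's: Dedekind completeness of $(F,<)$, inherited through the order isomorphism with $(\R,<)$. The paper computes the relevant suprema directly via $\iota$ (showing $\iota(\sup X)=t$ in the first part and that $\iota(\sup Y)$ is an endpoint of $I$ in the second), whereas you argue by contradiction through the equivalent no-gaps formulation; this is only a repackaging of the same idea.
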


\begin{proof}
Let $\iota : (\R,<) \to (F,<)$ be an isomorphism.
Let $J$ be an open interval.
We suppose that $F$ is dense in $J$ and show $J \subseteq F$. The first claim then follows.
Let $t \in J$ and $X := \{ x \in \R : \iota(x) < t\}$.
The density of $F$ in $J$ yields an $x \in \R$ satisfying $\iota(x) > t$. Thus $X$ is bounded from above.
Let $u$ be the supremum of $X$ in $\R$.
As $F$ is dense in $J$, we must have $\iota(u) = t$. Therefore $t \in F$.
We proceed to the second claim. Let $I$ be a bounded complementary interval of $F$.
The density of $(F,<)$ shows that $F$ contains at most one endpoint of $I$.
Let $z \in I$ and $Y := \{ x \in \R \ : \ \iota(x)<z\}$. As $I$ is a bounded complementary interval, there is an $x \in \R$ such that $\iota(x) > z$. Hence $Y$ is bounded above in $\R$.
Let $u\in \R$ be the supremum of $Y$. If $u \in Y$, then $\iota(u)$ is the left endpoint of $I$. If $u \notin Y$, then $\iota(u)$ is the right endpoint of $I$.
\end{proof}

\begin{lem}\label{lem:endpoint}
Let $A \subseteq \R$ be definable and bounded.
Then the set $D$ of endpoints of bounded complementary intervals of $A$ is $\omega$-orderable.
\end{lem}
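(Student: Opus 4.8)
The plan is to use the elementary structure theory of open subsets of $\R$: the set $\R\sm\cl(A)$ is a disjoint union of open intervals, namely its connected components, and boundedness of $A$ forces a packing estimate that lets me enumerate the bounded ones in order type $\omega$ by decreasing length. First I would dispose of the trivial case $A=\emptyset$, where $D=\emptyset$; so assume $A\neq\emptyset$, whence $\cl(A)$ is nonempty and compact, and fix $M>0$ with $\cl(A)\sub[-M,M]$.

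The crux is the following observation. If $(a,b)$ is a bounded complementary interval of $A$, i.e. a bounded connected component of $\R\sm\cl(A)$, then $a,b\in\cl(A)$ (otherwise, say $a\notin\cl(A)$, a slightly larger interval $(a-\delta,b)$ would lie in $\R\sm\cl(A)$, contradicting maximality of the component), so $(a,b)\sub[-M,M]$. Since distinct complementary intervals are pairwise disjoint, for each $n\geq 1$ at most $2Mn$ bounded complementary intervals can have length $\geq 1/n$. Hence, setting
\[ D_n := \{\, d\in D : d \text{ is an endpoint of a bounded complementary interval of } A \text{ of length }\geq 1/n \,\}, \]
each $D_n$ is finite, and $D=\bigcup_{n\geq 1}D_n$.

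To finish I would produce the ordering explicitly. Since $\cl(A)$ is definable from $A$, the condition ``$(a,b)$ is a bounded complementary interval of $A$'' is definable, so $\nu:D\to\N$ given by $\nu(d):=\min\{\,n\geq 1 : d\in D_n\,\}$ is a definable function with finite fibres. Ordering $D$ by $d\prec d'$ iff $\nu(d)<\nu(d')$, or $\nu(d)=\nu(d')$ and $d<d'$, gives a definable linear order each of whose proper initial segments sits inside some $D_n$ and is therefore finite; so $(D,\prec)$ has order type at most $\omega$, equal to $\omega$ when $D$ is infinite and of no concern when $D$ is finite, since a finite set is $\omega$-orderable by definition. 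Either way $D$ is $\omega$-orderable. I do not expect a genuine obstacle here: the only delicate points are the verification that the endpoints of a bounded complementary interval lie in $\cl(A)$ --- which is exactly what confines all such intervals to the bounded set $[-M,M]$ and makes each $D_n$ finite --- and the bookkeeping showing that ``complementary interval of $A$'' is definable from $A$ via the closure operator; the combinatorial heart is merely that finitely many pairwise disjoint intervals of length $\geq 1/n$ fit into a bounded interval.
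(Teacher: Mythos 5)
Your overall strategy --- enumerating the endpoints in decreasing order of the length of an associated complementary interval, with ties broken by $<$, using the packing estimate that only finitely many pairwise disjoint subintervals of $[-M,M]$ have length at least $1/n$ --- is exactly the strategy of the paper's proof. The analytic content is correct: the endpoints of a bounded complementary interval lie in $\cl(A)$, hence all such intervals sit in $[-M,M]$; each $D_n$ is finite; $D=\bigcup_n D_n$; and a linear order with finite proper initial segments on an infinite set has type $\omega$.

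The gap is in the definability of your ordering, which is the whole point of the lemma ($\omega$-orderability requires a \emph{definable} order). You pass from real-valued lengths to the integer-valued $\nu(d)=\min\{n\geq 1: d\in D_n\}$ and assert $\nu$ is definable because the condition ``$(a,b)$ is a bounded complementary interval of $A$'' is definable. That gives definability of each individual $D_n$ (with parameter $1/n$) and of the real-valued function $\lambda$ sending $d$ to the maximal length of a bounded complementary interval with endpoint $d$, but not of $\nu$: we have $\nu(d)=\lceil 1/\lambda(d)\rceil$, and the relation $\nu(d)<\nu(d')$ unwinds to ``there exists $n\in\N$ with $\lambda(d')<1/n\leq\lambda(d)$,'' which quantifies over $\N$ (equivalently, uses the set $\{1/n : n\geq 1\}$ as a parameter set). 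In an arbitrary expansion of $(\R,<,+)$ --- the setting of the lemma --- neither $\N$ nor $\{1/n : n\geq 1\}$ need be definable, so this step is unjustified. The fix is immediate and is what the paper does: skip the discretization and compare lengths directly. The paper takes $\delta(d)$ to be the minimal length of a complementary interval with endpoint $d$ and declares $d\prec d'$ iff $\delta(d')<\delta(d)$, or $\delta(d')=\delta(d)$ and $d<d'$ (your maximal length $\lambda$ would work equally well). Every proper initial segment of this order is contained in the set of endpoints of complementary intervals of length at least $\delta(d)$, which is finite by your packing estimate; and the order is now manifestly definable, since it only compares values of a definable function $D\to\R$ under the ambient order.
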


\begin{proof}
The statement trivially holds when $D$ is finite.
We now consider the case that $D$ is infinite, and define an $\omega$-order $\prec$ on $D$.
Note that each element of $D$ is the endpoint of at most two bounded complementary intervals.
Let $\delta: D \to \R$ be the definable function that maps each $d$ to the minimal length of a complementary interval with endpoint $d$.
We declare $d \prec d'$ if either $\delta(d') < \delta(d)$ or ($\delta(d') = \delta(d)$ and $d < d'$).
It is easy to see that $\prec$ is an $\omega$-order on $D$ (see Section 2 of \cite{HW-Monadic} for details).
\end{proof}

\begin{lem}\label{prop:interpret}
Let $F \subseteq \R$ be definable and bounded, and $\oplus, \otimes : F^2 \to F$ be definable such that $( F , <, \oplus, \otimes)$ is isomorphic to $(\R,<,+,\cdot)$. Then $F$ either has interior or is nowhere dense and,
\begin{enumerate}
\item if $F$ has interior, then $\Cal R$ is field-type.
\item if $F$ is nowhere dense, then there is a definable $Z \subseteq F$ such that the structure $(F,<,\oplus, \otimes, Z)$ is isomorphic to $( \R,<,+,\cdot,\Z)$.
\end{enumerate}
\end{lem}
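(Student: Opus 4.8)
The dichotomy between $F$ having interior and $F$ being nowhere dense is immediate from Lemma~\ref{lem:monoton}, since $(F,<)$ is isomorphic to $(\R,<)$ via the field isomorphism. So the content is in statements (1) and (2).

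For (1), suppose $F$ has interior, so $F$ contains an open interval $J$. The field isomorphism $\varphi : (\R,<,+,\cdot) \to (F,<,\oplus,\otimes)$ is an order isomorphism of $(\R,<)$ onto $(F,<)$, hence continuous and monotone. The plan is to transport the field operations from $F$ to a subinterval of $\R$ via the partial inverse of $\varphi$. More precisely, pick a small open interval $I_0 \subseteq J$; since $\varphi$ is an order-preserving bijection $\R \to F$ and $F \supseteq J$, the preimage $\varphi^{-1}(I_0)$ is an open interval $I_1 \subseteq \R$, and $\varphi : I_1 \to I_0$ is a definable order-preserving homeomorphism. Define $a \oplus' b := \varphi^{-1}(\varphi(a) \oplus \varphi(b))$ and similarly $\otimes'$, restricted to a subinterval small enough that the values stay inside $I_0$. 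These are definable, continuous (being composites of definable continuous maps), and $(I_1', <, \oplus', \otimes')$ is, on the relevant domain, the restriction of a copy of $(\R,<,+,\cdot)$ whose order agrees with the order on $\R$. One then needs to check that this fits the definition of field-type — i.e.\ that the operations can be taken total on $I_1' \times I_1'$ with values in $I_1'$ — which is routine: conjugating the field $(\R,<,+,\cdot)$ by an affine rescaling one can arrange that the image interval is, say, $(-1,1)$ and that multiplication and addition of elements of a slightly smaller interval land back inside, then shrink; this is the standard ``localization'' of a field to an interval. (Alternatively one simply cites that from an ordered field structure on an interval whose order is the induced one, field-type follows, which is essentially the remark after Fact~\ref{fact:c2}.)

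For (2), suppose $F$ is nowhere dense. Since $F$ is bounded and definable, Lemma~\ref{lem:endpoint} gives that the set $D$ of endpoints of bounded complementary intervals of $F$ is $\omega$-orderable; moreover by Lemma~\ref{lem:monoton} (second claim) each bounded complementary interval of $F$ has at least one endpoint in $F$, so the set $Z := D \cap F$ — the endpoints-in-$F$ of bounded complementary intervals — is an $\omega$-orderable subset of $F$ which is infinite (as $F$ is nowhere dense and not finite, it has infinitely many bounded complementary intervals), and it is definable. The plan is to show $(F,<,\oplus,\otimes,Z)$ is isomorphic to $(\R,<,+,\cdot,\Z)$. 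Via the field isomorphism $\varphi$ we may work inside $(\R,<,+,\cdot)$ and must exhibit that $\varphi^{-1}(Z)$ is an $\omega$-orderable-in-$F$-coded copy of $\Z$; but this is false verbatim — $\varphi^{-1}(Z)$ is merely \emph{some} countable dense subset of $\R$. The correct move is instead: $Z$ is a definable, infinite, discretely-$\omega$-ordered subset of the field $F$, so inside the field $(F,<,\oplus,\otimes)$ the subset $Z$ generates (under $\oplus,\otimes$ and the field structure, all definable) a substructure, and the point is to show the \emph{closure} of $Z$ under the ring operations of $(F,\oplus,\otimes)$, or rather the definable set of elements of $F$ that are ``$Z$-integers'', is isomorphic to $\Z$. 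Here one uses that $Z$ is $\omega$-orderable together with the field structure to \emph{define} a copy of $\N$ inside $F$: being $\omega$-ordered and living in an ordered field, one can run the argument that an $\omega$-ordered dense-in-an-interval subset of a real-closed (or just ordered) field, together with the field operations, defines $\Z$ — this is exactly the import of the discussion around Fact~\ref{fact:basicfieldtype}, namely the result from \cite{discrete}/\cite{FHW-Compact} that an expansion of $(\R,<,+,\cdot)$ defining a dense $\omega$-orderable set defines $\Z$, applied internally to the field $(F,<,\oplus,\otimes) \cong (\R,<,+,\cdot)$ with the dense $\omega$-orderable parameter set $Z$ (dense because $F$ is dense in an interval and $Z$, being endpoints of complementary intervals, is dense wherever $F$ is). Transporting that defined copy of $\Z$ back through $\varphi$ and intersecting appropriately yields the desired definable $Z' \subseteq F$ with $(F,<,\oplus,\otimes,Z') \cong (\R,<,+,\cdot,\Z)$.

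The main obstacle is step (2): one must be careful that what gets fed into the ``dense $\omega$-orderable $\Rightarrow$ defines $\Z$'' machinery is genuinely dense and genuinely $\omega$-orderable \emph{as seen inside the interpreted field} $(F,<,\oplus,\otimes)$, not just inside $(\R,<,+)$. Density of $Z = D\cap F$ in (a subinterval of) the ambient $\R$ follows because a nowhere dense closed set has its complementary-interval endpoints dense in its own closure; $\omega$-orderability of $Z$ is inherited from that of $D$ via Lemma~\ref{lem:endpoint}; and because the field order on $F$ is the restriction of the usual order on $\R$, ``dense'' and ``$\omega$-order'' mean the same thing internally and externally. Once that compatibility is nailed down, the cited results do the rest.
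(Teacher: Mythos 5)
Your treatment of the dichotomy and of part (2) is correct and matches the paper's proof almost exactly: the paper also takes $D' = D \cap F$ (your $Z$), shows it is order-dense in $F$ because every bounded complementary interval has an endpoint in $F$ (second claim of Lemma~\ref{lem:monoton}), equips it with the $\omega$-order from Lemma~\ref{lem:endpoint}, observes that $(F,<,\oplus,\otimes,D',\prec')$ is isomorphic to an expansion of $(\R,<,+,\cdot)$ admitting a dense $\omega$-orderable set, applies Fact~\ref{fact:basicfieldtype} inside that expansion, and pulls the resulting copy of $\Z$ back through the isomorphism. Your concern about internal versus external density is resolved exactly as you indicate, since the order on $F$ is the restriction of the real order, so order-density of $D'$ in $F$ becomes topological density of its image in $\R$.

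Part (1), however, has a genuine gap. You assert that the restriction of $\varphi$ to $I_1$ is a \emph{definable} homeomorphism onto $I_0$ and then set $a \oplus' b := \varphi^{-1}(\varphi(a)\oplus\varphi(b))$. But the hypothesis only gives definability of $\oplus$ and $\otimes$; the isomorphism $\varphi : (\R,<,+,\cdot) \to (F,<,\oplus,\otimes)$ is an abstract (if unique) isomorphism, and neither it nor any restriction of it need be $\Cal R$-definable. This is not a hypothetical worry: in the application of this lemma in Theorem~\ref{thm:field-type} the isomorphism is $x \mapsto h'_x(0)$, which the paper claims to be definable only after adjoining multiplication (see the proof of Corollary~\ref{cor:laffine}(2)); over $(\R,<,+,f)$ alone one can compare derivatives definably (Lemma~\ref{claim1}) without being able to define the derivative itself. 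So your $\oplus'$ and $\otimes'$ are not known to be definable and the argument collapses. The repair --- which is the paper's actual (one-line) proof --- is never to leave $F$: since $F$ has interior it contains an open interval $J$, which is an order-interval of the field $(F,<,\oplus,\otimes)\cong(\R,<,+,\cdot)$; the first-order fact that $(\R,<,+,\cdot)$ defines, on any bounded open interval, field operations making that interval an ordered field isomorphic to $(\R,<,+,\cdot)$, transfers along the isomorphism at the meta level to give operations on $J$ defined by formulas in $\oplus,\otimes,<$ alone, hence $\Cal R$-definable; since the order on $J$ is the induced real order, $\Cal R$ is of field-type. The isomorphism is used only to transport a definability statement, never as a definable object.
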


\begin{proof}
Lemma~\ref{lem:monoton} shows that $F$ either has interior or is nowhere dense.
Item (1) above follows easily from the fact that for every open interval $I$ there are $(\R,<,+,\cdot)$-definable $\oplus',\otimes' : I^2 \to I$ such that $(I,<,\oplus',\otimes')$ is isomorphic to $(\R,<,+,\cdot)$.
We leave the details of (1) to the reader and prove (2).
Suppose $F$ is nowhere dense.
 Let $D$ be the set of endpoints of bounded complementary intervals of $F$ and $D' = D \cap F$.\newline

\noindent We first show that $D'$ is dense in $F$. Let $x,y \in F$ and $x < y$. Since $F$ is nowhere dense, there is a complementary interval $I$ of $F$ such that $x < z < y$ for every $z\in I$. By Lemma~\ref{lem:monoton} one of the endpoints of $I$ lies in $F$.
Thus $D'$ is dense in $F$.\newline

\noindent Let $\prec$ be the $\omega$-order on $D$ given by Lemma~\ref{lem:endpoint} and denote its restriction to $D'$ by $\prec'$.
Note that $(D',\prec')$ has order-type $\omega$.
Consider $\Cal F:=(F,<,\oplus,\otimes,D',\prec')$. Clearly $\Cal F$ is definable. Note that $\iota$ is an isomorphism between $\Cal F$ and an expansion of $(\mathbb{R},<,+,\cdot)$ that admits a dense $\omega$-orderable set.
An application of Fact~\ref{thm:H} shows that $\Cal F$ defines $Z := \iota^{-1}(\Z)$ and that $(F,<,\oplus,\otimes,Z)$ is isomorphic to\linebreak $(\R,<,+,\cdot,\Z)$.
\end{proof}

\begin{lemma}\label{claim1}
Let $f: [a,b] \to \R$ be $C^1$ and definable, let $\{ \left(g_x : [0,c_x] \to \R\right) : x \in X \}$ be a definable family of functions such that
\begin{enumerate}
\item  $f'(a) = 0$ and $f'(t) > 0$ for all $a < t \leq b$, and
\item $c_x > 0$, $g_x(0) = 0$, and $g'_x(0)$ exists for all $x \in X$.
\end{enumerate}
Then the relations $g'_y(0) < g'_x(0),$ $g'_x(0) \leq g'_y(0)$, and $g'_x(0) = g'_y(0)$ are definable on $X$.
\end{lemma}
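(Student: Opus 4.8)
The plan is to express each of the three relations in terms of the derivative $f'$, which is available and definable, using the fact that $f'$ vanishes only at $a$ and is strictly positive on $(a,b]$. The key observation is that for a function $g_x$ with $g_x(0)=0$ and $g_x'(0)$ existing, the value $g_x'(0)$ can be recovered by comparing the behaviour of $g_x$ near $0$ against the behaviour of $f$ near $a$: since $f(t)-f(a)$ behaves like an increasing function of $t-a$ that is ``flat to first order'' at $a$ (because $f'(a)=0$), composing with $f$ lets one read off whether $g_x'(0)$ is larger, smaller, or equal to $g_y'(0)$ by an order comparison that does not itself mention derivatives.

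Concretely, first I would set up the comparison function. For $x \in X$ and small $s>0$, consider the curve $s \mapsto (s, g_x(s))$ and push it forward by $f$: look at the graph of $t \mapsto f(a + g_x(t))$ near $t=0$ (shrinking $c_x$ so that $a + g_x(t)$ stays in $[a,b]$, which is possible since $g_x$ is continuous and $g_x(0)=0$). The chain rule gives that this composite has derivative $f'(a)\,g_x'(0) = 0$ at $0$, so first-order comparison is lost — that is the wrong composite. Instead, the right move (this is the Rabinovich/MPP trick adapted here) is to compare $g_x$ and $g_y$ directly by a squeezing argument: $g_x'(0) < g_y'(0)$ if and only if there is a rational (equivalently, definable real) $\lambda$ with $g_x'(0) < \lambda < g_y'(0)$, and the condition $g_x'(0) < \lambda$ is first-order expressible as ``$g_x(s) < \lambda s$ for all sufficiently small $s>0$'' together with a reverse inequality handling the other side — but quantifying over $\lambda$ is quantifying over $\R$, which is fine since $\R$ is the home sort. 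So $g_y'(0) < g_x'(0) \iff \exists \lambda\, \exists \varepsilon>0 \,\forall s \in (0,\varepsilon)\ \big(g_y(s) < \lambda s \ \wedge\ g_x(s) > \lambda s\big)$, and this is a definable condition on $(x,y) \in X^2$. The relations $g_x'(0) \le g_y'(0)$ and $g_x'(0) = g_y'(0)$ are then the negation and the conjunction-of-negations of such formulas, hence also definable.

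The role of hypothesis (1) — that $f$ is $C^1$ with $f'(a)=0$ and $f'>0$ on $(a,b]$ — is that it is precisely the hypothesis under which this lemma will be \emph{invoked} in the proof of Theorem~\ref{thm:field-type}, where the family $\{g_x\}$ will be built from translates of $f$ itself; so in the lemma it may appear superfluous for the definability statement per se, and indeed the argument above for definability of the three relations uses only clause (2). I would therefore state the proof as: clause (2) guarantees $g_x(0)=0$ and the existence of $g_x'(0)$, so the $\forall s$-over-a-small-interval characterization of $g_x'(0) < \lambda$ is valid; then the three relations are obtained by the explicit first-order formulas above, quantifying the slope parameter $\lambda$ over $\R$. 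The main obstacle — really the only subtlety — is making sure the domain bookkeeping works: one must restrict $s$ to $(0, \min(\varepsilon, c_x, c_y))$ so that $g_x(s)$ and $g_y(s)$ are both defined, and one must check that the ``$\exists \varepsilon > 0$'' does not interact badly with the family being merely definable rather than, say, continuous in $x$; but since we only need definability of a relation on $X^2$ and not any uniformity, this is routine, and I do not anticipate needing SBCT or any of the $D_\Sigma$ machinery here.
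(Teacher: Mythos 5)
There is a genuine gap, and it is located exactly at the point you flagged as unproblematic. Your formula
\[
\exists \lambda\, \exists \varepsilon>0\, \forall s \in (0,\varepsilon)\ \bigl(g_y(s) < \lambda s \ \wedge\ g_x(s) > \lambda s\bigr)
\]
is not a first-order formula in the language of $\Cal R$: the term $\lambda s$, with $\lambda$ a quantified real variable, is an instance of the graph of multiplication, which is not definable in an expansion of $(\R,<,+)$ in general. (If it were, the structure would already be of field-type and the entire paper would be moot.) Quantifying over $\lambda$ is fine; \emph{using} $\lambda s$ inside the matrix of the formula is not. Restricting $\lambda$ to rationals does not rescue this either, since multiplication by a fixed rational is definable but the resulting definition would be an infinite disjunction over $\Q$. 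Your remark that hypothesis (1) ``may appear superfluous for the definability statement per se'' is precisely the red flag: hypothesis (1) is the entire content of the lemma, because it is what substitutes for the unavailable scalar multiplication.

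The paper's proof uses $f$ itself as the source of comparison slopes. Since $f'$ is continuous with $f'(a)=0$ and $f'>0$ on $(a,b]$, the definable family of functions $\epsilon \mapsto f(z+\epsilon)-f(z)$, indexed by $z\in(a,b)$, has derivatives at $0$ that are positive and take arbitrarily small positive values. The characterization is then: $g'_x(0) < g'_y(0)$ if and only if there exists $z \in (a,b)$ such that
\[
g_x(\epsilon) + \bigl( f(z+\epsilon)-f(z) \bigr) < g_y(\epsilon) \quad \text{for all sufficiently small } \epsilon>0.
\]
The forward implication uses continuity of $f'$ at $a$ to find $z$ with $0 < f'(z) < g'_y(0)-g'_x(0)$; the reverse implication divides by $\epsilon$, takes the limit, and uses $f'(z)>0$ to turn the resulting weak inequality $g'_x(0)+f'(z)\le g'_y(0)$ into a strict one. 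This formula mentions only addition, order, $f$, and the family $\{g_x\}$, so it is definable. You would need to replace your quantification over slopes $\lambda$ by this existential quantification over $z$ to close the gap.
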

\begin{proof}
 We only prove the first claim, the latter two follow. Fix $x,y\in X$. We show that the following are equivalent:
\begin{itemize}
\item[(i)] $g'_x(0) < g'_y(0)$,
\item[(ii)] there is $z \in (a,b)$ such that
\begin{equation}\label{claim:eq}
\tag{$\star$} \quad  g_x(\epsilon)  + [ f( z + \epsilon ) - f(z) ] < g_y(\epsilon) \quad \text{for sufficiently small }\epsilon > 0 .
\end{equation}
\end{itemize}
Suppose (ii) holds. Let $z \in (a,b)$ be such that $\eqref{claim:eq}$ holds. Dividing by $\epsilon$ and taking the limit as $\epsilon \to 0$, we get
\[
g'_x(0) + f'(z) \leq g'_y(0).
\]
As $z > a$, we have $f'(z) > 0$. Thus $g'_x(0) < g'_y(0)$ and (i) holds.\newline

\noindent Suppose (i) holds. Let $\delta > 0$ be such that $g'_x(0) + \delta < g'_y(0)$.
Since $f'$ is continuous and $f'(a) = 0<f'(b)$, there is $z \in (a,b)$ such that $f'(z) < \delta$. Fix such $z$.
Then $g'_x(0) + f'(z) < g'_y(0)$. Thus
$$ \lim_{\epsilon \to 0} \frac{ g_x(\epsilon)  }{\epsilon} + \lim_{\epsilon \to 0} \frac{ f(z + \epsilon) - f(z) }{\epsilon} < \lim_{\epsilon \to 0} \frac{ g_y(\epsilon)}{\epsilon}.$$
Hence $$  \frac{ g_x(\epsilon)  }{\epsilon} +  \frac{ f(z + \epsilon) - f(z) }{\epsilon} < \frac{ g_y(\epsilon)}{\epsilon} \quad \text{for sufficiently small }\epsilon > 0 .$$
Therefore $\eqref{claim:eq}$ holds.
\end{proof}

\begin{proof}[Proof of Theorem \ref{thm:field-type}]
There are $a,b\in \R$ with $a<b$ such that $f'(a) \neq f'(b)$ and one of the following two cases holds:
\begin{itemize}
\item[(I)] $f'$ is strictly increasing or strictly decreasing on $[a,b]$,
\item[(II)] there is no open subinterval of $[a,b]$ on which $f'$ is strictly increasing or strictly decreasing.
\end{itemize}
Now replace $f$ by its restriction to $[a,b]$. So from now on, $f$ is a function from $[a,b]$ to $\R$ satisfying $f'(a) \neq f'(b)$ and either condition (I) or (II).
After replacing $f$ with $-f$ if necessary, we suppose that $f'(a) < f'(b)$.
In case (I) these assumptions imply $f'$ is strictly increasing.\newline

\noindent
Let $q\in \Q$ be such that $f'(a) < q < f'(b)$. By continuity of $f'$ there is an $x\in (a,b)$ such that $f'(x) = q$.
Continuity of $f'$ further implies that the set of such $x$ is closed. Let $c$ be the maximal element of $[a,b]$ such that $f'(c) = q$.
Note that $c < b$. By the intermediate value theorem, $f'(x)>q$ for all $x\in (c,b]$. After replacing $a$ with $c$ if necessary, we may assume that $f'(a) = q$ and $f'(x) > q$ for all $a < x \leq b$.\newline

\noindent Let $h : [a,b] \to \mathbb{R}$ be given by $h(x) = f(x) - q(x - a)$.
Note that $h'(x) = f'(x)  - q$ for all $x\in [a,b]$. Thus $h'(a) = 0$ and $h'(x) > 0$ for all $a < x \leq b$.
Moreover, $h'$ is strictly increasing if $f'$ is. Thus $h'$ is strictly increasing in case (I).
Since $q$ is rational, $h$ is definable. After replacing $f$ with $h$ if necessary, we may suppose that $f'(a) = 0$.\newline

\noindent Let $N\in \N$ satisfy $f'(b) \geq \frac{1}{N}$. After replacing $f$ with $Nf$ if necessary, we can assume  $f'(b) \geq 1$.
Let $d$ be the minimal element of $[a,b]$ such that $f'(d) = 1$. After replacing $b$ with $d$ if necessary, we may suppose that $f'(b) = 1$ and $0 < f'(x) < 1$ for all $a < x < b$.\newline

\noindent  Applying Lemma~\ref{claim1} to the definable family $g_x(t) = f(x + t) - f(x)$ we see that the relations $f'(x) < f'(y)$, $f'(x) \leq f'(y)$, and $f'(x) = f'(y)$ are definable on $I$. Let $E \subseteq [a,b]$ be the set of $x$ such that $f'(y) < f'(x)$ for all $y \in [a, x)$.
Observe that  $E$ is definable.
For every $t\in [0, 1]$ the set $\{ z\in [a, b] \ : \ f'(z) \geq t \}$ is closed and nonempty. Therefore this set has a minimal element $w$. Since $f'(a)=0$ and $f'$ is continuous, this minimal element $w$ must satisfy $f'(w) = t$. In particular, $w \in E$.
Thus for every $t\in [0,1]$ there is an $x \in E$ such that $f'(x) = t$. Note that $a,b \in E$. Furthermore, if $x, y \in E$ and $x < y$, then $f'(x) < f'(y).$
It follows that $x \mapsto f'(x)$ gives an isomorphism between $(E,<)$ and $([0,1],<)$.\newline

\noindent In case (I) we trivially have $E =[a,b]$, because in this case $f'$ is strictly increasing. If $E$ contains an open interval, then $f'$ must be strictly increasing on that interval. Thus $E$ has empty interior in case (II).\newline

\noindent For $x \in E \setminus \{b\}$ and $t\in [0, b - x]$ we set $f_x(t) = f(x + t) - f(x)$.
We declare $f_b(t) = t$ for all $t > 0$. Then $f_x'(0) = f'(x)$ for all $x \in E.$
As $f$ is strictly increasing, each $f_x$ is strictly increasing.
We suppose $a = 0$ after translating $[a,b]$ if necessary. Then $E$ is a subset of $[0,b]$.
We declare
$$ E_1 = -(  E \setminus \{0,b\}) + 2b , \quad E_2 = - (E \setminus \{0\}),  \quad E_3 = -E_1.$$
Then $E,E_1,E_2,E_3$ are pairwise disjoint as they are subsets of $[0,b], (b,2b), [-b,0)$, and $(-2b,-b)$ respectively.
Set $F = E \cup E_1 \cup E_2 \cup E_3.$
Note that in case (I) we have $F = (-2b,2b)$. So $F$ is an interval in this situation.
Furthermore, in case (II) the set $F$ has empty interior as $E$ and each $E_i$ have empty interior.
We now construct a definable family of functions $\{ h_x : x \in F \}$ with the following two properties:
\begin{enumerate}
\item[(i)] For all $t \in \mathbb{R}$ there is a unique $x \in F$ such that $h'_x(0) = t$.
\item[(ii)] If $x,y \in F$ and $x < y$, then $h'_x(0) < h'_y(0)$.
\end{enumerate}
When $x \in E$, we set $h_x = f_x$. When $x \in E_1$, set $h_x$ to be the compositional inverse of $f_{2b - x}$.
Since each $f_x$ is strictly increasing, each $f_x$ has a compositional inverse. Observe that $h'_x(0) = f'_{2b-x}(0)^{-1}$ for all $x \in E_1$.
It follows that for every $t > 1$ there is a unique $x \in E_1$ such that $h'_x(0) = t$.
When $x \in E_2$, we let $h_x = -f_{ -x }$. Then $h'_x(0) = - f'_{-x}(0)$ for all $x \in E_2$.
Again we directly deduce that for every $t \in [-1,0]$ there is a unique $x \in E_2$ such that $h'_x(0) = t$.
Finally, if $x \in E_3$, we set $h_x = -h_{-x}$. Also in this situation we get that for all $t < -1$ there is a unique $x \in E_3$ such that $h'_x(0) = t$.
Conditions (i) and (ii) above follow.\newline

\noindent We are ready to define the field structure on $F$. For this, we need to define two functions $\oplus,\otimes : F^2 \to F$.
Given $x,y \in F$, we let $x \oplus y$ be the unique element of $F$ such that
$$ h'_{ x \oplus y }(0) = ( h_x + h_y )'(0)$$
and $x \otimes y$ be the unique element of $F$ such that
$$ h'_{ x \otimes y} (0) = ( h_x \circ h_y )'(0) . $$
It follows easily from Lemma ~\ref{claim1} that $\oplus$ and $\otimes$ are definable.
By our construction, we immediately get that for all $x,y \in F$
$$ h'_{ x \oplus y}(0) = h'_x(0) + h'_y(0) \text{   and   } h'_{ x \otimes y}(0) = h'_x(0)h'_y(0). $$
So $x \mapsto h'_x(0)$ gives an isomorphism $(F,<,\oplus,\otimes) \to (\mathbb{R},<,+,\cdot)$.
As observed above, $F$ is an interval in case (I) and has empty interior in case (II).
Now apply Lemma~\ref{prop:interpret}.
\end{proof}

\noindent We record some corollaries.

\begin{cor}\label{cor:laffine}
Let $f : I \to \R$ be a non-affine $C^1$ and generically locally affine function. Then
\begin{enumerate}
\item $( \R, <, +,f)$ defines an isomorphic copy of $( \Cal P(\mathbb{N}), \mathbb{N}, \in, +, \cdot)$.
\item $(\R,<,+,\cdot,f)$ is type C.
\end{enumerate}
\end{cor}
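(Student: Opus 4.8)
The plan is to deduce both parts from Theorem~\ref{thm:field-type} and Fact~\ref{fact:basicfieldtype}, the bridge being the observation that generic local affineness of $f$ forces $f'$ to be constant on each connected component of a \emph{definable} dense open subset of $I$. Concretely, since ``$f$ is affine on $(a,b)$'' is expressed by
\[
\forall z\,\forall z'\,\forall\delta\ [\,z,z',z+\delta,z'+\delta\in(a,b)\ \rightarrow\ f(z+\delta)-f(z)=f(z'+\delta)-f(z')\,],
\]
the set $U$ of $x\in I$ having a neighborhood on which $f$ is affine is definable in $(\R,<,+,f)$, hence in $(\R,<,+,\cdot,f)$; it is open, and dense because $f$ is generically locally affine. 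On each connected component of $U$ the function $f$ is affine, so $f'$ is constant there. I would extract two consequences. First, $f'$ is strictly increasing or strictly decreasing on no open subinterval of $I$, because every such subinterval meets $U$ in a nonempty open set, a subinterval of which carries a constant value of $f'$. Second, since $f$ is non-affine, $f'$ is a continuous non-constant function on $I$, so $f'(I)$ is a nondegenerate interval; as $U$ is dense in $I$ and $f'$ is continuous, the slope set $S:=f'(U)$ is dense in $f'(I)$, hence dense in an open subinterval of $\R$.

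Part~(1) then follows at once: apply Theorem~\ref{thm:field-type} with $\Cal R=(\R,<,+,f)$. Here $f$ is definable, $C^1$, and non-affine, and by the first consequence its derivative is strictly monotone on no open subinterval, so we are in alternative~(2) of that theorem, which says precisely that $(\R,<,+,f)$ defines an isomorphic copy of $(\Cal P(\N),\N,\in,+,\cdot)$.

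For part~(2) I would work in $\Cal R=(\R,<,+,\cdot,f)$, which is trivially of field-type since it contains the real field. By Fact~\ref{fact:basicfieldtype} it therefore suffices to exhibit a definable dense $\omega$-orderable set, and I take $S=f'(U)$. This set is definable in $(\R,<,+,\cdot,f)$: $f'$ has definable graph there via the standard $\epsilon$--$\delta$ formula, which is correct because $f$ is differentiable everywhere, and $U$ is definable. By the second consequence $S$ is dense in an open subinterval of $\R$. Finally $S$ is $\omega$-orderable: the connected components of $U$ form a definable family in which only finitely many have length exceeding any prescribed positive real (they are pairwise disjoint and $I$ is bounded), so ordering them by decreasing length with ties broken by left endpoint is a definable order of type $\omega$; ordering each $s\in S$ by the $\prec$-least component on which $f$ has slope $s$ transports this to a definable order of type $\omega$ on $S$. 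Hence $(\R,<,+,\cdot,f)$ defines a dense $\omega$-orderable set and is of field-type, so it is type~C by Fact~\ref{fact:basicfieldtype}.

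Beyond invoking Theorem~\ref{thm:field-type}, this is essentially bookkeeping, and I do not anticipate a genuine obstacle; the only points that need a little care are the definability of $U$ and the construction of the $\omega$-ordering of its components, both disposed of above.
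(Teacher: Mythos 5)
Your proposal is correct, and for part (1) it coincides with the paper's argument: generic local affineness makes $f'$ locally constant on a dense open set, hence strictly monotone on no subinterval, and Theorem~\ref{thm:field-type}(2) does the rest.

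For part (2) you take a genuinely different route. The paper goes back \emph{inside} the proof of Theorem~\ref{thm:field-type}: it observes that, once $f'$ is definable over the real field, the isomorphism $x \mapsto h'_x(0)$ from the constructed $(F,<,\oplus,\otimes,Z)$ onto $(\R,<,+,\cdot,\Z)$ is $(\R,<,+,\cdot,f)$-definable, so $\Z$ itself is definable and type~C follows from Fact~\ref{fact:basicfieldtype}. You instead never touch that construction: you note $(\R,<,+,\cdot,f)$ is trivially of field-type, build a dense $\omega$-orderable set by hand --- the slope set $f'(U)$, ordered via an $\omega$-order on the components of $U$ by decreasing length with ties broken by left endpoint --- and then apply the first clause of Fact~\ref{fact:basicfieldtype}. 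Your ordering argument is the same device as the paper's Lemma~\ref{lem:endpoint} (finitely many disjoint subintervals of a bounded interval exceed any given length), so you could have cited that lemma rather than reproving it, but the construction is sound; the density of $f'(U)$ in the nondegenerate interval $f'(I)$ follows from continuity of $f'$ and density of $U$ exactly as you say. The trade-off: the paper's route is shorter given that the machinery of Theorem~\ref{thm:field-type} is already on the table and yields the slightly stronger conclusion that $\Z$ itself is definable in $(\R,<,+,\cdot,f)$; yours is self-contained, using only the statement (not the proof) of Theorem~\ref{thm:field-type}, and only for part (1). Both are valid.
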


\begin{proof}
The derivative of $f$ is locally constant almost everywhere and therefore is not strictly increasing or strictly decreasing on any open subinterval of $I$.
Thus  $(\R,<,+,f)$ defines an isomorphic copy of $( \Cal P(\mathbb{N}), \mathbb{N}, \in, +, \cdot)$ by Theorem~\ref{thm:field-type}. Thus (1) holds.\newline

\noindent For (2), first observe that $f'$ is definable in $(\R,<,+,\cdot, f)$. Let $(F,<,\oplus,\otimes)$ be constructed from $f$ as in the proof of Theorem ~\ref{thm:field-type}. Since $F$ is nowhere dense, we obtain by Lemma \ref{prop:interpret} a $(\R,<,+,\cdot, f)$-definable set $Z\subseteq F$ such that the ordered field isomorphism expands to an isomorphism between $(F,<,\oplus,\otimes,Z)$ and \linebreak $(\R,<,+,\cdot,\mathbb{Z})$.
An inspection of the proof of Theorem ~\ref{thm:field-type} shows that the isomorphism $(F,<,\oplus,\otimes,Z) \to (\R,<,+,\cdot,\mathbb{Z})$ given by $x \mapsto h'_x(0)$ is $(\R,<,+,\cdot,f)$-definable. It follows that $(\R,<,+,\cdot,f)$ is type C.
\end{proof}

\noindent The following corollary shows in particular that if $\Cal R$ is type B and does not define isomorphic copy of the standard model of second order arthimetic, then every definable $C^1$ function $f : I \to \R$ is affine.
This is a special case of Question~\ref{qst:type-B}.

\begin{cor}\label{cor:cantor1}
Let $K$ be a subfield of $\R$ such that $\Cal R$ defines a dense $\omega$-orderable subset of $K$.
Then
\begin{enumerate}
\item if $\Cal R$ is not type C, then every definable $C^2$ function $f : I \to \R$ is affine with slope in $K$.
\item if $\Cal R$ does not define an isomorphic copy of $( \Cal P(\mathbb{N}), \mathbb{N}, \in, +, \cdot)$, then every definable $C^1$ function $f : I \to \R$ is affine with slope in $K$.
\end{enumerate}
\end{cor}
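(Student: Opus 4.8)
The plan is to handle both parts by the same two steps: force $f$ to be affine, then force its slope into $K$. Fix throughout a dense $\omega$-orderable subset $D$ of $K$ definable in $\Cal R$; replacing $D$ by its intersection with a bounded open subinterval of one in which it is dense, we may take $D$ bounded (an infinite definable subset of an $\omega$-orderable set is again $\omega$-orderable).

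For the first step, suppose $f$ is not affine. In case (1) $f$ is $C^2$, so $\Cal R$ is of field-type by Fact~\ref{fact:c2}; since $\Cal R$ also defines the dense $\omega$-orderable set $D$, Fact~\ref{fact:basicfieldtype} makes $\Cal R$ type~C, contradicting the hypothesis of (1). In case (2) $f$ is $C^1$, so Theorem~\ref{thm:field-type} gives that either $\Cal R$ is of field-type or $\Cal R$ defines an isomorphic copy of $(\Cal P(\N),\N,\in,+,\cdot)$; the latter contradicts the hypothesis of (2) at once. In the former case, transporting $D$ (after a rational translation placing a dense portion of it inside the interval that carries the definable field) through the $\Cal R$-definable copy of $(\R,<,+,\cdot)$ and invoking the second part of Fact~\ref{fact:basicfieldtype}, one sees that $\Cal R$ defines an isomorphic copy of $(\R,<,+,\cdot,\Z)$, equivalently of $(\Cal P(\N),\N,\in,+,\cdot)$ --- again a contradiction. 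Hence in both parts $f$ is affine; write $f(x)=\lambda x+\mu$.

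For the second step, assume toward a contradiction that $\lambda\notin K$, so $\lambda\ne 0$. Then scalar multiplication by $\lambda$ is definable (it agrees with $t\mapsto f(c+t)-f(c)$ near $0$ for any fixed $c\in I$, and extends to a bounded neighborhood of $D$ by additivity). Put $C:=\lambda D$; this is definable, bounded, dense in an interval (since $x\mapsto\lambda x$ is a homeomorphism), and $\omega$-orderable (transport the $\omega$-order on $D$ along the definable bijection $D\to C$). Since $K$ is a field, $D-D\subseteq K$, so whenever $z\in(C-C)\cap(D-D)$, say $z=\lambda(d_1-d_2)=d_3-d_4$ with $d_i\in D$, we must have $d_1=d_2$: otherwise $\lambda=(d_3-d_4)/(d_1-d_2)\in K$. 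Thus $(C-C)\cap(D-D)=\{0\}$, and this persists after separately translating and rescaling $C$ and $D$ by positive rationals --- which keeps $D$ inside $K$ and preserves $\omega$-orderability and density --- so that $C,D\subseteq[0,1]$. Now Fact~\ref{fact:diff} shows $\Cal R$ is type~C, contradicting the hypothesis of (1) directly and that of (2) via the implication used in the first step. Therefore $\lambda\in K$.

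The whole argument is short, and its one genuine idea is in the second step: a slope $\lambda\notin K$ turns the single given dense $\omega$-orderable set $D\subseteq K$ into a second such set $\lambda D$ whose difference set meets $D-D$ only at $0$, which is exactly what Fact~\ref{fact:diff} requires. The only point that needs care is the implication in part (2) from ``$\Cal R$ is of field-type and admits a dense $\omega$-orderable set'' to ``$\Cal R$ defines an isomorphic copy of $(\Cal P(\N),\N,\in,+,\cdot)$''; this is a routine transport of structure through the definable real field together with Fact~\ref{fact:basicfieldtype} and the recorded equivalence between defining a copy of $(\Cal P(\N),\N,\in,+,\cdot)$ and of $(\R,<,+,\cdot,\Z)$.
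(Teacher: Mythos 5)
Your proposal is correct and follows essentially the same route as the paper: reduce to affineness via Fact~\ref{fact:c2} (resp.\ Theorem~\ref{thm:field-type}) together with Fact~\ref{fact:basicfieldtype}, then rule out a slope $\lambda\notin K$ by observing that $(\lambda D-\lambda D)\cap(D-D)=\{0\}$ and invoking Fact~\ref{fact:diff}. The only cosmetic difference is that in part (2) you dispatch the field-type case by directly transporting $D$ to build a copy of $(\R,<,+,\cdot,\Z)$, whereas the paper first notes that not defining second-order arithmetic already rules out type C and hence (being type B) field-type; both are the same chain of facts in a slightly different order.
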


\begin{proof}
First observe that if $\Cal R$ does not define an isomorphic copy of $( \Cal P(\mathbb{N}), \mathbb{N}, \in, +, \cdot)$, then $\Cal R$ is not type C. Since $\Cal R$ defines a dense $\omega$-orderable set, it has to be type B. Therefore $\Cal R$ is not field-type by Fact~\ref{thm:H1}.
Thus by Theorem \ref{thm:field-type} every definable $C^2$ function $f : I \to \R$ is affine.
Moreover, if in addition $\Cal R$ does not define an isomorphic copy of $( \Cal P(\mathbb{N}), \mathbb{N}, \in, +, \cdot)$, then every definable $C^1$ function $f : I  \to \R$ is affine by Theorem \ref{thm:field-type}. It is left to show that the slope of a definable affine function $f : I \to \R$ is in $K$. Towards a contradiction suppose there is such a function with slope $\alpha\notin K$. Its definability immediately implies definability of $x \mapsto \alpha x$ on $[0,1]$.
Let $D$ be a dense $\omega$-orderable subset of $K$. Note that $\alpha D$ is also dense in $\R$.
Observe $D - D \subseteq K$ and $\alpha ( D - D ) \subseteq \alpha K$.
Since $\alpha \notin K$, we have $\alpha K \cap K = \{ 0 \}$. This yields
$$ ( D - D ) \cap (\alpha D - \alpha D )= \{  0 \}.$$
Thus $\Cal R$ is type C by Fact~\ref{fact:diff}. A contradiction.
\end{proof}

\noindent Let $C$ be the middle-thirds Cantor set, or one of the generalized Cantor sets discussed in \cite{BH-Cantor}.
It is observed in the proof of \cite[Corollary 3.10]{FHW-Compact} and the introduction of \cite{BH-Cantor} that $(\R,<,+,C)$ defines a dense $\omega$-orderable subset of $\Q$. Since $(\R,<,+,C)$ has a decidable theory, it can not define an isomorphic copy of $( \Cal P(\mathbb{N}), \mathbb{N}, \in, +, \cdot)$. Therefore Corollary~\ref{cor:cantor1} shows that if $f : I \to \R$ is $C^1$ and $(\R,<,+,C,f)$ does not define an isomorphic copy of $( \Cal P(\mathbb{N}), \mathbb{N}, \in, +, \cdot)$, then $f$ is affine with rational slope.

\section{The one-variable case of Theorem B}
In this section we prove Theorem B for one-variable functions $f : I \to \R$.

\begin{theorem}
\label{thm:A-one-Var}
A continuous definable function $f : I \to \R$ is generically $C^k$ for any $k \geq 1$.
\end{theorem}

\noindent Theorem~\ref{thm:A-one-Var} and Theorem~\ref{thm:dsig-cont} together show that if $f : U \to \R$ is $\DSig$, where $U \subseteq \R$ is open and definable, then $f$ is generically $C^k$.\newline

\noindent The reader will find it helpful to have copies of \cite{FHW-Compact, HW-Monadic} handy, as we repeatedly make use of results from these papers. We need to include a remark about the work in \cite{HW-Monadic}. By \cite[Theorem B]{HW-Monadic}, an expansion of $(\R,<,+)$ that does not define an isomorphic copy of $(\Cal P(\N),\N,\in,+1)$ is type A. In Sections 3 and 4 of \cite{HW-Monadic} all results were stated for expansions that do not define $(\Cal P(\N),\N,\in,+1)$. However, the proofs only made use of the fact that such structures are type A. This should have been made clear, but the authors did not anticipate the relevance of the weaker assumption.

\subsection{Prerequisites}
Throughout this subsection $\Cal R$ is type A.
Before diving into the proof we establish a few basic facts for later use.

\begin{lem}\label{finiteline}
Let $X \subseteq I \times \mathbb{R}_{>0}$ be $\DSig$ such that $X_x$ is finite for every $x \in I$.
Then there is a nonempty open $J \subseteq I$ and $\epsilon > 0$ such that $J \times [0, \epsilon]$ is disjoint from $X$.
\end{lem}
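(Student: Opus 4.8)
The plan is to argue by contradiction: assuming that no pair $(J,\epsilon)$ as in the statement exists, I will exhibit a single $x\in I$ whose fiber $X_x$ is infinite. The first move is to package the hypothesis. For $\epsilon>0$ set
\[
A_\epsilon \ :=\ \{\, x\in I : X_x\cap(0,\epsilon]\neq\emptyset \,\}\ =\ \pi\bigl(X\cap(I\times(0,\epsilon])\bigr),
\]
where $\pi:\R^2\to\R$ is the first coordinate projection. Because $X\subseteq I\times\R_{>0}$, the set $X\cap(I\times(0,\epsilon])$ is the intersection of the $\DSig$ set $X$ with the closed (hence $\DSig$) half-plane $\{y\le\epsilon\}$, so by Fact~\ref{fact:basicdsig} each $A_\epsilon$ is $\DSig$. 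Moreover, for an open subinterval $J\subseteq I$, the box $J\times[0,\epsilon]$ is disjoint from $X$ if and only if $J\cap A_\epsilon=\emptyset$ (the segment $J\times\{0\}$ automatically misses $X$). Hence the failure of the conclusion means exactly that for \emph{every} $\epsilon>0$ the $\DSig$ set $A_\epsilon$ meets every open subinterval of $I$, i.e. is dense in $I$; a dense set is not nowhere dense, so SBCT (Fact~\ref{thm:sbct}) forces each $A_\epsilon$ to have nonempty interior.

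The second step is a nested-interval construction driven by the sequence $\epsilon=1/n\to 0$. Pick a nondegenerate closed interval $K_1\subseteq\Int(A_1)$. Given $K_n$, the set $A_{1/(n+1)}\cap\Int(K_n)$ is $\DSig$ by Fact~\ref{fact:basicdsig} and is dense in the nonempty open interval $\Int(K_n)$, so by SBCT it has interior; let $K_{n+1}$ be a nondegenerate closed interval contained in it. Then $K_1\supseteq K_2\supseteq\cdots$, and for every $x\in K_n$ we have $X_x\cap(0,1/n]\neq\emptyset$. By compactness $\bigcap_n K_n\neq\emptyset$; fix $x^*$ in this intersection. For each $n$ there is $y_n\in X_{x^*}$ with $0<y_n\le 1/n$, so $X_{x^*}$ contains positive reals arbitrarily close to $0$ and is therefore infinite, contradicting the hypothesis that every fiber of $X$ is finite.

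All the weight of the argument sits in the single implication "$\DSig$ and dense $\Rightarrow$ has interior" from SBCT, applied repeatedly as $\epsilon\downarrow 0$; the finiteness of the fibers enters only at the last line, to convert an accumulation of fiber values at $0$ into a contradiction. The only things needing care are keeping the sets $A_\epsilon$ and their intersections with open subintervals inside the class $\DSig$ (immediate from Fact~\ref{fact:basicdsig}) and noting that the segments $J\times\{0\}$ are irrelevant since $X$ lies strictly above the $x$-axis. I do not anticipate any genuine obstacle beyond this routine bookkeeping.
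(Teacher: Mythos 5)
Your proof is correct, but it follows a genuinely different route from the paper's. The paper opens up the $\DSig$ structure of $X$: it takes a witnessing family $\{B_{s,t}\}$ of compact sets, uses the \emph{classical} Baire Category Theorem over the parameters $(s,t)$ together with finiteness of the fibers to find one compact piece $B_{s,t}$ that contains all of $X$ above a whole subinterval $\cl(J)$, and then reads off $\epsilon$ as the distance between the disjoint compact sets $B_{s,t}$ and $\cl(J)\times\{0\}$; SBCT is invoked only once, to upgrade ``$D_{s,t}$ somewhere dense'' to ``$D_{s,t}$ has interior.'' You never touch the witnessing family: you argue by contradiction, iterate SBCT along $\epsilon=1/n$ in a nested-interval construction, and use finiteness of the fibers only at the very end, to forbid a fiber from accumulating at $0$. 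Your version is shorter and in fact proves a formally stronger statement (it suffices that $0$ is not an accumulation point of any fiber $X_x$, rather than that each $X_x$ is finite, whereas the paper's step ``every $x$ lies in some $D_{s,t}$'' genuinely uses that a finite fiber sits inside a single $(B_{s,t})_x$ of the directed union). What the paper's approach buys in exchange is a direct, non-contradictory argument that localizes $X$ inside one compact stratum of its $\DSig$ presentation, which is the technique reused elsewhere in that paper; your argument trades that for repeated applications of SBCT plus an external (non-definable, but harmless) choice of the intervals $K_n$. Both are valid.
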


\begin{proof}
Let $\pi : I \times \mathbb{R}_{>0}\to \R$ be the projection onto the first coordinate.
By Fact~\ref{fact:basicdsig} $\pi(X)$ is $D_\Sigma$. Therefore $\pi(X)$ either has interior or is nowhere dense by SBCT.
If $\pi(X)$ is nowhere dense, then there is an open subinterval $J \subseteq I$ that is disjoint from $\pi(X)$. For this subinterval $J$ we get that $J \times \R_{\geq 0}$ is disjoint from $X$. \newline

\noindent Now suppose that $\pi(X)$ has interior. Let $I'$ be an open subinterval of $I$ contained in $\pi(X)$.
After replacing $I$ with $I'$ and $X$ with $X \cap [I' \times \R]$, we may suppose that $\pi(X) = I$.
 Let $\{ B_{s,t} : s,t \in \mathbb{R}_{>0} \}$ be a definable family of compact sets witnessing that $X$ is $\DSig.$
Let
$$ C_{s,t} = \pi( X \setminus B_{s,t} ) \quad \text{and} \quad D_{s,t} = I \setminus C_{s,t} \quad \text{for all } s,t > 0. $$
As $\pi(X)=I$, we have $x \in D_{s,t}$ if and only if $X_x \subseteq (B_{s,t})_x$.
Since each $X_x$ is finite, every $x \in I$ is contained in some $D_{s,t}$. Thus $\bigcup_{s,t} D_{s,t}= I$.
By the classical Baire Category Theorem there are $s,t \in \R_{>0}$ such that  $D_{s,t}$ is somewhere dense. Fix such $s$ and $t$.
Then $X \setminus B_{s,t}$ is the intersection of a $D_\Sigma$ set by an open set and is thus $D_\Sigma$. Hence $C_{s,t}$ is $D_\Sigma$ as well.
Because $D_{s,t}$ is somewhere dense and the complement of a $\DSig$ set, $D_{s,t}$ has interior by SBCT. Let $J$ be an open subinterval whose closure is contained in the interior of $D_{s,t}$.
Then
\[
X \cap \left[ \cl(J) \times \mathbb{R}_{>0} \right] = B_{s,t} \cap \left[ \cl(J) \times \mathbb{R}_{>0} \right].
\]
As $\cl(J) \times \{0\}$ and $B_{s,t}$ are disjoint compact subsets of $\mathbb{R}^2$, there is an $\epsilon > 0$ such that no point in $B_{s,t}$ lies within distance $\epsilon$ of any point in $\cl(J) \times \{ 0 \}$. For such an $\epsilon$ the set
$J \times [0, \epsilon]$ is disjoint from $B_{s,t}$, and thus disjoint from $X$.
\end{proof}

\begin{defn} A subset $D$ of $\R_{>0}$ is a \textbf{sequence set} if it is bounded and discrete with closure $D\cup \{0\}$.
\end{defn}

\noindent It is easy to see that $(D,>)$ has order type $\omega$ when $D$ is sequence set. By \cite[Lemma 3.2]{HW-Monadic} our expansion $\Cal R$ either defines a sequence set or every bounded nowhere dense definable subset of $\R$ is finite.

\begin{lem}\label{lem:unionomega} Let $D$ be a definable sequence set and let $X\subseteq D\times \R$ be definable such that $X_d$ is nowhere dense for each $d\in D$. Then $\bigcup_{d \in D} X_d$ is nowhere dense.
\end{lem}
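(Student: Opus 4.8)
The plan is to argue by contradiction using SBCT together with the classical Baire Category Theorem, exactly in the spirit of the proof of Lemma~\ref{finiteline}. Suppose $Y := \bigcup_{d\in D} X_d$ is somewhere dense. Since $X\subseteq D\times\R$ is definable and $D$ is a sequence set, $D$ is closed (its closure is $D\cup\{0\}$ and $0\notin D$... more precisely $D$ is discrete in $\R_{>0}$, so $X$ is a definable subset of a closed-in-$\R^2$ ambient set); in any case the point is that we want to produce a $D_\Sigma$ set to which SBCT applies. First I would replace $X$ by its intersection with $D\times J$ for an open interval $J$ in which $Y$ is dense, so that we may assume $Y$ is dense in $J$. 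The goal is to locate a single "slice index" $d$, or a small cluster of them near $0$, whose contribution is already somewhere dense — contradicting that each $X_d$ is nowhere dense.

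The key step is a truncation argument. For $\epsilon>0$ let $D_{<\epsilon} = D\cap(0,\epsilon)$ and $D_{\geq\epsilon} = D\cap[\epsilon,\infty)$; the latter is finite since $D$ is a sequence set. Write $Y_{<\epsilon} = \bigcup_{d\in D_{<\epsilon}} X_d$ and $Y_{\geq\epsilon} = \bigcup_{d\in D_{\geq\epsilon}} X_d$. Since $D_{\geq\epsilon}$ is finite and each $X_d$ is nowhere dense, $Y_{\geq\epsilon}$ is nowhere dense, so $\cl(Y)\setminus\cl(Y_{\geq\epsilon})$ is a nonempty open subset of $\cl(Y)$, whence $Y_{<\epsilon}$ is still dense in some subinterval of $J$. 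Now the family $\{\,Y_{<\epsilon} : \epsilon>0\,\}$ is a decreasing (as $\epsilon\to 0$) definable family whose union over a countable cofinal sequence $\epsilon\to 0$ is... here one must be careful: $\bigcap_{\epsilon>0} Y_{<\epsilon} = \emptyset$ since every $d\in D$ is excluded once $\epsilon<d$. So instead I would run Baire on the complements: for a fixed rational enumeration or, better, directly observe that $Y = \bigcup_n Y_{<1/n}$ is false — rather $Y\setminus Y_{<1/n} = \bigcup_{d\in D_{\geq 1/n}} X_d$ is an increasing union of nowhere dense sets covering $Y$. Thus $\cl(J)$ is covered by the closed nowhere dense sets $\cl(Y_{\geq 1/n})$ together with $\cl(J)\setminus\cl(Y)$'s complement — but $Y$ is dense in $J$, so $\cl(J) = \bigcup_n \cl(Y_{\geq 1/n})$, contradicting the classical Baire Category Theorem since each $\cl(Y_{\geq 1/n})$ is nowhere dense.

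Let me restate that last step cleanly, as it is the crux. Since $D$ is a sequence set, $D = \bigcup_n D_{\geq 1/n}$ with each $D_{\geq 1/n}$ finite, hence $Y = \bigcup_n Y_{\geq 1/n}$ where $Y_{\geq 1/n} = \bigcup_{d\in D_{\geq 1/n}} X_d$ is a finite union of nowhere dense sets and therefore nowhere dense. If $Y$ were dense in some interval $J$, then $\cl(J) \subseteq \cl(Y) = \cl\big(\bigcup_n Y_{\geq 1/n}\big) = \bigcup_n \cl(Y_{\geq 1/n})$ exhibits the Baire space $\cl(J)$ as a countable union of closed nowhere dense sets, which is impossible. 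Hence $Y$ is nowhere dense.

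The main obstacle I anticipate is purely one of bookkeeping: confirming that $D_{\geq 1/n}$ really is finite (immediate from $\cl(D) = D\cup\{0\}$ and discreteness of $D$, so $D$ has no accumulation point except possibly $0$, forcing $D\cap[1/n,\infty)$ to be finite for each $n$) and that the union $\bigcup_n D_{\geq 1/n}$ exhausts $D$ (immediate since every $d\in D$ is positive). No appeal to SBCT or definability of the union is actually needed — the classical Baire Category Theorem suffices because $D$ being a sequence set hands us a countable exhaustion by finite sets for free. This is strictly simpler than Lemma~\ref{finiteline}, where the finiteness was of the slices $X_x$ rather than of an index set, and is why the authors can state it so tersely.
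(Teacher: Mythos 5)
There is a genuine gap, and it sits exactly at the step you flag as the crux. You write $\cl(J)\subseteq\cl(Y)=\cl\bigl(\bigcup_n Y_{\geq 1/n}\bigr)=\bigcup_n\cl(Y_{\geq 1/n})$, but closure does not commute with countable unions: $\cl\bigl(\bigcup_n A_n\bigr)\supseteq\bigcup_n\cl(A_n)$ can be strict (take $A_n$ to be singletons enumerating $\Q$). What your decomposition actually shows is that $\bigcup_{d\in D}X_d$ is a countable union of nowhere dense sets, i.e.\ \emph{meager} --- and a meager set can perfectly well be dense in an interval, so no contradiction with the Baire Category Theorem arises. Indeed your closing remark that ``no appeal to SBCT or definability of the union is actually needed'' is the tell: the statement is false as a purely topological assertion. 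Taking $D=\{1/n:n\geq 1\}$ and $X_{1/n}=\{q_n\}$ an enumeration of $\Q\cap[0,1]$ gives a sequence set $D$ and slices that are single points, yet $\bigcup_d X_d$ is dense in $[0,1]$. The lemma is only true because $X$ is definable in a type A structure, so any correct proof must use that hypothesis somewhere.

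The paper's proof does use it. It sets $Y_d:=\bigcup_{e\in D,\,e\geq d}X_e$, notes each $Y_d$ is a finite union of nowhere dense sets (since $(D,>)$ has order type $\omega$, the set $\{e\in D: e\geq d\}$ is finite --- this part matches your truncation idea), observes that $\{Y_d: d\in D\}$ is an \emph{increasing definable} family of nowhere dense sets, and then invokes \cite[Lemma 3.3]{HW-Monadic}, which says that the union of such a family is nowhere dense. That cited lemma is where definability and the type A assumption do the work that the classical Baire Category Theorem cannot. To repair your argument you would need to replace the false ``closure of the union equals union of the closures'' step with an appeal to a result of this kind (or reprove it via SBCT applied to a suitable $D_\Sigma$ presentation of the union), rather than to classical Baire alone.
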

\begin{proof}
We first write $\bigcup_{d \in D} X_d$ as an increasing union.
Set
\[
Y := \{ (d,x)\in D \times \R \ : \ \exists e \in D \ e \geq d \wedge (e,x) \in X\}.
\]
Observe that $\bigcup_{d \in D} X_d = \bigcup_{d\in D} Y_d$. Because $Y_d \subseteq Y_e$ when $d \geq e$, the family $\{Y_d \ : \ d \in D\}$ is increasing.
As $(D,>)$ has order type $\omega$, the set $\{ e \in D \ : \ d \leq e\}$ is finite for every $d\in D$. Therefore $Y_d$ is a finite union of nowhere dense sets, and hence nowhere dense for every $d \in D$.  By \cite[Lemma 3.3]{HW-Monadic} the set $\bigcup_{d\in D} Y_d$ is nowhere dense. It follows directly that $\bigcup_{d \in D} X_d$ is nowhere dense.
\end{proof}

\subsection{Proof of Theorem~\ref{thm:A-one-Var}}
Throughout this subsection $\Cal R$ is type A.
Let $I = (a,b)$, $f :I \to \R$,  and $h=(h_1,\dots,h_k) \in \R^k$. We define the \textbf{generalized $k$-th difference of $f$} as follows:
\[
\Delta^0 f(x) := f(x),
\]
and for $k\geq 1$
\[
\Delta_{h}^k f(x) := \Delta^{k-1}_{(h_1,\dots,h_{k-1})} f(x+h_k) - \Delta^{k-1}_{(h_1,\dots,h_{k-1})} f(x).
\]
Observe that $\Delta_{(l_1,l_2)}^{k} f(x) = \Delta^1_{l_2} \Delta^{k-1}_{l_1} f(x)$ whenever $l_1 \in \R^{k-1}$ and $l_2 \in \R$. Note that for given $h$, the function $\Delta_{h}^k f$ is defined on the interval $(a,b-k\|h\|)$.\newline

\noindent In the proof of the o-minimal case of Theorem~\ref{thm:A-one-Var} in \cite{LasStein}, one only has to consider the usual $k$-th difference (that is the case when $h_1=\dots=h_n$). Our proof of Theorem~\ref{thm:A-one-Var} however depends crucially on allowing the $h_i$ to differ. The reason for this difference between the two proofs is that the o-minimal frontier inequality is applied in \cite{LasStein}, and this inequality doesn't hold for type A expansions in general.\newline

\noindent Let $J$ be an open subinterval of $I$ and $k\in \N$. A tuple $(u,x) \in \R^{k}_{\geq 0} \times J$ is \textbf{$(J,k)$-suitable} if $x + k\|u\| \in J$. We denote the set of such pairs by $S_{J,k}$. Note that $S_{J,k}$ is open and $\Delta_h^k f(x)$ is defined for each $(h,x)\in S_{J,k}$.\newline

\noindent If $I=(a,b)$ and $(x,h)\in I \times \R_{>0}$, then $((h,\dots,h),x) \in S_{I,k}$ if and only if $a < x < x+kh < b$.\newline

\noindent The following fact about generalized $k$-th differences follows easily by applying induction to $k$. We leave the details to the reader.

\begin{lem}\label{lem:kthdiff}  Let $k \in \N$, $h=(h_1,h_2) \in \R \times \R^{k-1}$ and $x \in \R$ such that $(h,x) \in S_{I,k}$. Let $f,g :I \to \R$ be two functions. Then
\begin{enumerate}
\item $\Delta_{(h_1,h_2)}^k f(x) = \Delta^{k-1}_{h_2} \Delta^1_{h_1} f(x)$, and
\item $\Delta_h^k (f + g)(x) = \Delta^k_h f(x) + \Delta^k_h g(x)$.
\end{enumerate}
\end{lem}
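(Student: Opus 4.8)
The statement is a pair of elementary identities about generalized $k$-th differences, and the only sensible approach is induction on $k$. For part (2), the additivity $\Delta_h^k(f+g) = \Delta_h^k f + \Delta_h^k g$, the base case $k=0$ is the definition $\Delta^0 f(x) = f(x)$, and the inductive step is immediate from the recursive definition: writing $h = (h_1,\dots,h_k)$ and $h' = (h_1,\dots,h_{k-1})$, one has $\Delta_h^k(f+g)(x) = \Delta_{h'}^{k-1}(f+g)(x+h_k) - \Delta_{h'}^{k-1}(f+g)(x)$, and applying the induction hypothesis to each of the two terms and regrouping gives $\Delta_h^k f(x) + \Delta_h^k g(x)$. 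No care is needed beyond checking that both $x+h_k$ and $x$ lie in the domain of $\Delta_{h'}^{k-1}$, which is exactly the content of $(h,x)\in S_{I,k}$.

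For part (1), the identity $\Delta_{(h_1,h_2)}^k f(x) = \Delta_{h_2}^{k-1}\Delta_{h_1}^1 f(x)$ where $h_1\in\R$ and $h_2\in\R^{k-1}$, again induct on $k$. The base case is $k=1$, where the right-hand side reads $\Delta^0_{()}\Delta^1_{h_1} f(x) = \Delta^1_{h_1} f(x)$ by the $k=0$ clause, matching the left-hand side. For the inductive step, write $h_2 = (h_2',h_k)$ with $h_2'\in\R^{k-2}$. Expanding the left side via the definition in the last coordinate, $\Delta_{(h_1,h_2)}^k f(x) = \Delta_{(h_1,h_2')}^{k-1} f(x+h_k) - \Delta_{(h_1,h_2')}^{k-1} f(x)$; applying the induction hypothesis to each term converts this to $\Delta_{h_2'}^{k-2}\Delta_{h_1}^1 f(x+h_k) - \Delta_{h_2'}^{k-2}\Delta_{h_1}^1 f(x)$, which is by definition $\Delta_{h_2}^{k-1}(\Delta_{h_1}^1 f)(x) = \Delta_{h_2}^{k-1}\Delta_{h_1}^1 f(x)$. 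One should note that the key structural fact making the induction go through is that the recursion defining $\Delta^k$ peels off the \emph{last} coordinate $h_k$, while the claim peels off the \emph{first} coordinate $h_1$ into a $\Delta^1$; these two operations commute precisely because subtracting over $x\mapsto x+h_k$ and subtracting over $x\mapsto x+h_1$ are commuting operations on functions, which is really just associativity and commutativity of addition in $\R$.

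There is no genuine obstacle here — the statement is flagged in the excerpt as following "easily by applying induction to $k$". The only mild bookkeeping is domain tracking: one must observe at each stage that the relevant sub-difference is evaluated at points still inside $I$, which follows from the definition of $S_{I,k}$ together with the triangle inequality $\sum_i |h_i| \le k\|h\|$ (indeed $x$, $x+h_k$, and all the intermediate shifts lie in $(a, b-k\|h\|+k\|h\|) \subseteq [a,b]$, and staying strictly inside is guaranteed by $(h,x)\in S_{I,k}$). Since the paper explicitly leaves the details to the reader, I would present only the base cases and the one-line inductive steps for both parts, as above, and omit the domain verification beyond a parenthetical remark.
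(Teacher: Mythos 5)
Your proof is correct and follows exactly the route the paper indicates: the paper itself gives no written proof of Lemma~\ref{lem:kthdiff}, stating only that it ``follows easily by applying induction to $k$'' and leaving the details to the reader, and your induction on $k$ (with the inductive step for (1) peeling off the last coordinate $h_k$ via the defining recursion and applying the hypothesis to each of the two resulting terms) is the intended argument. The domain bookkeeping you mention is also handled correctly, since $(h,x)\in S_{I,k}$ with $h\in\R^k_{\geq 0}$ guarantees all intermediate shifts stay in $I$.
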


\begin{defn} We say \textbf{$H_k^f$ holds on $J$} if either
\begin{itemize}
\item $\Delta^k_{(h,\dots,h)} f(x) \geq 0$ for all $(x,h)\in J\times \R_{>0}$ with $((h,\dots,h),x) \in S_{J,k}$ or
\item $\Delta^k_{(h,\dots,h)} f(x) \leq 0$ for all $(x,h)\in J \times \R_{>0}$ with $((h,\dots,h),x) \in S_{J,k}$.
\end{itemize}
\end{defn}

\noindent As in \cite{LasStein} our proof of Theorem~\ref{thm:A-one-Var} is based on the following theorem of Boas and Widder.

\begin{fact}[{\cite[Theorem]{BoasWidder}}]\label{fact:boaswidder} Let $f : I \to \R$ be continuous and $k\geq 2$. If $H^f_k$ holds on $I$, then $f^{(k-2)}$ exists and is continuous on $I$.
\end{fact}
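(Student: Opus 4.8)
The plan is to reduce, step by step, the number of "generalized" differences we need to control and then appeal to the classical one-variable result. The statement to prove is: if $f : I \to \R$ is continuous, $k \geq 2$, and $H_k^f$ holds on $I$ (that is, the $k$-th difference $\Delta_{(h,\dots,h)}^k f(x)$ has constant sign for all admissible $x,h$), then $f^{(k-2)}$ exists and is continuous on $I$. Since this is quoted as a known theorem of Boas and Widder, the "proof" here is really to record the reference and to indicate how the statement as phrased matches the classical formulation; but if a self-contained argument is wanted, I would proceed as follows.

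First I would recall the classical Boas--Widder theorem in its original form: if $g$ is continuous on an interval and its $k$-th forward difference $\Delta_h^k g(x) = \sum_{j=0}^k (-1)^{k-j}\binom{k}{j} g(x+jh)$ has one sign for all $x$ and all sufficiently small $h > 0$, then $g \in C^{k-2}$ on the interior of the interval. The key observation is that $\Delta_{(h,\dots,h)}^k f(x)$, the generalized $k$-th difference of the excerpt evaluated at the constant tuple $(h,\dots,h)$, is exactly this classical $k$-th forward difference with step $h$: this follows by a trivial induction on $k$ from the defining recursion $\Delta_h^k f(x) = \Delta_{(h_1,\dots,h_{k-1})}^{k-1} f(x+h_k) - \Delta_{(h_1,\dots,h_{k-1})}^{k-1} f(x)$, taking all $h_i$ equal. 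Hence $H_k^f$ on $I$ is literally the hypothesis of the classical theorem, and the conclusion $f^{(k-2)} \in C^0(I)$ is literally its conclusion.

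If instead a proof from scratch is desired, the standard route is: (i) handle $k=2$ directly --- $H_2^f$ says $f$ is midpoint-convex or midpoint-concave, and a continuous midpoint-convex function is convex, hence $f^{(0)}=f$ is continuous, which is immediate here; (ii) for the inductive step, show that if $H_k^f$ holds then on any compact subinterval one can integrate: define $F(x) = \int_a^x f$, observe via the identity $\Delta_h^{k+1} F(x) = \int_0^h \Delta_t^k \big(\text{shift}\big)\,dt$-type manipulation that $H_{k+1}^F$ holds, so by induction $F^{(k-1)} = f^{(k-2)}$ is continuous; the genuine analytic content is a Marchaud-type inequality bounding lower-order differences by the $k$-th difference plus oscillation terms, which is what forces the intermediate derivatives to exist. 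I expect the main obstacle, were we to reprove it, to be exactly this last point --- producing the uniform estimates that upgrade "the top difference has a sign" to "the function is $C^{k-2}$" --- but since Boas and Widder's theorem is available to us, the only thing that must actually be checked is the bookkeeping identity $\Delta_{(h,\dots,h)}^k f = \Delta_h^k f$ (classical $k$-th difference), after which Fact~\ref{fact:boaswidder} is just a restatement.
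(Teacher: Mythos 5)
Your proposal is correct and matches the paper exactly: the paper gives no proof of this fact, citing it directly as the theorem of Boas and Widder, and the only verification genuinely required is the one you carry out, namely that $\Delta^k_{(h,\dots,h)}f(x)$ agrees with the classical $k$-th forward difference $\sum_{j=0}^{k}(-1)^{k-j}\binom{k}{j}f(x+jh)$, so that $H^f_k$ is literally the hypothesis of the cited theorem (with the sign ambiguity absorbed by replacing $f$ with $-f$). Your optional from-scratch sketch is not needed and is not attempted by the paper either.
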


\noindent Before proving Theorem~\ref{thm:A-one-Var} we establish Lemma~\ref{lem:sumargument}. Loosely speaking, it states that in order to show that the generalized $k$-th difference is non-negative on a given set, it is enough to prove that the $k$-th difference is non-negative on a subset whose projection onto the first coordinate is a sequence set.

\begin{lem}\label{lem:sumargument} Let $f : J \to \R$ be a continuous definable function and $D$ be a definable sequence set.
If $\Delta_{(d,h)}^k f(x) \geq 0$ for all $((d,h),x)\in S_{J,k} \cap (D \times \R^{k-1})\times J$, then $\Delta^k_u f(x) \geq 0$ for all $(u,x) \in S_{J,k}$.
\end{lem}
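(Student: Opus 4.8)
The plan is to reduce the statement, via a density argument together with the continuity of $(u,x)\mapsto\Delta^k_uf(x)$ on $S_{J,k}$, to the hypothesis applied only to generalized differences all of whose steps are arbitrarily small and whose first step is taken inside $D$; the bridge between an arbitrary small‑step difference and one whose first step lies in $D$ is the subdivision (additivity) property of generalized differences. First I would collect the elementary facts. From Lemma~\ref{lem:kthdiff} one gets the subdivision identity $\Delta^k_{(v+v',h)}f(x)=\Delta^k_{(v,h)}f(x)+\Delta^k_{(v',h)}f(x+v)$ for $h\in\R^{k-1}$, valid whenever all values of $f$ occurring are defined; since $\Delta^k_u$ is symmetric in the coordinates of $u$ (again immediate from Lemma~\ref{lem:kthdiff}), subdivision is available in every coordinate, and iterating gives
\[
\Delta^k_{(M_1\epsilon_1,\dots,M_k\epsilon_k)}f(x)=\sum \Delta^k_{(\epsilon_1,\dots,\epsilon_k)}f\Big(x+\sum_{l=1}^k j_l\epsilon_l\Big),
\]
the sum taken over all $(j_1,\dots,j_k)$ with $0\le j_l\le M_l-1$, and valid as long as every argument of $f$ that appears stays in $J$. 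Also $\Delta^k_uf\equiv0$ as soon as some coordinate of $u$ vanishes, so the inequality to be proved is trivial in that case and we may assume $u_1,\dots,u_k>0$. Finally $(u,x)\mapsto\Delta^k_uf(x)$ is continuous on $S_{J,k}$, because $\Delta^k_uf(x)$ is a fixed finite linear combination of the values $f(x+\sum_{l\in S}u_l)$, $S\subseteq\{1,\dots,k\}$, all of which lie in $J$ and depend continuously on $(u,x)$.

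Next comes the reduction. Writing $J=(a,b)$, I claim that the set
\[
T:=\Big\{(u,x)\in S_{J,k}: u_1=Md\text{ for some }M\in\N,\ d\in D,\ d<\tfrac1k\big(b-x-\textstyle\sum_l u_l\big)\Big\}
\]
is dense in $S_{J,k}$. Indeed $b-x-\sum_l u_l\ge b-x-k\|u\|>0$ on $S_{J,k}$; given a target in $S_{J,k}$ with first coordinate $u_1>0$, choose $d\in D$ arbitrarily small (possible since $0\in\cl(D)$ and $D\subseteq\R_{>0}$), put $M=\lfloor u_1/d\rfloor\ge1$, and replace $u_1$ by $Md\le u_1$; this stays in $S_{J,k}$, moves the point by less than $d$, and for $d$ small enough satisfies the displayed inequality. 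Since $\{(u,x)\in S_{J,k}:\Delta^k_uf(x)\ge0\}$ is closed in $S_{J,k}$, it suffices to verify the inequality on $T$.

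Finally, the main computation. Fix $(u,x_0)\in T$ with $u_1=M_1d_0$, $d_0\in D$, and $d_0<\eta:=\tfrac1k(b-x_0-\sum_l u_l)$. Since the remaining $u_l$ are positive, choose integers $M_2,\dots,M_k$ so large that $\epsilon_l:=u_l/M_l<\eta$ for $l\ge2$, and set $\epsilon_1:=d_0$. Expanding $\Delta^k_uf(x_0)=\Delta^k_{(M_1\epsilon_1,\dots,M_k\epsilon_k)}f(x_0)$ by the iterated subdivision identity, each summand has the form $\Delta^k_{(\epsilon_1,\dots,\epsilon_k)}f(p)$ with $p=x_0+\sum_l j_l\epsilon_l$ and $0\le j_l\le M_l-1$; hence $a<x_0\le p$ and
\[
p+k\max_l\epsilon_l\ <\ x_0+\sum_l u_l-\sum_l\epsilon_l+k\eta\ <\ x_0+\sum_l u_l+k\eta\ =\ b,
\]
so $\big((\epsilon_1,\dots,\epsilon_k),p\big)\in S_{J,k}$, and since $\epsilon_1=d_0\in D$ the hypothesis yields $\Delta^k_{(\epsilon_1,\dots,\epsilon_k)}f(p)\ge0$. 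Summing over the tuples $(j_1,\dots,j_k)$ gives $\Delta^k_uf(x_0)\ge0$, as required.

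I expect the one genuine subtlety to be the realization that one must subdivide \emph{all} $k$ coordinates simultaneously, rather than merely iterate the hypothesis in the first coordinate with the others held fixed: the latter breaks down near $\partial J$, since the suitability condition $x+k\|u\|<b$ leaves no room to run the hypothesis along a long chain of steps when one of the remaining step sizes is comparable to $\|u\|$. What rescues the multi‑coordinate subdivision is the trivial bound $\sum_l u_l\le k\max_l u_l=k\|u\|$, which guarantees that the fully fragmented difference still has every argument of $f$ inside $J$; everything else is bookkeeping.
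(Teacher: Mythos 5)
Your proof is correct, and at the decisive step it diverges from the paper's argument in a way worth spelling out. Both proofs reduce, via continuity of $(u,x)\mapsto\Delta^k_uf(x)$ and openness of $S_{J,k}$, to verifying the inequality on a dense subset, and both approximate the first coordinate $u_1$ by (sums of) elements of $D$. The paper then keeps $u_2=(u_2,\dots,u_k)$ fixed and telescopes only in the first coordinate: writing $u_1\approx\sum_{i=1}^nd_i$ with $d_i\in D$, it inducts on $j$ via
\[
\Delta^k_{(\sum_{i\le j}d_i,\,u_2)}f(x)=\Delta^k_{(d_j,u_2)}f\Big(x+\sum_{i<j}d_i\Big)+\Delta^k_{(\sum_{i<j}d_i,\,u_2)}f(x),
\]
applying the hypothesis to each translated tuple $(d_j,u_2,x+\sum_{i<j}d_i)$, which it asserts lies in $S_{J,k}$ ``immediately'' because $(\sum_{i\le j}d_i,u_2,x)$ does. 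Your worry about exactly this point is well founded: with the $\ell_\infty$ norm in the definition of suitability, that implication fails when $\|u_2\|\ge\sum_{i\le j}d_i$, since one then needs $x+\sum_{i<j}d_i+k\|u_2\|<b$ while the given membership only yields $x+k\|u_2\|<b$ (e.g.\ for $J=(0,10)$ and $k=2$ one has $(1,4.4,1)\in S_{J,2}$ but $(0.1,4.4,1.9)\notin S_{J,2}$). Your remedy --- subdividing all $k$ coordinates into pieces smaller than $\tfrac1k(b-x-\sum_lu_l)$, with the first piece taken in $D$, so that every fragment of the multi-index expansion is automatically $(J,k)$-suitable --- is correct and in fact repairs this weak point of the published proof (the lemma itself is true; only the paper's justification of that one step is deficient). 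The price of your route is the extra bookkeeping of the symmetry of $\Delta^k_u$ and the $\prod_lM_l$-term expansion, which the single-coordinate telescoping avoids. The individual steps of your argument --- the subdivision identity from Lemma~\ref{lem:kthdiff}, the vanishing of $\Delta^k_uf$ when a coordinate of $u$ is zero, the density of your set $T$, and the estimate $p+k\max_l\epsilon_l<b$ --- all check out.
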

\begin{proof}
By continuity of $f$ and openness of $S_{J,k}$, it is enough to show that $\{ (u,x) \in S_{J,k} \ : \ \Delta^k_u f(x) \geq 0\}$ is dense in $S_{J,k}$. Let $U\subseteq S_{J,k}$ be open. Let $(u_1,u_2,x) \in U$, where $u_1\in \R$ and $u_2 \in \R^{k-1}$. Because $D$ is a sequence set, there are $n\in \N$ and  $d_1,\dots,d_n\in D$ such that $(\sum_{i=1}^n d_i,u_2,x)\in U$. It is left to show the following claim: For every $j \in \{1,\dots, n\}$, $(\sum_{i=1}^j d_i,u_2,x)\in S_{J,k}$ and $\Delta^k_{(\sum_{i=1}^j d_i,u_2)} f(x) \geq 0$.\newline

\noindent First observe that since $\sum_{i=1}^j d_i < \sum_{i=1}^n d_i$ and $(\sum_{i=1}^n d_i,u_2,x) \in S_{J,k}$, we have $(\sum_{i=1}^j d_i,u_2,x)\in S_{J,k}$. We now show the second statement of the claim by applying induction to $j$. For $j=1$, $\Delta^k_{(d_1,u_2)} f(x) \geq 0$ by our assumptions on $D$. So now let $j>1$ and suppose $\Delta^k_{(\sum_{i=1}^{j-1} d_i,u_2)} f(x) \geq 0$. Since $(\sum_{i=1}^{j} d_i,u_2,x) \in S_{J,k}$, it follows immediately that
$(d_j,u_2,x+\sum_{i=1}^{j-1} d_i) \in S_{J,k}$. Thus $\Delta^k_{(d_j,u_2)}f(x +  \sum_{i=1}^{j-1} d_i) \geq 0$ by our assumption on $D$.  Applying Lemma \ref{lem:kthdiff} and using our induction hypothesis we obtain
\begin{align*}
\Delta^k_{(\sum_{i=1}^j d_i,u_2)}& f(x) = \Delta^{k-1}_{u_2} \Delta^1_{\sum_{i=1}^j d_i} f(x) =  \Delta^{k-1}_{u_2} \left( f\left(x+\sum_{i=1}^j d_i\right)- f(x)\right)\\
=&  \Delta^{k-1}_{u_2} \left( f\left(x+\sum_{i=1}^j d_1\right) - f\left(x+\sum_{i=1}^{j-1} d_i\right)+ f\left(x+\sum_{i=1}^{j-1} d_i\right) - f(x)\right)\\
=&  \Delta^{k-1}_{u_2} \left( \Delta^1_{d_j} f\left(x+\sum_{i=1}^{j-1} d_i\right)+ \Delta^1_{\sum_{i=1}^{j-1} d_i}f(x)\right)\\
=&  \Delta^{k-1}_{u_2} \Delta^1_{d_j} f\left(x+\sum_{i=1}^{j-1} d_i\right)+ \Delta^{k-1}_{u_2}\Delta^1_{\sum_{i=1}^{j-1} d_i}f(x)\\
=& \Delta^k_{(d_j,u_2)}f\left(x +  \sum_{i=1}^{j-1} d_i\right)  + \Delta^k_{(\sum_{i=1}^{j-1} d_i,u_2)} f(x) \geq 0.
\end{align*}
\end{proof}

\noindent We are now ready to prove the following stronger version of Theorem~\ref{thm:A-one-Var}. It states that the open dense set on which the continuous function $f$ is $C^k$, can be defined uniformly in the parameters that define $f$.
We will use the stronger form to prove the multivariable case of Theorem B.

\begin{theorem}\label{thm:uniform}
Let $Z\subseteq \R^n$ be definable, let $(I_z)_{z\in Z}$ be a definable family of bounded open intervals, and let $(f_z : I_z \to \R)_{z \in Z}$ be a definable family of continuous functions. Then there is a definable family $(U_z)_{z\in Z}$ of open dense subsets of $I_z$ such that $f_z$ is $C^k$ on $U_z$ for every $z\in Z$.
\end{theorem}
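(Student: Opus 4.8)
The plan is to take, for each $z$,
$U_z := \Int\{\, x \in I_z : f_z \text{ is } C^k \text{ on some open neighborhood of } x \,\}$.
Being $C^k$ on an open interval is expressible by a single first-order formula applied uniformly to $z$, so $(U_z)_{z\in Z}$ is a definable family of open subsets of $I_z$, and the theorem reduces to showing that $U_z$ is dense in $I_z$ for every $z$ --- equivalently, that every open subinterval of $I_z$ contains an open subinterval on which $f_z$ is $C^k$. I would run this as an induction on $k$ (the case $k=0$ being trivial), and it is precisely this induction that forces one to work with families: in the inductive step one first obtains, by the hypothesis for $k-1$, a dense open definable $V_z$ on which $f_z$ is $C^{k-1}$, and then applies the case $k'=1$ of the theorem to the definable family consisting of $f_z^{(k-1)}$ restricted to each connected component of $V_z$; the union of the resulting dense open sets is the required $U_z$. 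Thus the whole statement follows once we prove the case $k=1$ for families, but it is convenient to carry a general $k$ through the argument.

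So fix $k$, set $m=k+2\geq 2$, fix $z$, write $f=f_z$, and fix an open subinterval $J_0=(c,d)$ of $I_z$; by Fact~\ref{fact:boaswidder} it suffices to produce an open $J\subseteq J_0$ on which $H^f_m$ holds. Let $R:=\{(x,h)\in\R\times\R_{>0}: c<x<x+mh<d\}$; this is a convex, hence connected, open definable set, and by the remark following the definition of $H^f_k$, $H^f_m$ holds on an open $J=(c',d')\subseteq J_0$ exactly when $\Delta^m_{(h,\dots,h)}f(x)$ is either everywhere $\geq 0$ or everywhere $\leq 0$ on the (again convex) set of $(x,h)$ with $c'<x<x+mh<d'$. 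Consider the zero set $E:=\{(x,h)\in R:\Delta^m_{(h,\dots,h)}f(x)=0\}$, which is closed in the open definable set $R$ and therefore $D_\Sigma$ by Fact~\ref{fact:basicdsig}; hence its projection $\pi(E)$ to the first coordinate is $D_\Sigma$, and by SBCT (Fact~\ref{thm:sbct}) it has interior or is nowhere dense. If $\pi(E)$ is nowhere dense, choose a subinterval $J\subseteq J_0$ disjoint from it; then $\Delta^m_{(h,\dots,h)}f$ is continuous and nowhere zero on the connected region over $J$, hence of constant sign there, so $H^f_m$ holds on $J$.

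The hard case, where $\pi(E)$ has interior, is the main obstacle, and this is where generalized differences and Lemma~\ref{lem:sumargument} enter. Passing to a subinterval, assume for every $x$ in it some valid $h$ gives $\Delta^m_{(h,\dots,h)}f(x)=0$; rather than the diagonal differences I would study the full generalized difference $(u,x)\mapsto\Delta^m_u f(x)$, using the factorization $\Delta^m_{(d,\bar h)}f=\Delta^{m-1}_{\bar h}(\Delta^1_d f)$ from Lemma~\ref{lem:kthdiff}. Split on the dichotomy noted after the definition of sequence set: if every bounded nowhere dense definable subset of $\R$ is finite, then the relevant definable sets with empty interior are finite and an argument close to that of Laskowski and Steinhorn~\cite{LasStein} applies; otherwise $\Cal R$ defines a sequence set $D$. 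In the latter situation I would analyze, for each $d\in D$, the $D_\Sigma$ zero set of $x\mapsto\Delta^m_{(d,\bar h)}f(x)=\Delta^{m-1}_{\bar h}(\Delta^1_d f)(x)$ over the $x$-axis. If its projection is nowhere dense for all $d\in D$, then after deleting a nowhere dense exceptional set via Lemma~\ref{lem:unionomega} and using connectedness as above one obtains a subinterval and a definable sign $\sigma(d)\in\{+,-\}$ of $\Delta^m_{(d,\bar h)}f$ valid for all $\bar h$; one of the definable pieces $\sigma^{-1}(+),\sigma^{-1}(-)$ still accumulates at $0$ and is therefore itself a sequence set, and applying Lemma~\ref{lem:sumargument} to $f$ or to $-f$ on that subinterval yields $H^f_m$. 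If instead that projection has interior for some $d$, then the $(m-1)$-st difference of the auxiliary function $\Delta^1_d f$ misbehaves, and since $m-1=(k-1)+2$ and $(\Delta^1_d f)_{(z,d)}$ is a definable family of continuous functions, one invokes the inductive hypothesis for $k-1$ there; Lemma~\ref{finiteline} is used to pass from control near the diagonal to a clean subinterval. Coordinating these sub-cases to land on a single $J\subseteq J_0$ with $H^f_m$ is the delicate point.

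Finally, uniformity is automatic: the constructions above --- the canonical $U_z$, the subintervals produced at each step, the sign function $\sigma$, and the invocations of SBCT, of $D_\Sigma$-selection (Fact~\ref{thm:selection}) where needed, of the classical Baire category theorem, and of Lemmas~\ref{lem:sumargument}, \ref{lem:unionomega}, and \ref{finiteline} --- all go through for definable families indexed by $z\in Z$, so the resulting family $(U_z)_{z\in Z}$ is definable and each $f_z$ is $C^k$ on $U_z$.
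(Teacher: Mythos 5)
You have the right engine (Fact~\ref{fact:boaswidder} with $m=k+2$) and the right supporting cast (SBCT, Lemmas~\ref{lem:sumargument}, \ref{lem:unionomega}, \ref{finiteline}, and the sequence-set dichotomy), but your case analysis is organized around the wrong invariant, and the cases that actually matter are not closed. Since Boas--Widder only needs a \emph{weak} inequality, the set to control is not the zero set $E$ of $\Delta^m_{(h,\dots,h)}f$ but the set where the sign fails to be locally constant; in the paper's proof this is $W_z=S_z\setminus(\Int V_{1,z}\cup\Int V_{2,z})$, which is \emph{automatically} nowhere dense, so no dichotomy on it is required. With your dichotomy, already an affine $f$ (for which $\Delta^m f\equiv 0$ once $m\geq 2$) lands in your ``hard case,'' where your sign $\sigma(d)$ is not produced by the ``continuous and nowhere zero, hence of constant sign'' reasoning. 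More seriously, in the sub-case where the zero set of $(x,\bar h)\mapsto\Delta^{m-1}_{\bar h}(\Delta^1_d f)(x)$ projects onto a set with interior, your proposed resolution --- invoke the statement for $k-1$ on $\Delta^1_d f$ --- concludes nothing: generic smoothness of $\Delta^1_d f$, or generically constant sign of its $(m-1)$-st differences, is entirely compatible with a large zero set, and smoothness of $f(\cdot+d)-f(\cdot)$ does not transfer back to smoothness of $f$. The paper avoids this trap by taking the \emph{sign} condition on the full generalized difference $\Delta^k_u f$, for all increment tuples $u$, as the inductive statement and inducting on the order of the difference (base case: the weak monotonicity theorem); the inductive step combines the exceptional sets over $d\in D$ via Lemma~\ref{lem:unionomega}, pigeonholes $D$ onto an infinite definable subset of constant sign, and then Lemma~\ref{lem:sumargument} upgrades the inequality from first increments in that sequence set to arbitrary real first increments.

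The non-sequence-set case is dispatched in your sketch with ``an argument close to Laskowski--Steinhorn applies,'' but this is exactly where the paper needs the $\DSig$ dimension theory in an essential way: $W_z$ is $\DSig$, nowhere dense, hence of dimension at most $1$; the set $Y_z$ of $x$ with $\dim(W_z)_x\geq 1$ has dimension $0$; off $\cl(Y_z)$ the fibers $(W_z)_x$ are nowhere dense and therefore \emph{finite} (this is precisely where the hypothesis that every bounded nowhere dense definable subset of $\R$ is finite enters); only then does Lemma~\ref{finiteline} yield boxes $(x-\delta,x+\delta)\times(0,\epsilon)$ avoiding $W_z$, on which connectedness forces a constant sign. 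None of this appears in your proposal, and the Laskowski--Steinhorn cell-decomposition argument it gestures at is unavailable here --- the paper remarks that the absence of cell decomposition is the very reason non-diagonal generalized differences are needed. Minor points: your opening reduction of the family statement to $k=1$ via successive derivatives is plausible but is then abandoned, and your canonical choice $U_z=\Int\{x\in I_z: f_z\text{ is locally }C^k\text{ at }x\}$ is acceptable as far as definability of the family goes.
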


\begin{proof}
For $z\in Z$, let $a_z,b_z\in \R$ be such that $I_z=(a_z,b_z)$. We show that for every $k\in \N$ there is a definable family $(U_z)$ of open dense subsets of $I_z$ such that $f_z$ is $C^k$ on $U_z$ for each $z\in Z$.\newline

\noindent We first treat the case when $\mathcal{R}$ defines a sequence set $D$. By Fact \ref{fact:boaswidder} it is enough to show that for every $k\in \N$ there is a definable family $(U_z)_{z\in Z}$ of open dense subsets of $I_z$ such that for every $z\in Z$ and for every connected component $J$ of $U_z$
\begin{itemize}
\item $\Delta^k_h f_z(x)\geq 0$ for all $(h,x)\in S_{J,k}$, or
\item $\Delta^k_h f_z(x)\leq 0$ for all $(h,x)\in S_{J,k}$.
\end{itemize}
We proceed by induction on $k$. The case $k=1$ follows easily from Fact \ref{fact:weakmonot}.
\newline

\noindent Let $k>1$. Observe that for every $z\in Z$ and $d\in D$, $\Delta_{d,h}^k f_z = \Delta_{h}^{k-1}  \Delta_{d}^{1} f_z$ and that $\Delta_d^1 f_z$ is defined on the interval $(a_z,b_z-d)$. By the induction hypothesis there is a definable family $(U_{z,d})_{
(z,d) \in Z\times D}$ of dense open subsets of $(a_z,b_z-d)$ such that for each connected component $J$ of $U_{z,d}$, either
\begin{itemize}
\item $\Delta_{h}^{k-1} \Delta_{d}^1 f_z(x) \geq 0$ for all $(h,x) \in S_{J,k-1}$, or 
\item $\Delta_{h}^{k-1} \Delta_{d}^1 f_z(x) \leq 0$ for all $(h,x) \in S_{J,k-1}$. 
\end{itemize}
For $(z,d) \in Z \times D$ set
\[
X_{z,d} := \Big((a,b-d) \setminus U_{z,d}\Big) \cup \{b-d\}.
\]
By Lemma \ref{lem:unionomega} the set $\bigcup_{d\in D} X_{z,d}$ is nowhere dense for each $z\in Z$. Set 
\[
U_z := I_z \setminus \cl \left(\bigcup_{d\in D} X_{z,d}\right).
\] 
Observe that $(U_z)_{z\in Z}$ is a definable family of dense open subsets of $I_z$. \newline

\noindent Let $z\in Z$ and let $J$ be a connected component of $U_z$. Then for each $d \in D$, either
\begin{itemize}
\item[(i)] $(a,b-d) \cap J = \emptyset$ or
\item[(ii)] $J\subseteq (a,b-d)$ and one of the following is true:
\begin{itemize}
\item[(a)] $\Delta_{h}^{k-1} \Delta_{d}^1 f_z(x) \geq 0$ for all $(h,x) \in S_{J,k-1}$, or
\item[(b)] $\Delta_{h}^{k-1} \Delta_{d}^1 f_z(x) \leq 0$ for all $(h,x) \in S_{J,k-1}$.
\end{itemize}
\end{itemize}
Since $D$ is a sequence set, there are infinitely many $d\in D$ for which (ii) holds. Denote the set all such $d\in D$ by $D'$.
Let
\[
D'':= \{ d \in D' \ : \ \Delta_{h}^{k-1} \Delta_{d}^1 f(x) \geq 0 \hbox{ for all } (h,x) \in S_{J,k-1}\}.
\]
 Then either $D'\setminus D''$ is infinite or $D''$ is infinite. Suppose $D''$ is infinite. We now want to show that $\Delta^k_u f_z(x)\geq 0$ for all $(u,x)\in S_{J,k}$. By Lemma \ref{lem:sumargument} it is enough to show that $\Delta_{d,h}^k f_z(x) \geq 0$ for all $((d,h),x)\in S_{J,k} \cap (D''\times \R^{k-1})\times J$.
Let $(d,h,x) \in S_{J,k}\cap (D'' \times \R^{k-1})\times J$. By definition of $S_{J,k}$, we get that $x + k\|(d,h)\|\in J$. Thus $x +(k-1)\|h\| \in J$ and hence $(h,x) \in S_{J,k-1}$. Since $d\in D''$, we get
\[
\Delta_{d,h}^k f_z(x)=\Delta_{h}^{k-1} \Delta_{d}^1 f_z(x)\geq 0.
\]
The case when $D' \setminus D''$ is infinite may be handled similarly.\newline

\noindent We now suppose that $\Cal R$ does not define a sequence set. By \cite[Lemma 3.2]{HW-Monadic} every bounded nowhere dense definable subset of $\R$ is finite. Set
\[
S_z:=\{ (x,h) \in I_z \times \R_{>0} \ : \ (h,\dots,h,x) \in S_{I_z,k+2}\}.
\]
and
\begin{align*}
V_{1,z} &:=  \{ (x,h) \in S_z \ : \  \Delta^{k+2}_{(h,\dots,h)} f_z(x)\geq 0 \}\\
V_{2,z} &:=  \{ (x,h) \in S_z \ : \  \Delta^{k+2}_{(h,\dots,h)} f_z(x)\leq 0 \}.
\end{align*}
Observe that $S_z$ is open and both $V_{1,z}$ and $V_{2,z}$ are closed in $S_z$. Let 
\[
W_z:=S_z\setminus \left(\Int V_{1,z} \cup \Int V_{2,z}\right).
\]
Then $W_z$ is $\DSig$ for each $z\in Z$. Since $W_z\subseteq (V_{1,z}\setminus \Int V_{1,z})\cup (V_{2,z}\setminus \Int V_{2,z})$, we have that $W_z$ is nowhere dense and therefore has no interior.
It follows immediately from \cite[Fact 2.9(1) \& Proposition 5.7]{FHW-Compact}
that $\dim W_z \leq 1$.
Let $\pi : \R \times \R_{>0} \to R$ be the coordinate projection onto the first coordinate.
Consider
\[
Y_z := \{ x \in I \ : \ \dim (W_z)_x = 1\}.
\]
By \cite[Fact 2.14(2)]{FHW-Compact} the set $Y_z$ is $\DSig$ for each $z\in Z$. By \cite[Theorem 3]{FHW-Compact}, we get that $\dim Y_z = 0$. Hence $Y_z$ is nowhere dense. Now let
$U_z$ be the complement of $\cl(Y_z)$ in $I_z$. From the definition of $Y_z$ we obtain that $\dim (W_z)_x =0$ for all $x \in U_z$. In particular, each $(W_z)_x$ is nowhere dense and hence finite.
Consider
\[
V_z := \{ x \in U_z \ : \ \forall \delta,\epsilon >0 \ (x-\delta,x+\delta)\times (0,\epsilon) \cap W_z \neq \emptyset\}.
\]
We will show that $V_z$ is nowhere dense for each $z\in Z$. Suppose $J$ is an open subinterval of $I_z$ in which $V$ is dense. Observe that $(J \times \R_{>0})\cap W_z$ is $\DSig$. Applying Lemma \ref{finiteline} to this set we get a subinterval $J'\subseteq J$ and an $\epsilon > 0$ such that $J' \times (0,\epsilon)$ is disjoint from $W_z$. This contradicts the density of $V$ in $J$. Thus $V$ is nowhere dense.\newline

\noindent Let $U'_z$ be the complement of $\cl(V_z)$. It is left to show that for the each $z\in Z$, the function $f_z$ is $C^k$ on $U'_z$. Let $x\in U'$. As $x\notin V_z$, there are $\delta,\epsilon >0$ such that $(x-\delta,x+\delta)\times (0,\epsilon) \cap W_z = \emptyset$. It follows from connectedness that $(x-\delta,x+\delta)\times (0,\epsilon)$ is contained in $\Int(V_{1,z})$ or $\Int(V_{2,z})$. If necessary decrease $\delta$ so that $2\delta < (k+2)\epsilon$. Then it is easy to check that
$H^{f_z}_{k+2}$ holds on $(x-\delta,x+\delta)$. By Fact \ref{fact:boaswidder} the function $f_z$ is $C^{k}$ on $(x-\delta,x+\delta)$.
\end{proof}

\section{Proof of Theorem A}

In this section we will prove Theorem A. For the convenience of the reader we first recall the statement of the theorem.

\begin{thmA}
Suppose $\Cal R$ is type A.
The following are equivalent:
\begin{enumerate}
\item $\Cal R$ is field-type,
    \item there is a $\DSig$ field $(X,\oplus,\otimes)$ with $\dim X > 0$,
    \item there is a $\DSig$ family $(A_x)_{x \in B}$ of subsets of $\R^n$ such that $\dim B \geq 2$, each $A_x$ is one-dimensional, and $A_x \cap A_y$ is zero-dimensional for distinct $x,y \in B$,
    \item there is a definable open $U \subseteq \R^m$ and a $\DSig$ function $f : U \to \R^n$ that is nowhere locally affine,
    \item there is a $\DSig$ function $f : I \to \R$ that is nowhere locally affine.
\end{enumerate}
\end{thmA}

\noindent We complete the proof in several steps.
We first establish that $(4)$ implies $(5)$.
In fact, we prove the following more general result that does not require the assumption that $\Cal R$ is field-type.

\begin{lem}\label{lem:multivar}
If every continuous definable $f : I \to \R$ is generically locally affine, then every continuous definable $f : U \to \R^m$, where $U \subseteq \R^k$ is open, is generically locally affine.
\end{lem}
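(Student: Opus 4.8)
The plan is to reduce at once to $m=1$ and then induct on $k$, the case $k=1$ being the hypothesis. For general $m$, write $f=(f_1,\dots,f_m)$; each $f_j:B\to\R$ is continuous and definable, so once the case $m=1$ is in hand each $f_j$ is locally affine on a dense definable open $U_j\subseteq B$, and then $f$ is locally affine on the dense definable open set $\bigcap_j U_j$. I also record a consequence of the hypothesis that will close the argument: $\Cal R$ defines no strictly convex function $I\to\R$. Indeed, a strictly convex definable function is not repetitious by Lemma~\ref{lem:notweakperiod}, hence, as $\Cal R$ is type A, not generically locally affine by Lemma~\ref{lem:typeArepeat}, contradicting the hypothesis.

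For the inductive step, suppose toward a contradiction that some continuous definable $f:B\to\R$ on a box $B\subseteq\R^k$ is not generically locally affine. The set of points at which $f$ is affine on some neighbourhood is definable and open, so, not being dense, it misses a subbox; restricting, I may assume $f$ is nowhere locally affine on $B=J_1\times\dots\times J_k$. Fix a coordinate $i$. For each choice of the other coordinates, the restriction of $f$ to the corresponding $x_i$-line is a continuous definable function on $J_i$, hence generically locally affine by the hypothesis. Let $L_i\subseteq B$ be the set of $p$ such that $f$ is affine in the $x_i$-direction on some neighbourhood of $p_i$ in the $x_i$-axis. Then $L_i$ is definable, and it is the projection (forgetting a parameter $\epsilon$) of the intersection of an open set (given by $\epsilon>0$ and $(p_i-\epsilon,p_i+\epsilon)\subseteq J_i$) with the closed set on which the restriction of $f$ to the $x_i$-line through $p$ is affine on $(p_i-\epsilon,p_i+\epsilon)$; hence $L_i$ is $D_\Sigma$ by Fact~\ref{fact:basicdsig}. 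Every $x_i$-fibre of $L_i$ is dense and open in $J_i$, hence of positive one-dimensional measure, so by Fubini $L_i$ has positive $k$-dimensional measure and is therefore not nowhere dense (recall that in a type A expansion nowhere dense definable sets are Lebesgue null). By SBCT (Fact~\ref{thm:sbct}) $L_i$ has interior, and passing to a subbox inside $L_i$ I may assume $f$ is affine in the $x_i$-direction at every point of $B$; since each $J_i$ is an interval, this forces $f$ to be affine in $x_i$ on all of $B$. Carrying this out successively for $i=1,\dots,k$ — each restriction preserving the pointwise properties already secured — I reach a subbox, still called $B$, on which $f$ is affine in each variable separately while still being nowhere locally affine.

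A continuous function on a box that is affine in each variable separately is a multiaffine polynomial $f(x)=\sum_{S\subseteq\{1,\dots,k\}}c_S\prod_{i\in S}x_i$ with real constant coefficients, by descending induction on the number of variables; the $c_S$ are iterated difference quotients of $f$ and hence definable. Since $f$ is not affine, some $c_S$ with $|S|\ge 2$ is nonzero. For a pair $\{i,j\}$, let $A_{ij}(\hat x):=\sum_{T\subseteq\{1,\dots,k\}\setminus\{i,j\}}c_{\{i,j\}\cup T}\prod_{l\in T}x_l$ be the coefficient of $x_ix_j$ in $f$; not all the $A_{ij}$ vanish identically, for otherwise every $c_S$ with $|S|\ge 2$ would vanish. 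Fix a pair with $A_{ij}\not\equiv 0$; the zero set of the nonzero multiaffine polynomial $A_{ij}$ is nowhere dense, so it does not contain the corresponding product of intervals, and we may fix a point $\hat c$ there at which $A_{ij}(\hat c)\ne 0$. Then $g(s,t):=f(\dots,s,\dots,t,\dots,\hat c)$, with $s$ and $t$ in coordinates $i$ and $j$ and $\hat c$ in the remaining ones, is a definable biaffine function $g(s,t)=Ast+Bs+Ct+D$ with $A=A_{ij}(\hat c)\ne 0$. Hence $s\mapsto g(s,s)=As^2+(B+C)s+D$ is a definable $C^2$ function with nonvanishing second derivative, so it or its negative is strictly convex — contradicting the consequence of the hypothesis recorded at the outset. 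This completes the induction and the proof.

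I expect the middle paragraph to be the crux: its content is that "generically locally affine along every fibre in direction $i$" can be upgraded, one direction at a time, to "locally affine on an open box", and this is exactly what the $D_\Sigma$-ness of the direction-$i$ locally-affine locus together with SBCT (and the type A measure dichotomy, used only to rule out the nowhere-dense alternative) provides. Everything afterwards is elementary: the structure theory of multiaffine polynomials, and the already-established fact that a type A expansion meeting the hypothesis defines no strictly convex function.
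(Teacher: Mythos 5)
Your proof is essentially correct but diverges from the paper's argument in its second half, and the comparison is instructive. Both proofs reduce to $m=1$, and both begin with the same key step: the locus where $f$ is locally affine along a fixed coordinate direction is a $D_\Sigma$ set (via the midpoint criterion of Fact~\ref{fact:afffine2}) whose fibres over the complementary coordinates are dense and open, so SBCT yields a subbox on which $f$ is affine in that variable. (Your Fubini/null-set detour here is unnecessary: dense fibres already make $L_i$ dense in $B$, and SBCT applies directly.) The paper carries this out for the last coordinate only, writes $f_x(t)=\alpha(x)t+\beta(x)$, shows $\alpha$ is constant by inverting it with $D_\Sigma$-selection and the intermediate value theorem to manufacture a definable unary function equal to $t^2+\cdots$, and then recurses on $\beta$ --- a genuine induction on $k$. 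You instead iterate the SBCT step over all $k$ coordinates to get separate affineness on a single subbox, invoke the elementary fact that a separately affine function is a multiaffine polynomial, and restrict to a diagonal to produce a definable quadratic, hence a strictly convex definable function. Your route avoids $D_\Sigma$-selection and the IVT entirely, and it is not actually an induction on $k$ (the inductive hypothesis is never invoked; everything follows from the unary case), which is arguably cleaner. Two small repairs are needed. First, $s\mapsto g(s,s)$ only makes sense if the diagonal meets $J_i\times J_j$, which it need not; use $s\mapsto g(s,s+c)$ for a suitable constant $c$ (still a definable quadratic with leading coefficient $A_{ij}(\hat c)\neq 0$), or translate the box first. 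Second, your argument that the hypothesis excludes strictly convex definable functions via repetitiousness and Lemma~\ref{lem:typeArepeat} works but is roundabout: a strictly convex function is non-affine on every subinterval, hence nowhere locally affine, which contradicts the hypothesis directly.
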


\noindent Now Lemma~\ref{lem:multivar} and Theorem~\ref{thm:dsig-cont} give that $(4)$ implies $(5)$. For the proof Lemma~\ref{lem:multivar} we need the following basic fact from analysis.


\begin{fact}\label{fact:afffine2}
A continuous $f : J \to \R$ is affine if and only if
$$ \frac{ f(x) + f(y) }{2} = f \left( \frac{ x + y }{2} \right) \quad \text{for all } x,y \in J.$$
\end{fact}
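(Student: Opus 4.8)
The plan is to prove only the nontrivial implication; the forward direction is immediate, since for $g(t)=\alpha t+\beta$ both sides of the displayed identity equal $\alpha\tfrac{x+y}{2}+\beta$.

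For the converse, suppose $g$ is continuous and satisfies Jensen's equation $\tfrac{g(x)+g(y)}{2}=g\!\left(\tfrac{x+y}{2}\right)$ for all $x,y\in J$. First I would isolate the following sublemma: if $h:[a,b]\to\R$ is continuous, satisfies the same identity on $[a,b]$, and $h(a)=h(b)=0$, then $h$ vanishes identically. To prove it, one shows by induction on $n$ that $h$ vanishes at every point of the form $a+\tfrac{k}{2^{n}}(b-a)$ with $0\le k\le 2^{n}$: the base case $n=0$ is the hypothesis, and the inductive step holds because the midpoint of two consecutive level-$n$ points carries $h$-value equal to the average of the two (vanishing) $h$-values at those points. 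Since these dyadic points are dense in $[a,b]$ and $h$ is continuous, $h\equiv 0$ on $[a,b]$.

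Next I would deduce the global statement. Fix $p<q$ in $J$ and let $\ell$ be the affine function with $\ell(p)=g(p)$ and $\ell(q)=g(q)$. Given any $x\in J$, choose a closed subinterval $[a,b]\subseteq J$ containing $p$, $q$, and $x$ (possible because $J$ is an interval), and let $\ell_{a,b}$ be the affine function agreeing with $g$ at $a$ and $b$. Then $h:=g-\ell_{a,b}$ is continuous on $[a,b]$, satisfies Jensen's equation there (both $g$ and the affine $\ell_{a,b}$ do, and the equation is preserved under differences), and vanishes at $a$ and $b$; hence $g=\ell_{a,b}$ on $[a,b]$ by the sublemma. In particular $\ell_{a,b}$ agrees with $g$, and therefore with $\ell$, at the two distinct points $p$ and $q$, so $\ell_{a,b}=\ell$; thus $g(x)=\ell(x)$. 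As $x\in J$ was arbitrary, $g=\ell$ on $J$, i.e.\ $g$ is affine.

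I expect no real obstacle here: this is the classical analysis of Jensen's functional equation under a continuity hypothesis. The only point requiring a little care is the final bookkeeping step, namely checking that the affine interpolants obtained on different subintervals agree globally, which is handled by the elementary fact that an affine function is determined by its values at any two distinct points.
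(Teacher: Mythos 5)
Your proof is correct. The paper states this as a ``basic fact from analysis'' and gives no proof at all, so there is nothing to compare against; your argument is the standard resolution of Jensen's functional equation under a continuity hypothesis (reduce to a function vanishing at the endpoints, propagate the zero set through the dyadic rationals by induction, and conclude by density and continuity), which is precisely the justification the authors are implicitly invoking. The one step worth double-checking --- that for $p,q,x \in J$ one can find a closed interval $[a,b] \subseteq J$ containing all three --- is immediate since $J$ is an interval: take $a = \min(p,q,x)$ and $b = \max(p,q,x)$.
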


\noindent We also need a selection theorem for locally closed sets. A set $X\subseteq \R^n$ is \textbf{locally closed} if for every point $x\in X $ there is an open set $U\subseteq \R^n$ containing $x$ such that $X\cap U$ is closed in $U$. Given $C \subseteq \R^n$ and $p \in \R^n$, we let
\[
d(p,C) := \inf \{ \| p - q \| : q \in C \} .
\]
We also let $B_n(q,r)$ be the open ball in $\R^n$ with center $q$ and radius $r > 0$.

\begin{lem}
\label{lem:select}
Let $A \subseteq \R^m \times \R^n$ be definable such that $A_p$ is locally closed for all $p \in \R^m$.
Let $\pi$  be the coordinate projection $\R^m \times \R^n \to \R^m$.
Then there is a definable function $g : \pi(A) \to \R^n$ such that $(p,g(p)) \in A$ for all $p \in \pi(A)$.
\end{lem}

\begin{proof}
We first reduce to the case when $A_p$ is bounded for every $p\in \R^m$.
Let $f : \pi(A) \to \R$ be given by
$$ f(p) = \inf\{ r \in \R_{>0} \ : \ B_n(0,r) \cap A_p \neq \emptyset \} + 1.  $$
Then $\{ B_n(0,f(p)) \cap A_p : p \in \pi(A) \}$ is a definable family of nonempty bounded locally closed sets.
So we may assume that each $A_p$ is bounded.
For each $p \in \pi(A)$ let $W_p$ be the union of all open boxes $B$ of diameter at most one in $\R^n$ such that $B \cap A_p$ is closed in $B$.
Then $\{ W_p : p \in \pi(A) \}$ is a definable family of bounded open sets such that $A_p \subseteq W_p$ and $A_p$ is closed in $W_p$ for all $p \in \pi(A)$.
Let $C$ be the set of $(p,q) \in \R^m \times \R^n$ such that $(p,q) \in A$ and
$$ d(q, \R^n \setminus W_p) = \max \{ d(x,\R^n \setminus W_p) : x \in A_p \}. $$
It is easy to see that $C$ is definable and each $C_p$ is nonempty and compact. Let $g:\pi(A) \to \R^n$ be the function that maps $p \in \pi(A)$ to the lexicographically minimal element of $C_p$.
It is easy to check that $g$ is definable and that $(p,g(p)) \in A$ for all $p \in \pi(A)$.
\end{proof}

\begin{proof}[Proof of Lemma~\ref{lem:multivar}]
Let 
\[
f(x) = (f_1(x),\ldots,f_m(x)) \quad \text{for all  } x \in \R^k.
\]
Suppose that for each $1 \leq i \leq m$ there is an open dense definable subset $U_i$ of $U$ on which $f_i$ is affine.
Then $f$ is affine on $U_1 \cap \ldots \cap U_m$. Thus without loss of generality, we can assume that $m = 1$.
\newline

\noindent
We apply induction on $k$.
The base case $k = 1$ holds by assumption.
Let $B := I_1 \times \ldots \times I_k$ be a box contained in $U$.
We show that there is a nonempty box contained in $B$ on which $f$ is affine. The argument goes through uniformly and therefore shows that $f$ is locally affine on a dense open subset of $U$.
Let $B' = I_1 \times \ldots \times I_{k-1}$ and let $\pi : B \to B'$ be the projection away from the last coordinate. Define $f_x(t) := f(x,t)$ for all $(x,t) \in B' \times I_k$.
For each $\delta > 0$ we define $E_{\delta}$ to be the set of all $(z,t) \in B' \times I_k$ such that $(t - \delta, t + \delta)$ is a subset of $I_k$ on which $f_z$ is affine.
Note that $E_\delta \subseteq E_{\delta'}$ when $\delta' < \delta$.
By Fact~\ref{fact:afffine2} the restriction $f_z$ to $(t - \delta, t + \delta)$ is affine if and only if
$$   \frac{ f_z(x) + f_z(y) }{2} = f_z \left( \frac{ x + y }{2} \right) \quad \text{for all } x,y \in (t - \delta, t + \delta).$$
Continuity of $f$ therefore implies each $E_\delta$ is closed.
Let $E$ be $\bigcup_{\delta > 0} E_\delta$.
Then $E$ is $F_\sigma$ and $E$ is the set of $(z,t) \in B' \times I_k$ such that $f_z$ is locally affine at $t$.
By our assumption $E$ contains a dense open subset of $\{z\} \times I_k$ for all $z \in B'$.
An application of Fact~\ref{fact:KU} shows that $E$ has interior.
After shrinking $B$ if necessary, we can assume that $f_z$ is affine on $I_k$ for all $z \in B'$. \newline

\noindent
Let $\alpha,\beta : B' \to \R$ be such that $f_x(t) = \alpha(x)t + \beta(x)$ for all $x \in B'$ and $t \in I_k$.
We first show that $\alpha$ is constant in $x$. Suppose not.
Let $q \in \Q_{>0}$ and $y,y' \in I_k$ be such that $y' - y = q$.
Note that
$$ \alpha(z) = q^{-1}[ f_z(y') - f_z(y) ] \quad \text{for all  }  z \in B'. $$
Thus $\alpha$ is definable and continuous. Since $\alpha$ is non-constant, the intermediate value theorem yields a nonempty open interval $L$ contained in the range of $\alpha$. \newline

\noindent By continuity of $\alpha$  the set $\{ z \in B' : \alpha(z) = s \}$ is closed in $B'$ for all $s \in L$.
Applying Lemma~\ref{lem:select} we obtain a definable $g : L \to B'$ such that $\alpha(g(s)) = s$ for all $s \in L$.
So $f_{g(s)}$ has slope $s$ for all $s \in L$. \newline

\noindent
Let $r \in I_k$, $r' \in L$, and $\delta > 0$ be such that for all $r - \delta < t < r + \delta$ we have $t \in I_k$ and $t + (r' - r) \in L$.
Let $h : (r - \delta, r + \delta) \to \R$ be given by
$$ h(t) = f_{g(t + r' -r)}(t) - f_{g(t + r' - r)}(r). $$
Then for all $r - \delta < t < r + \delta$ we have
\begin{align*}
    h(t) =&\, [ (t + r' -r)(t) + \beta(g(t+r'-r))] \\
    &\quad -[ (t + r' -r)(r) + \beta(g(t+r'-r))]\\
    =&\, t^2 + t(r'-2r) - r'r + r^2.
\end{align*}
So $h$ is definable and nowhere locally affine.
Contradiction.\newline

\noindent
We have shown that $\alpha$ is constant on $B'$. Let $a\in \R$ be such that $\alpha(z) = a$ for all $z \in B'$.
Therefore $f_x(t) = a t + \beta(x)$ for all $(x,t) \in B' \times I_k$.
Because $\beta(x) = f_x(t) - a x$ for all $(x,t) \in B' \times I_k$, $\beta$ is definable and continuous on $B'$.
By our induction hypothesis, $\beta$ is affine on a box contained in $B'$.
So after shrinking $B'$ if necessary we may assume that $\beta$ is affine on $B'$.
Thus $f$ is affine on $B' \times I_k$.
\end{proof}

\begin{proof}[Proof of Theorem A]
It is clear that $(1)$ implies $(2)$. \newline

\noindent $(2) \Rightarrow (3):$ Let $(X,\oplus,\otimes)$ be a $\DSig$-field such that $\dim X > 0$ and $X$ is a subset of $\R^n$.
Applying Corollary~\ref{cor:to-selection} we obtain a nonempty open interval $I$ and a continuous definable injection $g : I \to X$.
For $(a,a') \in I^2$ we set
$$ A_{(a,a')} := \{ (b,b') \in I \times X : b' = \big( g(a) \otimes g(b) \big) \oplus g(a') \}. $$
Observe that each $A_{(a,a')}$ is the graph of the function $I \to \R^{n}$ given by
$$ x \mapsto [g(a) \otimes g(x)] \oplus g(a'). $$ This function is injective, because $(X,\oplus,\otimes)$ is a field.  By Theorem~\ref{fact:dim-bijection} we know that $A_{(a,a')}$ is one-dimensional for every $(a,a') \in I^2$.
Thus $(A_{(a,a')})_{(a,a') \in I^2}$ is a definable family of one-dimensional subsets of $\R^{n+1}$ with $\dim I\times I=2$. \newline

\noindent Let $(a_1,a_1')$ and $(a_2,a_2')$ be distinct elements of $I^2$. It is left to show that $A_{(a_1,a_1')}\cap A_{(a_2,a_2')}$ is finite. Let $(b,b') \in A_{(a_1,a_1')}\cap A_{(a_2,a_2')}$. Then 
\[
\big(g(a_1') \otimes g(b) \big) \oplus g(a_1') = \big(g(a_2) \otimes g(b)\big) \oplus g(a_2')
\]
Since $(X,\oplus,\otimes)$ is a field and $g$ is injective, there is at most one such $b$. Thus $A_{(a,a')} \cap A_{(b,b')}$ contains at most one element. \newline

\noindent $(3) \Rightarrow (4):$ Let $(A_x)_{x \in B\subseteq \R^l}$ be a $\DSig$ family of subsets of $\R^n$ such that $\dim B \geq 2$, $\dim A_x = 1$ for all $x \in B$, and $\dim A_x \cap A_y = 0$ for distinct $x,y \in B$. Towards a contradiction, suppose that $(4)$ fails.\newline

\noindent Let $A = \{ (x,y) \in B \times \R^n : y \in A_x \}.$ We first show that we assume that $B$ is an open subset of $\R^2$.
By Corollary~\ref{cor:to-selection} there is a nonempty definable open $V \subseteq \R^2$ and a continuous definable injection $g : V \to B$.
Set
\[
A' := \{ (p,q) \in V \times \R^n : (g(p),q) \in A \}.
\]
Then $A'_{p} = A_{g(p)}$ for all $p \in V$.
Because $A'$ is the preimage of $A$ under a continuous definable map, $A'$ is $\DSig$ by Fact~\ref{fact:basicdsig}.
After replacing $A$ with $A'$ and $B$ with $V$, we may assume that $B$ is an open subset of $\R^2$.\newline

\noindent Let $\pi_k : \R^n \to \R$ be the projection onto the $k$-th coordinate for $k=1,\dots,n$.
As each $A_x$ is one-dimensional, Fact~\ref{thm:factsaboutdsig} shows that for all $x \in B$ there is $1 \leq k \leq n$ such that $\pi_k(A_x)$ has interior.
For $k=1,\dots,n$, we set
\[
A_k := \{ x \in B :  \dim \pi_k(A_x) \geq 1 \}.
\]
It follows from \cite[Fact 2.14]{FHW-Compact} that $A_k$ is $D_\Sigma$ for $1 \leq k \leq n$.
Since $B=\bigcup_{k=1}^n A_k$, there is $k$ such that $A_k$ is somewhere dense in $B$.
By SBCT there is $k$ such that $A_k$ has interior. Let us assume that $A_1$ has interior. 
The case that $A_k$ has interior for $k$ with $2 \leq k \leq n$, can be handled similarly.
After replacing $B$ with a definable nonempty open subset of $A_1$ we may suppose that $\dim \pi_1(A_x)\geq 1$ for every $x\in B$.
Let $\rho : \R^{n+2} \to \R^3$ be the projection onto the first three coordinates. Note that $\rho(A)_x = \pi_1(A_x)$ for all $x \in \R^2$.
Thus $\dim \rho(A)_x\geq 1$ for all $x \in B$. Since $\rho(A)$ is $\DSig$, we know that $\dim \rho (A) = 3$ by \cite[Theorem F(3)]{FHW-Compact}.
Thus $\rho(A)$ has interior by Fact~\ref{thm:factsaboutdsig}.\newline

\noindent Let $I,J,L \subseteq \R$ be nonempty open intervals such that $I \times J \times L \subseteq \rho(A)$.
Thus for all $(x,y,z) \in I \times J \times L$ there is an $u \in \R^{n-1}$ such that $(z,u) \in A_{(x,y)}$.
Applying $D_\Sigma$-selection and replacing $I,J,L$ with smaller nonempty open intervals if necessary, we obtain a continuous definable $f : I \times J \times L \to \R$ such that $(z, f(x,y,z)) \in A_{(x,y)}$ for all $(x,y,z) \in I \times J \times L$. By our assumption that (4) fails, we can find open subset $U\subseteq I\times J \times L$ on which $f$ is affine. Replacing $I,J,L$ with even smaller nonempty open intervals, we can assume that $f$ is affine on $I\times J \times L$. Now fix linear $h_1,h_2,h_3 : \R \to \R^{n-1}$ and $\beta \in \R^{n-1}$ such that
$$ f(a_1,a_2,a_3) = h_1(a_1) + h_2(a_2) + h_3(a_3) + \beta \quad \text{for all}  \quad (a_1,a_2,a_3) \in I \times J \times L. $$
Fix $(u,v) \in I \times J$ and $u' \in I$ such that $u' \neq u$.
Let $v' \in J$ satisfy 
\[
 h_1(u')  +h_2(v') = h_1(u) + h_2(v).
 \]
Then $f(u,v,t) = f(u',v',t)$ for all $t \in L$.
So $\{ (t,f(u,v,t)) : t \in L \}$ is a subset of $A_{(u,v)} \cap A_{(u',v')}$.
We attain a contradiction as $A_{(u,v)} \cap A_{(u',v')}$ is zero-dimensional. \newline

\noindent Lemma~\ref{lem:multivar} shows that $(4)$ implies $(5)$.
It remains to establish that $(5)$ implies $(1)$.
Suppose $f : I \to \R$ is $\DSig$ and nowhere locally affine.
After applying the one variable case of Theorem B and shrinking $I$ if necessary, we may assume that $f$ is $C^2$.
Because $f$ is non-affine, $\Cal R$ is field-type by Fact~\ref{fact:c2}.
\end{proof}

\section{Theorem B for multivariable functions}
\noindent
We assume that $\mathcal{R}$ is type A throughout this section. The goal is the proof of Theorem B for multivariable functions.  We begin with an outline of this proof.
By Theorem~\ref{thm:dsig-cont} it suffices to prove Theorem B for continuous definable functions. Let $f : U \to \R^n$ be continuous and definable and 
$U\subseteq \R^m$ open. We first show that generically all partial derivatives of $f$ exist. However, in order to prove continuity of these derivatives, we have to invoke Theorem A. Indeed, by Theorem A, if $\Cal R$ is not field-type, then $f$ is generically locally affine and hence generically $C^\infty$. Therefore it suffices to treat the case when $\Cal R$ is field-type. We further reduce this case to the case that $\Cal R$ is actually an expansion of $(\R,<,+,\cdot).$ In this situation, we can use the definability of the partial derivatives of $f$ to show that these derivatives are continuous almost everywhere.

\begin{lem}
\label{lem:multivariable0}
Let $k \geq 1$, let $U \subseteq \R^n$ be an open definable set, let $f : U \to \R$ be a continuous definable function, and let $i$ be such that $1 \leq i \leq n$.
Then there is a dense open definable set $V \subseteq U$ such that $\frac{\partial^{k} f}{\partial x_i}(p)$ exists for all $p \in V$.
\end{lem}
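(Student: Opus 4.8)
The plan is to reduce to the one-variable case (Theorem~C) applied to the fibres of $f$ in the $x_i$-direction, and then to glue the resulting fibrewise-good sets into a single \emph{open} dense definable set by an SBCT argument of the kind used in the proof of Theorem~\ref{thm:uniform}. After permuting coordinates I would assume $i=n$ and write points of $\R^n$ as $(z,t)$ with $z\in\R^{n-1}$, $t\in\R$, setting $f_z(t):=f(z,t)$ whenever defined. Since the union of a definable family of open sets is open and definable, and $U$ is the union of the definable family of all bounded open boxes $W$ with $\cl(W)\subseteq U$, it suffices to find, for each such $W=B'\times(c,d)$ with $B'\subseteq\R^{n-1}$ a box, a definable open dense $V_W\subseteq W$ on which $\partial^k f/\partial x_n$ exists; then $V:=\bigcup_W V_W$ is as required. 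Fix such a $W$ and set $m:=k+2$.

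Let $P$ be the set of $(z,t)\in W$ such that $H^{f_z}_m$ holds on some open subinterval of $(c,d)$ containing $t$; concretely $P=\bigcup_{\delta>0}Q_\delta$, where $Q_\delta$ consists of the $(z,t)\in W$ with $(t-\delta,t+\delta)\subseteq(c,d)$ on which $H^{f_z}_m$ holds. Continuity of $f$ makes each $Q_\delta$ closed in $W$, the family $\{Q_\delta:\delta>0\}$ is increasing as $\delta\downarrow 0$, and intersecting the $Q_\delta$ with the members of a definable compact exhaustion $\{K_\rho:\rho>0\}$ of $W$ and invoking Fact~\ref{fact:basicdsig} exhibits $P=\bigcup_{\rho,\delta}(K_\rho\cap Q_\delta)$ as a union of a suitably monotone definable family of compact sets, so $P$ is $\DSig$. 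If $(z,t)\in P$, then $H^{f_z}_m$ holds on a neighbourhood of $t$, so by the Boas--Widder theorem (Fact~\ref{fact:boaswidder}) $f_z^{(m-2)}=f_z^{(k)}$ exists on that neighbourhood; in particular $\frac{\partial^k f}{\partial x_n}(z,t)$ exists. Hence $P$ is contained in the set where $\partial^k f/\partial x_n$ exists.

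Next I would show $P$ is dense in $W$. Fix $z\in B'$. By Theorem~C the function $f_z$ is $C^m$ on a dense open subset of $(c,d)$. On a connected component of that subset on which $f_z^{(m)}$ vanishes identically, $f_z$ is a polynomial of degree $<m$, so $\Delta^m_{(h,\dots,h)}f_z\equiv 0$ and $H^{f_z}_m$ holds there; near any point where $f_z^{(m)}\neq 0$ the derivative $f_z^{(m)}$ has constant sign, and a function whose $m$-th derivative has constant sign on an interval has $m$-th finite differences of that sign on it (mean value theorem for finite differences), so $H^{f_z}_m$ holds on a small subinterval. Thus $\{t:(z,t)\in P\}$ is dense in $(c,d)$ for every $z\in B'$, so $P$ is dense in $W$.

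Finally, for every subbox $B''\subseteq W$ the set $P\cap B''$ is $\DSig$ by Fact~\ref{fact:basicdsig} and dense in $B''$, hence has nonempty interior by SBCT (Fact~\ref{thm:sbct}); so $\Int(P)$ is dense in $W$, and $V_W:=\Int(P)$ is open, definable, dense in $W$, and contained in $P$, which gives existence of $\partial^k f/\partial x_n$ at each of its points. I expect the main obstacle to be the verification that $P$ is $\DSig$: one must check that continuity of $f$ genuinely forces $Q_\delta$ to be closed in $W$ (the $(x,h)$-region over which the defining inequality is quantified moves with $(z,t)$, so the boundary cases require care), and then assemble the $\delta$-union together with the compact exhaustion of $W$ into the monotone-family form demanded by the definition of $\DSig$. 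The remaining steps are routine applications of Theorem~C, Boas--Widder, SBCT, and the elementary finite-difference fact used in the density step.
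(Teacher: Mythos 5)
Your proof is correct and follows essentially the same route as the paper's: reduce to boxes with closure in $U$, form the $\DSig$ set of points at which $H^{f_z}_{k+2}$ holds on a neighbourhood in the $x_i$-direction, apply Boas--Widder (Fact~\ref{fact:boaswidder}) to get existence of the $k$-th partial there, and upgrade fibrewise density to a dense interior via SBCT. The only (harmless) divergence is the density step: the paper extracts it from the proof of Theorem~\ref{thm:uniform}, whereas you apply Theorem C fibrewise and recover the sign condition on finite differences from the mean value theorem; both work.
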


\noindent In the following proof we write $\Delta^k_h f$ for $\Delta^k_{(h,\ldots,h)} f$.

\begin{proof}
To simplify notation we suppose $i = 1$.
Let $W = I_1 \times \ldots \times I_n$
be a product of closed intervals with nonempty interior such that the closure of $W$ is contained in $U$.
We show that $\frac{\partial^{k} f}{\partial x_1}$ exists and is continuous on a dense definable  open subset of $W$.
Our argument goes through uniformly in $W$, so we obtain a dense definable open subset of $U$ on which $\frac{\partial^{k} f}{\partial x_1}$ exists and is continuous.\newline

\noindent Let $I_1 = [a,b]$ and $B = I_2 \times \ldots \times I_n$. For  $x \in B$ we define $f_x : I_1 \to \R$ to be the function given by $f_x(t) := f(t,x)$ for $t\in I_1$.
Note that $f^{(k)}_x(t) = \frac{\partial^{k} f}{\partial x_1} (t,x)$ (if either exists).
As $f$ is continuous, $\Delta^{k + 2}_h f_x(t)$ is a continuous function on the closed set 
\[
D := \{ (h,t,x) : h \geq 0, (t,x) \in W, t + kh \leq b \}.
\]
Set
\[
C_1 := \{ (h,t,x) : (h,t,x) \in D, \Delta^{k+2}_h f_x(t) \geq 0  \} 
\]
and
\[
 C_2 := \{ (h,t,x) : (h,t,x) \in D, \Delta^{k+2}_h f_x(t) \leq 0\}.
\]
Observe that $C_1$ and $C_2$ are closed definable sets.
For $i \in \{1,2\}$, set
\[
E_i := \{ (\delta, t, x) : \delta \geq 0, (t,x) \in W  \text{ and } [0,\delta] \times [t - \delta, t + \delta] \times \{x\} \subseteq C_i \}.
\]
Both $E_1$ and $E_2$ are closed definable sets.
If $\delta > 0$ and $(\delta,t,x) \in E_1$, then $\Delta^{(k+2)}_h f_x(t)$ exists and is nonnegative on $[t - \delta, t + \delta]$. Thus for such triples $(\delta,t,x)$, the $k$-derivative $f_x^{(k)}$ exists and is continuous on $[t - \delta, t + \delta]$ by Fact \ref{fact:boaswidder}.
Likewise, if $\delta > 0$ and $(\delta,t,x) \in E_2$, then $f^{(k)}_x$ exists and is continuous on $[t - \delta, t + \delta]$.\newline

\noindent Now for $i \in \{1,2\}$, set
\[
F_i := \{ (t,x) \in W : \exists \delta > 0 \quad (\delta,t,x) \in E_i \}.
\]
Note that $F_1$ and $F_2$ are $\DSig$.
Set $F := F_1 \cup F_2$. Note that $F$ is $\DSig$, and that  $f^{(k)}_x$ exists and is continuous in $t$ for every $(x,t) \in F$.
By the proof of Theorem \ref{thm:uniform}, there is a definable family $(U_x)_{x\in B}$ of open dense subsets of $I$ such that $U_x \times \{x\}\subseteq F$ for every $x \in B$. Therefore $F$ contains an open dense subset of $I \times \{x\}$ for all $x \in B$. 
As $F$ is $F_\sigma$, Fact~\ref{fact:KU} shows that $F$ contains a dense open subset of $W$.
Let $V$ be the interior of $F$.
Then $\frac{\partial^{k} f}{\partial x_1}$ exists on $V$.
\end{proof}

\noindent We now establish a lemma allowing us to reduce certain questions about definable sets and functions in field-type expansions to questions about expansions of $(\R,<,+,\cdot)$. It is crucial for our proof of Theorem B, but we anticipate further applications.

\begin{lem}
\label{lem:diffeo}
Fix $k \geq 1$.
Suppose that $\Cal R$ is field-type.
Then there is an open interval $I$, definable function $\oplus,\otimes : I^2 \to I$, an isomorphism $\tau : (I,<,\oplus,\otimes) \to (\R,<,+,\cdot)$, and $J \subseteq I$ such that the restriction of $\tau$ to $J$ is a $C^k$-diffeomorphism $J \to \tau(J)$.
\end{lem}

\begin{proof}
By Theorem A, the expansion $\Cal R$ defines a non-affine $C^2$-function $L \to \R$.
By inspection of the proof of Theorem~\ref{thm:field-type} the reader can check that we can construct an open interval $I$, definable functions $\oplus,\otimes : I^2 \to I$, an isomorphism $\tau : (I,<,\oplus,\otimes) \to (\R,<,+,\cdot)$, $J \subseteq I$, and a definable, continuously differentiable function $f : J \to \R$ such that $\tau(x) = f'(x)$ for all $x \in J$.
After applying Theorem B and shrinking $J$ if necessary, we may assume that $f$ is $C^{k+1}$ on $J$.
Thus $\tau$ is $C^k$ on $J$.
As $\tau = f'$ is strictly increasing, and $f$ is $C^2$, we also have $\tau'(x) = f''(x) > 0$ for all $x \in J$.
By inverse function theorem the inverse $\tau^{-1}$ is $C^k$ on $\tau(J)$.
Therefore $\tau$ is a $C^k$-diffeomorphism $J \to \tau(J)$.
\end{proof}

\noindent We now prove Theorem B for expansions of $(\R,<,+,\cdot)$.

\begin{lem}
\label{lem:multivariable2}
Suppose that $\Cal R$ expands $(\R,<,+,\cdot)$.
Let $U \sub \R^n$ be a definable open set, and let $f : U \to \R^m$ be a continuous definable function.
Then $f$ is $C^k$ almost everywhere for all $k \geq 1$.
\end{lem}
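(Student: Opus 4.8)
The plan is first to reduce to the case $m=1$, treating the coordinate functions of $f$ one at a time, since a finite intersection of dense open definable sets is again dense, open and definable. So assume $f:U\to\R$. The point where the hypothesis on $\Cal R$ enters is the following: since $\Cal R$ is type A and expands $(\R,<,+,\cdot)$, it cannot define $\Z$, for otherwise it would define $\Q$, which is dense in $\R$ and $\omega$-orderable, contradicting the fact that $\Cal R$ is type A. Hence by Miller's tameness dichotomy \cite{Miller-tame} (cf.\ Corollary~\ref{corollary:densecodense} for the unary case), every definable subset of $\R$ has interior or is nowhere dense, and every definable function with open domain in some $\R^{l}$ is continuous on a dense open definable subset of that domain. (Alternatively, one may appeal directly to Fornasiero \cite{F-Hausdorff}, whose treatment of the real-field case of Theorem~C already covers functions of several variables, in which case the lemma is immediate.)

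Granting this, I would prove by induction on $k$ that $f$ is $C^k$ on a dense open definable subset of $U$; the case $k=0$ is trivial. The inductive step rests on the following claim: \emph{every continuous definable $g:W\to\R$ on a definable open $W\subseteq\R^{l}$ is $C^1$ on a dense open definable subset of $W$.} To see this, apply Lemma~\ref{lem:multivariable0} with exponent $1$ to obtain, for each $i\in\{1,\dots,l\}$, a dense open definable $W_i\subseteq W$ on which $\partial g/\partial x_i$ exists; on $W':=\bigcap_i W_i$ all first partials of $g$ exist. Here we use that $\Cal R$ expands the real field, and hence has division: each $\partial g/\partial x_i$ is then a definable function on $W'$, given by the definable expression $\lim_{t\to 0}t^{-1}\bigl(g(p+te_i)-g(p)\bigr)$. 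By the generic continuity noted above, each $\partial g/\partial x_i$ is continuous on a dense open definable subset of $W'$; intersecting over $i$ yields a dense open definable $W''\subseteq W$ on which all first partials of $g$ exist and are continuous, so $g$ is $C^1$ on $W''$. This proves the claim. For the inductive step itself, assume $f$ is $C^k$ on a dense open definable $V\subseteq U$; for each multi-index $\alpha$ with $|\alpha|=k$ the function $\partial^\alpha f$ is continuous and definable on $V$, so by the claim it is $C^1$ on a dense open definable subset of $V$, and intersecting over the finitely many such $\alpha$ produces a dense open definable $V'\subseteq U$ on which every partial of $f$ of order $k+1$ exists and is continuous, i.e.\ on which $f$ is $C^{k+1}$.

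The main obstacle is the generic continuity input used in the claim. Lemma~\ref{lem:multivariable0} only provides pointwise existence of the pure iterated partials $\partial^j f/\partial x_i^j$, and these alone do not force $f$ to be $C^k$, since the mixed partials genuinely matter; one therefore needs that the several-variable partial-derivative functions produced along the way are generically continuous, and it is precisely for this that the full strength of the real-field tameness results of Miller (respectively Fornasiero) is invoked. This is also why the lemma is stated for expansions of $(\R,<,+,\cdot)$ rather than merely for structures of field-type: the reduction of the general field-type case to the present one will be carried out separately by transporting the problem across the $C^k$-diffeomorphism supplied by Lemma~\ref{lem:diffeo}.
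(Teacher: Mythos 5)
Your overall architecture---reduce to $m=1$, get pointwise existence of partials from Lemma~\ref{lem:multivariable0}, use division to see that the resulting partial derivatives are definable, then upgrade to generic continuity and intersect---is the same as the paper's, and your induction on $k$ through a $C^1$ claim applied to all partials of order $k$ is a legitimate (indeed more careful) way to assemble the pieces, since it handles the mixed partials explicitly rather than relying only on the pure iterated partials $\partial^k f/\partial x_i^k$ as the paper's proof does. However, the step on which your whole argument hinges is not justified: being type A and expanding $(\R,<,+,\cdot)$ does \emph{not} imply that every definable subset of $\R$ has interior or is nowhere dense, nor that every definable function with open domain is generically continuous. The dense pair $(\R,<,+,\cdot,\R_{\mathrm{alg}})$ is NIP and hence type A (a structure admitting a dense $\omega$-orderable set defines a copy of $(\Cal P(\N),\N,\in,+1)$ and so has IP), it does not define $\Z$, and yet it defines the dense and codense set $\R_{\mathrm{alg}}$, whose characteristic function is definable and nowhere continuous. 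The inference from ``does not define $\Z$'' to the interior-or-nowhere-dense dichotomy is exactly what fails here; Miller's Theorem 3.3 in \cite{Miller-tame} takes that dichotomy as a \emph{hypothesis} rather than deriving it (compare Corollary~\ref{corollary:densecodense}, where it appears as an equivalent condition, not a consequence of type A). The fallback to \cite{F-Hausdorff} does not close the gap either, since the result cited there is the unary case of Theorem C.

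The missing ingredient---and what the paper actually uses---is that each partial derivative, where it exists, is a \emph{pointwise limit of continuous definable functions} (difference quotients with increments tending to $0$), and that in a type A expansion such a definable Baire-class-one function is continuous on a dense open subset of its domain by \cite[Corollary 5.3]{FHW-Compact}; since the set of continuity points is definable (using $\cdot$ to quantify over $\epsilon,\delta$), that dense open set may be taken definable. If you replace your appeal to the interior-or-nowhere-dense dichotomy by this argument, your $C^1$ claim and hence your induction go through, and you recover the lemma.
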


\begin{proof}
Let $f(x) = (f_1(x),\ldots,f_m(x))$ for all $x \in \R^n$.
Suppose that for $i=1,\dots, n$ there is a dense definable open $V_i \subseteq U$ on which $f_i$ is $C^k$.
Then $f$ is $C^k$ on the dense definable open set $V_1 \cap \ldots \cap V_m$.
We therefore suppose $m = 1$.\newline

\noindent
It suffices to show that for $i=1,\dots, n$ there is a dense open definable subset $V_i$ of $U$ on which $\frac{\partial^{k} f}{\partial x_i}$ exists and is continuous. If this is true, then $f$ is $C^k$ on $V_1 \cap \ldots \cap V_n$.
Fix $i$ with $1 \leq i \leq n$.
Applying Lemma~\ref{lem:multivariable0} there is a dense definable open set $W \subseteq U$ such that $\frac{\partial^{k} f}{\partial x_i}(p)$ exists for all $p \in W$.
This is a definable function, because $\Cal R$ expands $(\R,<,+,\cdot)$.
Furthermore $\frac{\partial^{k} f}{\partial x_i}$ is a pointwise limit of a sequence of continuous functions $W \to \R$.
An application of \cite[Corollary 5.3]{FHW-Compact} shows that $\frac{\partial^{k} f}{\partial x_i}$ is continuous on a dense open subset $V$ of $W$.
Since the set of points at which $\frac{\partial^{k} f}{\partial x_i}$ is continuous is definable, we may take $V$ to be definable.
\end{proof}

\begin{proof}[Proof of Theorem B]
The case when $\Cal R$ is not field-type follows by Theorem A.
We therefore suppose $\Cal R$ is field-type.
We show that any $p \in U$ has a neighbourhood $W$ such that the restriction of $f$ to $W$ is $C^k$ almost everywhere.
This is enough as our proof is uniform in $p$.
Fix $p \in U$ for this purpose.
\newline

\noindent
Applying Theorem~\ref{lem:diffeo} we obtain an $I$, definable $\oplus,\otimes : I^2 \to I$, an isomorphism $\tau : (I,<,\oplus,\otimes) \to (\R,<,+,\cdot)$, and $J \subseteq I$ such that the restriction of $\tau$ to $J$ is a $C^k$-diffeomorphism $J \to \tau(J)$. \newline

\noindent
Let $\tau_n : I^n \to \R^n$ be given by
$$ \tau_n(x_1,\ldots,x_n) = (\tau(x_1),\ldots,\tau(x_n)) \quad \text{for all  } (x_1,\ldots,x_n) \in I^n. $$
Then $\tau_n$ restricts to a $C^k$-diffeomorphism $J^n \to \tau(J^n)$.
Let $\Cal S$ be the expansion of $(\R,<,+,\cdot)$ by all subsets of $\R^n$ of the form $\tau_n(X)$ for definable $X \subseteq I^n$.
Then $X \subseteq \R^n$ is $\Cal S$-definable if and only if $\tau^{-1}_n(X) \subseteq I^n$ is definable. A function $g : X \to \R^m$ is $\Cal S$-definable if and only if $\tau^{-1}_m \circ g \circ \tau_n : \tau^{-1}_n(X) \to \R^m$ is definable.\newline

\noindent
Let $W := \tau_n(J^n)$.
After translating $U$ if necessary we suppose $p \in W$.
After shrinking $J$ if necessary we suppose that $W \subseteq U$.
Let $g : W \to \R^m$ be given by $g(x) = (\tau_m \circ f \circ \tau^{-1}_n)(x)$ for all $x \in W$. Lemma~\ref{lem:multivariable2} shows that $g$ is $C^k$ on a dense $\Cal S$-definable open subset $V$ of $W$.
As $f = \tau^{-1}_m \circ g \circ \tau_n$, and $\tau_n$ and $\tau^{-1}_m$ are both $C^k$ this shows that $f$ is $C^k$ on $\tau^{-1}_n(U)$.
Finally $\tau^{-1}_n(U)$ is a dense open definable subset of $W$.
\end{proof}

\section{Linearity in type B expansions}

In this section, we study the linearity of type B expansions. As noted in Question \ref{qst:type-B} in the introduction, although type B expansions are not field-type, we do not know whether every continuous function $f: I \to \R$ definable in a type B expansion is generically locally affine. However, in the following two subsections, we are able to obtain results indicating strong linearity of type B structures. We hope that these results might eventually lead to an affirmative answer of Question \ref{qst:type-B}.  

\subsection{Repetitious functions}
Let $f: I \to \R$.
Recall that we say $f$ is \textbf{repetitious} if for every open subinterval $J\subseteq I$
there are $\delta > 0$, $x,y \in J$ such that $\delta < y - x$ and
\[
f(x+\epsilon) - f(x) = f(y + \epsilon) - f(y) \quad \text{for all } 0 \leq \epsilon <  \delta.
\]
We now show Theorem E, which states that if $\cal{R}$ is type B and $f$ is definable, then $f$  is repetitious. As similar result holds for linear type A structures. Observe that if $f$ is generically locally affine, then $f$ is repetitious. Thus if $\cal R$ is type A and not field type and $f$ is definable, then $f$  is repetitious by Theorem A.

\begin{lem}\label{lem:factcor}
Suppose $\Cal R$ is type B.
Let $D$ be an $\omega$-orderable set that is dense in $I$, let $f: I\to \R$ be definable and continuous, and let $J\subseteq I$ be an open interval on which $f$ is nonconstant. Then there are $d_1,d_2,d_3,d_4 \in J\cap D$ such that
$$ d_1 \neq d_2, d_3 \neq d_4 \quad{and} \quad d_1 - d_2 = f(d_3) - f(d_4). $$
\end{lem}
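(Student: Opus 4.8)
The plan is to derive the conclusion from Fact~\ref{fact:diff}. Since the three types are mutually exclusive, a type~B expansion is not type~C, so by the contrapositive of Fact~\ref{fact:diff} \emph{every} pair $C,E$ of dense $\omega$-orderable subsets of $[0,1]$ has $(C-C)\cap(E-E)\neq\{0\}$, i.e.\ shares a nonzero difference. I will produce such a pair from $D\cap J$ and from $f(D\cap J)$; a common nonzero difference $r$ then yields $d_1,d_2\in D\cap J$ with $d_1-d_2=r$ and $d_3,d_4\in D\cap J$ with $f(d_3)-f(d_4)=r$, and $r\neq 0$ forces $d_1\neq d_2$ and $d_3\neq d_4$.

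First I would shrink $J$. Since $f$ is continuous and nonconstant on the connected set $J$, it is not locally constant at some point $x_0\in J$ (otherwise the level sets of $f$ would be open and partition $J$, forcing $f$ constant, a contradiction). Using continuity of $f$ at $x_0$, choose an open interval $J_1\ni x_0$ with $\overline{J_1}\subseteq J$ that is so short that both $J_1$ and $f(J_1)$ have diameter at most $1$; then $f$ is still nonconstant on $J_1$. Put $C_0:=D\cap J_1$ and $E_0:=f(C_0)=f(D\cap J_1)$. Now $C_0$ is a definable subset of the $\omega$-orderable set $D$, hence $\omega$-orderable (restrict an $\omega$-order on $D$), and it is infinite and dense in $J_1$ because $D$ is dense in $I$. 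For $E_0$: fixing an $\omega$-order $\prec$ on $C_0$, the map $y\mapsto\min_{\prec}\{d\in C_0:f(d)=y\}$ is a definable injection $E_0\hookrightarrow C_0$, so pulling back $\prec$ makes $E_0$ $\omega$-orderable; and since $f$ is nonconstant on $J_1$, the set $f(J_1)$ is a nondegenerate interval in which $E_0$ (the image of the dense set $C_0$ under the continuous map $f$) is dense, so $E_0$ is infinite and dense. Finally, $C_0$ and $E_0$ lie in bounded intervals of diameter at most $1$, so translating by suitable reals $c,c'$ (translations being definable) gives dense $\omega$-orderable sets $C:=C_0+c\subseteq[0,1]$ and $E:=E_0+c'\subseteq[0,1]$, with $C-C=C_0-C_0$ and $E-E=E_0-E_0$.

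Now apply Fact~\ref{fact:diff} as above: since $\Cal R$ is not type~C, $(C-C)\cap(E-E)\neq\{0\}$, so there is a nonzero $r\in(C_0-C_0)\cap(E_0-E_0)$. Writing $r=d_1-d_2$ with $d_1,d_2\in C_0$ and $r=f(d_3)-f(d_4)$ with $d_3,d_4\in C_0$, all four points lie in $D\cap J_1\subseteq D\cap J$, and $d_1-d_2=f(d_3)-f(d_4)=r\neq 0$, which is exactly the assertion. No step is genuinely hard here; the only thing requiring care is the bookkeeping that simultaneously keeps both sets inside $[0,1]$, preserves their difference sets, and preserves $\omega$-orderability — in particular the verification that $f(D\cap J)$ is $\omega$-orderable. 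The conceptual content is simply the observation that Fact~\ref{fact:diff} forces $D\cap J$ and its $f$-image to share a nonzero difference.
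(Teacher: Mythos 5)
Your proposal is correct and follows essentially the same route as the paper: both apply Fact~\ref{fact:diff} to translates of $D\cap J$ and of $f(D\cap J)$ placed in a common interval, and read off a shared nonzero difference $r=d_1-d_2=f(d_3)-f(d_4)$. The only differences are bookkeeping: the paper shrinks the image interval $J'\subseteq f(J)$ obtained from the intermediate value theorem rather than shrinking $J$ itself around a point of non-local-constancy, and it leaves the $\omega$-orderability of $f(D\cap J)$ implicit, which you verify explicitly via the $\prec$-least-preimage injection.
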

\begin{proof}
Let $J\subseteq I$ be an open subinterval on which $f$ is not constant. Let $a,b\in \R$ be such that $J=(a,b)$. The intermediate value theorem yields an open interval $J'\subseteq f(J)$. Since $D$ is dense in $I$, we have that $f(D\cap J)\cap J'$ is dense in $J'$. Let $a',b' \in \R$ be such that $(a',b')=J'$. After decreasing $b'$ we assume $b'-a'<b-a$. Then both $(-a + D)\cap (0,b'-a')$ and $-a'+ \big( f(D\cap J)\cap J'\big)$ are dense $\omega$-orderable subsets of $(0,b'-a')$. Applying Fact \ref{fact:diff} to these two dense $\omega$-orderable sets, we obtain $d_1,d_2,d_3,d_4 \in J$ such that $d_1 \neq d_2$, $f(d_3)\neq f(d_4)$ and
\[
 (-a+d_1) - (-a+d_2) = -a' + f(d_3) - \big(-a' + f(d_4)\big)
\]
It follows that $d_3 \neq d_4$ and 
\[
d_1 - d_2 = (-a+d_1) - (-a+d_2) = -a' + f(d_3) - \big(-a' + f(d_4)\big) = f(d_3) - f(d_4).
\]
\end{proof}

\begin{proof}[Proof of Theorem E]
Suppose $\cal{R}$ is type B. Let $f : I \to \R$ be continuous and definable. We need to show that $f$ is repetitious.
Let $U$ be the set of $p \in I$ at which $f$ is locally constant.
Note that the restriction of $f$ to $U$ is repetitious.
Let $V$ be the interior of $I \setminus U$.
It suffices to show for every open interval $L \subseteq V$ that the restriction of $f$ to $L$ is repetitious.
We may therefore assume that there is no open subinterval of $I$ on which $f$ is constant.
Since $\Cal R$ is type B, there is a dense $\omega$-orderable subset $D$ of $I$. We declare
\[
C := \{ (d_1,d_2,d_3,d_4) \in D^4 \ : \ d_1\neq d_2, d_3 \neq d_4\}.
\]
For $d=(d_1,d_2,d_3,d_4) \in C$, let $A_d \subseteq \R$ be the set of all $t\in \R$ such that
\begin{itemize}
\item $t+d_3,t+d_4 \in I$, and
\item $d_1-d_2 = f(t+d_3) - f(t+d_4)$.
\end{itemize}
For each $d\in C$, the set $A_d$ is closed in $I$ by continuity of $f$.
 Let $s >0$ and $J$ be an open subinterval of $I$ such that $t+J \subseteq I$ for all $t \in (0,s)$. Let $r\in (0,s)$. Consider the function $g_r: I \to \R$ that maps $c\in I$ to $f(r+c)$. Applying Lemma \ref{lem:factcor} to $g_r$ we obtain a $d=(d_1,d_2,d_3,d_4) \in C$ such that:
\[
d_1-d_2 = g_r(d_3) - g_r(d_4)=f(r+d_3) - f(r+d_4).
\]
Thus $r \in A_{d}$. Therefore $(0,s) \subseteq \bigcup_{d \in C} A_d$. By the Baire Category Theorem $A_d$ has interior for some $d \in C$. Fix such a $d=(d_1,d_2,d_3,d_4)\in C$ and let $J'$ be an open interval in the interior of $A_d$. Then the function $J' \to \R$ given by $ t \mapsto f(t+d_3)-f(t+d_4)$ is constant. The statement of the theorem follows.
\end{proof}

\subsection{Weak Poles}
In this section we give more restrictions on continuous functions definable in type B expansions.
Our results apply to a more general class of expansions, those that do not admit weak poles.
A \textbf{pole} is a definable homeomorphism between a bounded and an unbounded interval.

\begin{defn}\label{def:weakpole} A \textbf{weak pole} is a definable family $\{ h_{d} : d \in E\}$ of continuous maps $h_{d} : [0,d] \to \R$ such that
\begin{itemize}
\item[(i)] $E\subseteq \R_{>0}$ is  closed in $\R_{>0}$ and $(0,\epsilon) \cap E \neq \emptyset$ for all $\epsilon >0$,
\item[(ii)] there is a $\delta>0$  such that $[0,\delta] \subseteq h_d([0,d])$ for all $d \in E$.
\end{itemize}
\end{defn}

\noindent Corollary~\ref{thm:weakpoleconseq} below shows that $\Cal R$ admits a weak pole whenever it defines a pole. To our knowledge weak poles have not been studied before. 
We first prove Theorem C, which states that if $\Cal R$ is type B, then $\Cal R$ does not define a weak pole.

\begin{proof}[Proof of Theorem C]
Towards a contradiction, suppose $\Cal R$ defines a dense $\omega$-orderable set $(D,\prec)$ and a weak pole $\{ h_{d} : d \in E\}$. Using Fact \ref{prop:htplus} we will show that $\Cal R$ is type C, contradicting our assumption that $\Cal R$ is type B. After rescaling we may assume that $D$ is dense in $[0,1]$ and $[0,1] \subseteq h_d([0,d])$ for all $d \in E$.
Set
\[
Z:=\{(a,b) \in [0,1]^2 \ : \ a < b\}.
\]
Let $\lambda : \R_{>0} \to E$ map $x$ to the maximal element of $(-\infty, x] \cap E$. Let $g: [0,1] \times Z \times D \to D$ map $(c,a,b,d)$ to
\[
 \left\{
                   \begin{array}{ll}
                     d, & \hbox{if $c-a > \lambda(b-a)$;} \\
                     \hbox{$\prec$-minimal $e\in D_{\preceq d}$ s.t. $h_{\lambda(b-a)}(c-a) - e$ is minimal}, & \hbox{otherwise.}
                   \end{array}
                 \right.
\]
We will now show that $g$ satisfies the assumptions of Fact \ref{prop:htplus}. For this, let $a,b \in Z$ and $d,e \in D$ with $e\preceq d$.
As $[0,1] \subseteq h_{\lambda(b-a)}([0,\lambda(b-a)])$, there is $z \in [0,\lambda(b-a)]$ such that $h_{\lambda(b-a)}(z)=e$. Since $D_{\preceq d}$ is finite and $h_{\lambda(b-a)}$ is continuous, there is an open interval $I$ around $z$ such that for each $y\in I$, $e$ is the only element in $D_{\preceq d}$ such that $h_{\lambda(b-a)}(y) - e$ is minimal. Let $c \in (a,b)$ be such that $c-a =z$. It follows immediately from the argument above that $g(x,a,b,d)= e$
for all $x \in (c +I)\cap (a,b)$. Thus (ii) of Fact \ref{prop:htplus} holds for our choice of $g$. Therefore $\Cal R$ is type C.
\end{proof}

\noindent We now prove several results about continuous definable functions in expansions that do not admit weak poles.
These results yield Theorem D.

\begin{prop}\label{weakpole:linearfamily}
Suppose $\Cal R$ does not admit a weak pole.
Then every definable family $\{ f_x : x \in \R^l \}$ of linear functions $[0,1] \to \R$ has only finitely many distinct elements.
\end{prop}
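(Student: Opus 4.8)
The plan is to argue by contradiction: suppose $\{f_x : x \in \R^l\}$ is a definable family of linear functions $[0,1] \to \R$ with infinitely many distinct members, and manufacture a weak pole, contradicting the hypothesis. Each $f_x$ is determined by its slope $\alpha(x)$ and intercept $\beta(x)$, and these are definable functions of $x$ (e.g.\ $\alpha(x) = f_x(1) - f_x(0)$ and $\beta(x) = f_x(0)$), so $S := \{(\alpha(x),\beta(x)) : x \in \R^l\} \subseteq \R^2$ is an infinite definable set. The first step is to extract from $S$ a definable one-parameter subfamily whose slopes or intercepts vary over an interval; since $S$ is infinite and definable, after passing to a coordinate projection one of the two coordinate projections of $S$ is infinite, hence (being a definable subset of $\R$) contains either an interval or an infinite discrete-looking piece — in any case one can find a definable injection from an interval, or from a set whose closure accumulates at $0$, into the parameter space. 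The goal is to reparametrize so that we have a definable family $\{g_d : d \in E\}$ of linear functions indexed by $E \subseteq \R_{>0}$ with $E$ closed in $\R_{>0}$, $0 \in \cl(E)$, and the domains/images arranged to satisfy Definition~\ref{def:weakpole}.

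The key geometric idea: a linear (affine) function $g : [0,d] \to \R$ with slope $s$ covers an interval of length $|s| \cdot d$ around $g(0)$. So if we have linear functions with slopes bounded away from $0$, restricting each to a domain $[0,d]$ with $d$ ranging over a set accumulating at $0$ still lets us cover a \emph{fixed} small interval $[0,\delta]$ provided the slopes are large enough to compensate — or, conversely, if the slopes are unbounded we can shrink the domains. Concretely, I would aim to produce, from the infinite family, linear maps $h_d : [0,d] \to \R$ (after translating so $h_d(0)$ is controlled) for $d$ in a suitable closed $E \subseteq \R_{>0}$ accumulating at $0$, such that a single $\delta > 0$ works in clause (ii). The cleanest route is probably: if the slopes of the family are bounded, then (after restricting the parameter to get distinct intercepts over an interval $J$ of intercepts) the maps $x \mapsto$ (the affine function with that intercept) themselves, suitably composed, give the required covering; if the slopes are unbounded, use the large slopes to cover $[0,\delta]$ from domains $[0,d]$ with $d \to 0$.

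The main obstacle I anticipate is the bookkeeping needed to pass from "infinite definable family of linear functions" to a family indexed by a \emph{closed} set $E \subseteq \R_{>0}$ accumulating at $0$, with the \emph{uniform} $\delta$ in clause (ii) — one must handle separately the case where the relevant coordinate projection of $S$ has interior (easy: directly reparametrize by an interval and rescale) versus the case where it is infinite but nowhere dense, where one needs a definable accumulation point and must take closures carefully to meet condition (i) of Definition~\ref{def:weakpole}. I expect the proof to invoke only elementary definability manipulations plus the observation that an infinite definable subset of $\R$ has an accumulation point in $\R$, together with a rescaling to place that point at $0$; no heavy machinery from the earlier sections should be required beyond the definition of weak pole itself.
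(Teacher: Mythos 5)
Your unbounded-slope case is essentially the paper's first case and works: if the set $B=\{f_x(1) : x\in\R^l\}$ of slopes is unbounded, then for each $d>0$ one definably picks a slope $\lambda\in B$ with $\lambda d\geq 1$, and the maps $h_d\colon[0,d]\to\R$, $h_d(t)=\lambda t$, cover $[0,1]$, giving a weak pole indexed by $E=\R_{>0}$. The gap is in your bounded case. First, the proposition concerns \emph{linear} (homogeneous) functions $t\mapsto\lambda t$, not affine ones; the affine version is actually false, since $\{t\mapsto c : c\in\R\}$ is an infinite definable family of affine maps in $(\R,<,+)$, which admits no weak pole. So there are no intercepts to exploit, and even granting intercepts, a family whose slopes stay bounded has oscillation $O(d)$ on a domain $[0,d]$ and so cannot cover a fixed interval $[0,\delta]$ as $d\to 0$; your sentence ``the maps \dots\ suitably composed, give the required covering'' does not describe a construction that could work.

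What is missing is the key idea for the bounded case. By Bolzano--Weierstrass the (infinite, bounded, after closure closed) slope set $B$ has an accumulation point $\mu$, and the \emph{differences} $\psi(\lambda,t)=|\mu-\lambda|\,t$ form a definable family of linear maps whose slopes $d=|\mu-\lambda|$ range over a closed definable set $C$ containing arbitrarily small positive elements. Taking the \emph{compositional inverse} of $t\mapsto dt$ yields $h_d\colon[0,d]\to[0,1]$, $h_d(t)=d^{-1}t$, which is onto $[0,1]$; the family $\{h_d : d\in C\}$ is the required weak pole. Neither the subtraction step (manufacturing arbitrarily small positive slopes from an accumulation point) nor the inversion step (turning small slopes on $[0,1]$ into large slopes on small domains) appears in your outline, and without them the bounded case does not go through. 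A minor further point: your claim that an infinite definable subset of $\R$ has an accumulation point in $\R$ is false in general (consider $\Z$); the correct dichotomy, which you do gesture at, is ``unbounded, or has an accumulation point.''
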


\begin{proof}
Let $\{ f_x   :  x \in \R^l\}$ be a definable family of linear functions $[0,1] \to \R$ that has infinitely many distinct elements.
After replacing each $f_x$ with $|f_x|$, we may assume that each $f_x$ takes nonnegative values.
Let $B = \{ f_x(1) : x \in \R^l\}$ and let $g : B \times [0,1] \to \R$ be given by $g(\lambda,t) = f_y(t)$ for any $y \in \R^l$ with $f_y(1) = \lambda$.
Note that $g$ is definable and $g(\lambda,t) = \lambda t$ for all $(\lambda, t) \in B \times [0,1]$.
We declare
$$ \tilde{g}(\lambda,t) = \lim_{\lambda' \in B, \lambda' \to \lambda} g(\lambda',t) \quad \text{for all } ( \lambda , t ) \in \cl(B) \times [0,1].$$
By continuity we have that $\tilde{g}(\lambda, t) = \lambda t$ for all $( \lambda , t ) \in \cl(B) \times [0,1]$.
After replacing $g$ by $\tilde{g}$ and $B$ by $\cl(B)$, we may suppose that $B$ is a closed and infinite subset of $\mathbb{R}_{\geq 0}$.
One of the following holds:
\begin{itemize}
\item $B$ is unbounded.
\item $B$ has an accumulation point.
\end{itemize}
First suppose that $B$ is unbounded.
Let $\{ h_d : d \in \R_{>0} \}$ be the definable family of functions $h_d : [0,d] \to \R$ given by declaring $h_d(t) = g(\lambda,t)$ where $\lambda$ is the minimal element of $B$ such that $g(\lambda,d) \geq 1$.
Then $h_d(t) \geq d^{-1}t$ for all $t \in [0,d]$. It directly follows that $\{h_d : d \in \R_{>0}\}$ is a weak pole.\newline

\noindent Now suppose $(2)$ holds. Let $\mu$ be an accumulation point of $B$.
We declare
$$ \psi(\lambda,t) := |g(\mu,t) - g(\lambda,t)| = | \mu - \lambda | t \quad \text{for all } \lambda \in B, t \in [0,1].$$
Note that $\psi$ is definable.
Set
$$C := \{ |\mu - \lambda | : \lambda \in B \} = \{ \psi(\lambda, 1) : 0 \leq \lambda \leq 1 \} . $$
 Observe that $C$ is closed,  definable, and contains arbitrarily small positive elements as $\lambda$ is an accumulation point of $B$. Let $\{ h_d : d \in C\}$ be the definable family of functions $h_d : [0,d] \to \R$ such that $h_d$ is the compositional inverse of $t \mapsto \psi(\lambda, t )$ where $\lambda \in B$ is such that $d = |\mu - \lambda| = \psi(\lambda, 1)$.
Then $h_d$ satisfies $h_d(t) = d^{-1}t$.
It follows that $\{ h_d : d \in C \}$ is a weak pole.
\end{proof}

\begin{prop}\label{prop:uniform}
Suppose $\Cal R$ does not define a weak pole.
Then every continuous definable $f : I \to \R$ is uniformly continuous.
\end{prop}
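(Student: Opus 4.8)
The plan is to prove the contrapositive: given a continuous definable $f : I \to \R$ that is \emph{not} uniformly continuous, I will exhibit a weak pole definable in $\Cal R$. First reduce to a normal form. As $I$ is a bounded open interval and $f$ is continuous, $f$ is uniformly continuous on each compact subinterval, so the failure of uniform continuity must be concentrated at one endpoint of $I$; after a translation and possibly the reflection $x\mapsto -x$ (both definable in $(\R,<,+)$) I may assume $I=(0,b)$ and that $f$ is not uniformly continuous near $0$. Spelling this out, together with uniform continuity on compacta, yields a single $\epsilon_0>0$ such that for every $r>0$ there are $x,y\in(0,r)$ with $|f(x)-f(y)|\geq\epsilon_0$; equivalently, for every $r>0$ there is a compact interval $J\subseteq(0,r)$ on which the oscillation $\omega(f;J)$ of $f$ is at least $\epsilon_0$.

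The key point is that this persists at \emph{every} small scale. Fix $\ell\in(0,b/8]$; applying the above with $r=\ell/2$ gives a short interval $[\alpha,\beta]\subseteq(0,\ell/2)$ with $\omega(f;[\alpha,\beta])\geq\epsilon_0$, and enlarging it to $[\alpha,\alpha+\ell]\subseteq(0,b/4)$ keeps the oscillation $\geq\epsilon_0$, since oscillation only grows under enlargement. Hence for each $\ell\in(0,b/8]$ the set $\{\zeta\in(0,b/4-\ell]:\omega(f;[\zeta,\zeta+\ell])\geq\epsilon_0\}$ is nonempty; since $(\zeta,\ell)\mapsto\omega(f;[\zeta,\zeta+\ell])$ is continuous on the region where $[\zeta,\zeta+\ell]\subseteq(0,b)$, this set is closed in $(0,b/4-\ell]$, so it has a maximum $\zeta_\ell$, and $\ell\mapsto\zeta_\ell$ is definable. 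Now set $h_\ell(s):=f(\zeta_\ell+s)-f(\zeta_\ell)$ for $s\in[0,\ell]$. Each $h_\ell$ is continuous with $h_\ell(0)=0$, and $h_\ell([0,\ell])=f([\zeta_\ell,\zeta_\ell+\ell])-f(\zeta_\ell)$ is an interval containing $0$ of length $\geq\epsilon_0$, so it contains $[0,\epsilon_0/2]$ or $[-\epsilon_0/2,0]$ according to a definable condition on $\ell$; letting $h_\ell':=h_\ell$ in the first case and $h_\ell':=-h_\ell$ in the second produces a definable family $\{h_\ell':\ell\in(0,b/8]\}$ of continuous maps $[0,\ell]\to\R$ with $[0,\epsilon_0/2]\subseteq h_\ell'([0,\ell])$ for all $\ell$. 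As $(0,b/8]$ is closed in $\R_{>0}$ and meets every interval $(0,\epsilon)$, this is a weak pole, contradicting the hypothesis.

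I expect the main obstacle to be organizing the construction so that the family of "small bad intervals" is genuinely a \emph{definable family} indexed by a set that is closed in $\R_{>0}$ while still accumulating at $0$; the definition of weak pole is rigid about this. The naive index set -- the set of lengths of oscillation-$\geq\epsilon_0$ subintervals of $(0,b)$ lying near $0$ -- need not be closed, because such intervals might occur only with left endpoints tending to $0$ faster than their lengths. The enlargement observation in the second paragraph is precisely what dissolves this difficulty: any short bad interval can be fattened to one of any prescribed larger length, so \emph{every} small length occurs and the index set can simply be taken to be the honest interval $(0,b/8]$. The remaining ingredients -- that a continuous function on a bounded open interval is uniformly continuous iff it is so near each endpoint, that $\omega(f;\cdot)$ varies continuously with the interval, and that the selection $\zeta_\ell$ is definable -- are routine and I would only sketch them.
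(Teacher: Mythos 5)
Your proof is correct and takes essentially the same route as the paper: both argue the contrapositive by definably selecting, at every small scale, a subinterval on which $f$ oscillates by at least a fixed $\epsilon_0$, and then obtaining a weak pole from the resulting definable family of difference functions via the intermediate value theorem. The differences are only bookkeeping --- you localize the failure to an endpoint, index the family by the length $\ell$ of the selected interval (your fattening observation makes the index set the honest interval $(0,b/8]$), and use a definable sign flip where the paper uses an absolute value --- and if anything your version conforms more literally to the required format $h_d\colon[0,d]\to\R$ of a weak pole.
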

\begin{proof}
We first treat the case when $m = 1$.
Suppose $f : I \to \R$ is continuous, definable, and not uniformly continuous.
We show that $\Cal R$ defines a weak pole.
Let $\delta > 0$ be such that for all $\epsilon > 0$ there are $t,t' \in I$ such that $|f(t) - f(t')| \geq \delta$ and $| t - t'| \leq \epsilon$.
For every $\epsilon > 0$ let
$$ A_\epsilon := \{ t \in I : |f(t) - f(t')| \geq \delta \text{   for some   } t \leq t' \leq t + \epsilon  \}.$$
Note that each $A_\epsilon$ is closed in $I$ and nonempty.
Let $p$ be a fixed element of $I$.
Let $g_0(\epsilon)$ be the maximal element of $A_\epsilon \cap (\infty,p]$ if $A_\epsilon \cap (\infty,p] \neq \emptyset$ and the minimal element of $A_\epsilon \cap [p,\infty)$ otherwise.
Note that $g_0 : \mathbb{R}_{>0} \to I$ is definable.
Let $g_1(\epsilon)$ be the least $t' \in [g_0(\epsilon),g_0(\epsilon) + \epsilon]$ such that $| f( g_0(\epsilon) ) - f(t')| \geq \delta$.
Then $g_1 : \mathbb{R}_{>0} \to I$ is definable and for all $\epsilon > 0$:
$$ 0 < g_1(\epsilon) - g_0(\epsilon)  \leq \epsilon \quad \text{and} \quad | f(g_1(\epsilon)) - f(g_0(\epsilon)) | \geq \delta.$$
We consider the definable family of functions $h_\epsilon : [0, g_1(\epsilon) - g_0(\epsilon) ] \to \mathbb{R}$ given by
$$ h_\epsilon(t) := | f( g_0(\epsilon) + t ) - f( g_0 ( \epsilon )) |.$$
Each $h_\epsilon$ is continuous. It follows from the intermediate value theorem that $[0,\delta]$ is contained in the image of every $h_\epsilon$.
Thus $\{ h_\epsilon : \epsilon \in \mathbb{R}_{>0} \}$ is a weak pole.
\end{proof}

\noindent We leave the proof of Lemma~\ref{lem:boundedabove}, an easy consequence of the triangle inequality, to the reader.
\begin{lem}\label{lem:boundedabove}
A uniformly continuous $f : I \to \R$ on a bounded open interval $I$ is bounded.
A uniformly continuous $f : \R_{>0} \to \R$ is bounded above by an affine function.
\end{lem}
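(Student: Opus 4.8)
The plan is to extract from uniform continuity a single modulus and then ``walk'' along the domain in steps of controlled size. Concretely, applying uniform continuity with $\epsilon = 1$ gives $\delta > 0$ such that $|f(x) - f(y)| < 1$ whenever $x,y$ lie in the domain and $|x - y| < \delta$. For any two points of the domain one interpolates between them by finitely many equally spaced points, each consecutive pair at distance less than $\delta$, and bounds the total variation of $f$ by the number of steps via the triangle inequality. Everything then reduces to counting steps.

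For the first statement, write $I = (a,b)$, fix a basepoint $p \in I$, and choose $N \in \N$ with $(b-a)/N < \delta$. Given $x \in I$, the $N+1$ points equally spaced between $p$ and $x$ all lie in $I$ (since $I$ is an interval containing $p$ and $x$) and consecutive ones are at distance at most $|x-p|/N < (b-a)/N < \delta$. Hence $|f(x) - f(p)| < N$, so $f$ is bounded on $I$ by $|f(p)| + N$.

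For the second statement, we may take $\delta < 1$; set $p := \delta/2$. If $0 < x \le p$ then $|x - p| \le \delta/2 < \delta$, so $f(x) < f(p) + 1$. If $x > p$, interpolate between $p$ and $x$ by $n := \lfloor (x-p)/\delta \rfloor + 1$ equally spaced points; consecutive ones are at distance $(x-p)/n < \delta$, so $|f(x) - f(p)| < n \le (x-p)/\delta + 1 < x/\delta + 1$. In either case $f(x) < x/\delta + 1 + f(p)$, so $f$ is bounded above by the affine function $x \mapsto x/\delta + 1 + f(p)$.

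There is no real obstacle here: once $\delta$ is fixed both statements are immediate. The only thing to keep an eye on is that the interpolation points stay inside the domain, which is automatic since the domain is an interval and we interpolate between two of its points. (One could alternatively deduce the second statement from the first applied to $f$ on a bounded open interval near $0$, but the single-step bound for $0 < x \le p$ above makes this unnecessary.)
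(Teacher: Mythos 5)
Your proof is correct, and it is exactly the argument the paper has in mind: the authors leave this lemma to the reader, describing it as ``an easy consequence of the triangle inequality,'' which is precisely your chaining of $\delta$-steps. The only cosmetic slip is saying ``$n$ equally spaced points'' where you mean $n$ steps (i.e.\ $n+1$ points), but the computation that follows is the right one.
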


\noindent Proposition~\ref{prop:uniform} and Lemma~\ref{lem:boundedabove} together yield Corollary~\ref{thm:weakpoleconseq}.

\begin{cor}\label{thm:weakpoleconseq}
Suppose $\Cal R$ does not admit a weak pole.
Then every continuous definable function on a bounded interval is bounded, and every continuous definable $f : \mathbb{R}_{>0} \to \mathbb{R}$ is bounded above by an affine function.
\end{cor}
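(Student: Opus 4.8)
The plan is to read the corollary off from Proposition~\ref{prop:uniform} and Lemma~\ref{lem:boundedabove}. Since $\Cal R$ admits no weak pole, Proposition~\ref{prop:uniform} tells us that every continuous definable function on a bounded open interval is uniformly continuous, and the first sentence of Lemma~\ref{lem:boundedabove} then says that such a function is bounded. That disposes of the first assertion.

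For the second assertion I would first show that every continuous definable $f\colon\R_{>0}\to\R$ is uniformly continuous, and then apply the second sentence of Lemma~\ref{lem:boundedabove}. The point requiring attention is that Proposition~\ref{prop:uniform} is stated only for \emph{bounded} intervals, and one cannot simply reduce the case of $\R_{>0}$ to it, since the pairs witnessing a failure of uniform continuity at scale $\epsilon$ may run off to infinity as $\epsilon\to 0$. Instead I would re-run the proof of Proposition~\ref{prop:uniform} with $I$ replaced by $\R_{>0}$: assuming $f$ is not uniformly continuous, fix $\delta>0$ witnessing this, put $A_\epsilon:=\{t\in\R_{>0}:|f(t)-f(t')|\geq\delta\text{ for some }t\leq t'\leq t+\epsilon\}$, fix $p\in\R_{>0}$, and let $g_0(\epsilon)$ be the maximal element of $A_\epsilon\cap(0,p]$ if that set is nonempty and the least element of $A_\epsilon\cap[p,\infty)$ otherwise. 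The only role of boundedness of the domain in the original proof is to ensure that these extrema exist; here $A_\epsilon$ is still closed in $\R_{>0}$, the set $A_\epsilon\cap(0,p]$ is bounded above (so its supremum, being $\leq p$, lies in $\R_{>0}$ and hence in $A_\epsilon$), and $A_\epsilon\cap[p,\infty)$ is a nonempty closed subset of $[p,\infty)$ (so it has a least element). The remaining construction of the functions $h_\epsilon$ and the verification that they form a weak pole is then identical to the proof of Proposition~\ref{prop:uniform}, contradicting our hypothesis on $\Cal R$. Hence $f$ is uniformly continuous, and Lemma~\ref{lem:boundedabove} bounds it above by an affine function.

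The main obstacle, then, is just this mild extension of Proposition~\ref{prop:uniform} from bounded intervals to $\R_{>0}$; once it is in place the corollary is immediate. Everything else in the argument is routine real analysis already packaged into Lemma~\ref{lem:boundedabove}.
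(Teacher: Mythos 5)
Your proposal is correct and follows the same route as the paper, which derives the corollary directly by combining Proposition~\ref{prop:uniform} with Lemma~\ref{lem:boundedabove}. The one point where you go beyond the paper is a legitimate one: since $I$ ranges over \emph{bounded} open intervals, Proposition~\ref{prop:uniform} as stated does not literally cover $f\colon\R_{>0}\to\R$, and your check that its proof carries over verbatim to $\R_{>0}$ (the extrema defining $g_0$ and $g_1$ still exist because $A_\epsilon\cap(0,p]$ is bounded above with supremum in the closed-in-$\R_{>0}$ set $A_\epsilon$, and $A_\epsilon\cap[p,\infty)$ is closed and bounded below) correctly fills a step the paper leaves implicit.
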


\begin{prop}
\label{prop:weak-pole-bndd}
Suppose $\Cal R$ does not admit a weak pole.
Suppose $W$ is a bounded definable open subset of $\R^m$.
Then any continuous definable $f : W \to \R^n$ is bounded.
\end{prop}

\begin{proof}
Let $f(x) = (f_1(x),\ldots,f_n(x))$ for all $x \in W$.
It suffices to show that each $f_i : W \to \R$ is bounded.
So we suppose $n = 1$.
Given $t > 0$ we let $A_t$ be the set of $p \in W$ such that $\| p - q \| \geq t$ for all $q \in \R^m \setminus W$.
Note that each $A_t$ is closed, as $W$ is bounded it follows that each $A_t$ is compact.
Let $r > 0$ be maximal such that $A_r$ is nonempty.
Then $A_t$ is nonempty for all $0 < t < r$.
Let $g : (0,r] \to \R$ be given by
$$ g(t) = \max \{ f(p) : p \in A_t \}.$$
It is a routine analysis exercise to show that $g$ is continuous.
Corollary~\ref{thm:weakpoleconseq} shows that $g$ is bounded.
It follows that $f$ is bounded.
\end{proof}

\section{Applications}\label{section:applications}
\subsection{Extensions of Fact~\ref{fact:c2}}
We give two extensions of Fact~\ref{fact:c2}.
The first is a multivariable version of Fact~\ref{fact:c2}.

\begin{thm}
\label{thm:c2-multi}
Let $U$ be a connected definable open subset of $\R^n$, and let $f:U \to \R^m$ be definable.
\begin{enumerate}
    \item If $f$ is $C^2$ and $\Cal R$ is not field-type, then $f$ is affine.
    \item If $f$ is $C^1$ and $\Cal R$ is neither field-type nor defines an isomorphic copy of $(\Cal P(\N), \N, \in, +, \cdot)$, then $f$ is affine.
\end{enumerate}
\end{thm}

\noindent The proof is very similar to that of Lemma~\ref{lem:multivar}, so we will omit some details.

\begin{proof}
The proof of (2) follows by Theorem F and a similar argument as the proof of (1). So we only prove (1).

\noindent Suppose that $f : U \to \R^m$ is a definable $C^2$-function and $\Cal R$ is not field-type.
Let 
\[
f(x) = (f_1(x),\ldots,f_m(x)) \quad \text{for all  } x \in \R^n.
\]
It suffices to show that $f_i$ is affine for $i=1,\dots,m$.
Thus we reduce to the case that $m = 1$. \newline

\noindent As $U$ is connected, it suffices to prove that $f$ is affine on every open box contained in $U$.
Hence, without loss of generality, we may assume that $U = I_1 \times \ldots \times I_n$ is a box, where $I_1,\dots, I_n$ are open intervals.
We proceed by induction on $n$.
The base case $n = 1$ is precisely Fact~\ref{fact:c2}.
Let $U' = I_1 \times \ldots \times I_{n-1}$ and let $\pi : U \to U'$ be the projection away from the last coordinate.
For $x\in U'$, define $f_x : I_n\to \R$ by $f_x(t) = f(x,t)$ for all $t \in I_n$.
Each $f_x$ is $C^2$, so it follows that each $f_x$ is affine. \newline 

\noindent We show that $f'_x(t)$ is constant on $U'$.
Suppose not.
Following the proof of Lemma~\ref{lem:multivar} we obtain a nonempty open interval $J$ and $r,r'$ such that
the function $h : J \to \R$ given by
$$ h(t) = t^2 +t(r'-2r) -r'r' + r^2 $$
is definable.
Then $h$ is $C^2$ and non-affine, contradiction. \newline

\noindent Fix $\lambda$ such that $f'_x(t) = \lambda$ for all $(x,t) \in U' \times I_n$.
Let $g : U' \to \R$ be such that $f_x(t) = g(x) + \lambda t$ for all $(x,t) \in U$.
Since $g(x) = f_x(t) - \lambda t$ for all $(x,t) \in B$, it follows that $g$ is definable and $C^2$.
An application of induction shows that $g$ is affine. Thus $f$ is affine as well.
\end{proof}

\noindent Recall that $f : I \to \R$ is \textbf{strictly convex} if 
$$ f\left( \frac{a+b}{2} \right) < \frac{ f(a) + f(b) }{2} \quad \text{for all distinct} \quad a,b \in I. $$
A strictly convex function is continuous.

\begin{thm}
\label{thm:convex}
Suppose $\Cal R$ defines a strictly convex function.
Then $\Cal R$ is field-type.
\end{thm}

\begin{proof}
Let $f: I \to \R$ be a strictly convex definable function. Towards a contradictions, we suppose that $\Cal R$ is not of field-type. Thus $\Cal R$ is either type A or type B.
By strict convexity we know that if $x,y \in I, \epsilon > 0$ satisfy $x + \epsilon < y$ and $y + \epsilon \in I$, then 
\[
f(x + \epsilon) - f(x) < f(y + \epsilon) - f(y).
\]
Hence $f$ is not reptitious. Therefore $\Cal R$ can not be type B by Theorem E. However,
a strictly convex function is also nowhere locally affine. Thus $\Cal R$ can not be type A either by Theorem A. This is a contradiction.
\end{proof}

\subsection{An application to descriptive set theory} We give an application to descriptive set theory. We assume that the reader is familiar with basic notions of the subject (see Kechris \cite{kechris} for an introduction). Consider the Polish space $C^k([0,1])$ of all $C^k$ functions $[0,1] \to \mathbb{R}$ equipped with the topology induced by the semi-norms $f \mapsto \max_{t \in [0,1]} |f^{(j)}(t)|$ for $0 \leq j \leq k$.
Note that $C^0([0,1])$ is the space of continuous functions $[0,1] \to \mathbb{R}$ equipped with the topology of uniform convergence.
We let $C^\infty([0,1])$ be the space of smooth functions with the topology induced by the semi-norms $f \mapsto \max_{t \in [0,1]} |f^{(j)}(t)|$ for $j \in \mathbb{N}$.
Grigoriev \cite{Grigoriev} and later Le Gal \cite{LGal} constructed a comeager $Z \subseteq C^\infty([0,1])$ such that $(\mathbb{R},<,+,\cdot,f)$ is o-minimal for all $f \in Z$. The corresponding result for $C^k([0,1])$ fails.

\begin{thm}\label{thm:E}
The set of all $f \in C^k([0,1])$ such that $(\mathbb{R},<,+,f)$ is type C, is comeager in $C^k([0,1])$ for any $k \in \mathbb{N}$.
\end{thm}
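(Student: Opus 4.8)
The plan is a Baire category argument in the separable complete space $C^k([0,1])$. I will exhibit two comeager conditions on $f\in C^k([0,1])$ that together force $(\R,<,+,f)$ to be neither type A nor type B; by the trichotomy this makes it type C, which proves the theorem. The first condition is that $f^{(k)}$ is nowhere differentiable, equivalently that $f$ is nowhere $C^{k+1}$. If $(\R,<,+,f)$ were type A, Theorem C would force the restriction of $f$ to $(0,1)$ to be $C^{k+1}$ on a dense open subinterval, contradicting nowhere $C^{k+1}$; so this condition rules out type A. The second condition is that $f$ is \emph{rigid}: there are no $x,y\in[1/3,2/3]$, no $\delta>0$ with $x+\delta\le y$ and $y+\delta\le 1$, and no $c\in\R$ with $f(x+\epsilon)-f(y+\epsilon)=c$ for all $\epsilon\in[0,\delta]$. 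A repetition of $f$ on the subinterval $(1/3,2/3)$ is (after rearranging and using continuity) exactly such a configuration with $c=f(x)-f(y)$, so rigidity implies that the restriction of $f$ to $(1/3,2/3)$ is non-repetitious; since this restriction is a continuous definable function on a bounded open interval, Theorem D rules out type B.

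That the first condition is comeager is the classical Banach--Mazur theorem combined with the open mapping theorem: the nowhere differentiable functions form a comeager subset of $C^0([0,1])$, the map $f\mapsto f^{(k)}$ is a bounded linear surjection $C^k([0,1])\to C^0([0,1])$ and hence open, so the preimage of a comeager set is comeager.

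For the second condition, write the set of non-rigid functions as $\bigcup_{n\ge 1}S_n$, where $f\in S_n$ when there exist $x,y\in[1/3,2/3]$ with $x+1/n\le y\le 1-1/n$ and $f(x+\epsilon)-f(y+\epsilon)=f(x)-f(y)$ for all $\epsilon\in[0,1/n]$. Each $S_n$ is closed: given $f_m\to f$ in $C^k([0,1])$ with witnesses $(x_m,y_m)$, pass to a convergent subsequence of witnesses and use uniform convergence to pass the identity to the limit. Each $S_n$ is nowhere dense: the polynomials are dense in $C^k([0,1])$, and for a polynomial $g_0$ and small $c_0>0$ the function $g_0+c_0\cdot 2^{t}$ is not in $S_n$. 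Indeed, a witness would give, after rearranging the defining identity,
\[
g_0(x+\epsilon)-g_0(y+\epsilon)=c+c_0(2^{y}-2^{x})\,2^{\epsilon}\qquad\text{for all }\epsilon\in[0,1/n],
\]
with $c=g_0(x)-g_0(y)+c_0(2^{x}-2^{y})$ and $c_0(2^{y}-2^{x})>0$ since $x<y$; but the left side is a polynomial in $\epsilon$ while the right side is real-analytic and not a polynomial, so by the identity theorem the two cannot agree on the nondegenerate interval $[0,1/n]$. Hence $S_n$ is nowhere dense, $\bigcup_n S_n$ is meager, and the rigid functions are comeager; intersecting with the first comeager condition finishes the argument.

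The main obstacle is the nowhere-density of the $S_n$, that is, producing near an arbitrary $C^k$ function one admitting no repetition at scale $1/n$; the key observation is that adding a tiny exponential $c_0\cdot 2^{t}$ to a polynomial yields a function for which any putative repetition would force a polynomial to coincide with a transcendental function on an interval. Everything else --- Banach--Mazur, the open mapping theorem, and the deductions from Theorems C and D together with the trichotomy --- is routine.
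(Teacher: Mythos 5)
Your argument is correct and follows essentially the same route as the paper's: both rule out type A via Theorem C together with the meagerness of the somewhere-$(k+1)$-differentiable functions, and rule out type B via Theorem D together with a decomposition of the repetitious functions into closed, nowhere dense sets (your $S_n$, the paper's $A_n$). The only real difference is that you supply details the paper leaves to the reader, namely the open-mapping pullback of the Banach--Mazur theorem and the explicit perturbation $g_0 + c_0\cdot 2^{t}$ showing each $S_n$ has empty interior.
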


\noindent While it might not be surprising that expansions of $(\R,<,+)$ by a generic bounded continuous function are not model-theoretically well behaved, Theorem \ref{thm:E} actually shows something stronger: a generic bounded continuous function defines \underline{all} bounded continuous functions over $(\mathbb{R},<,+)$. Loosely speaking, this means that given two generic functions we can recover one from the other by using finitely many boolean operations, cartesian products, and linear operations.
\begin{proof}[Sketch of proof of Theorem \ref{thm:E}]
It is well-known that the set of somewhere \linebreak $(k + 1)$-differentiable functions in $C^k([0,1])$ is meager, the case $k = 1$ being a classical result of Banach \cite{Banach}. Thus the set of all $f \in C^k([0,1])$ such that $(\mathbb{R},<,+,f)$ is type A, is meager by Theorem B. It therefore suffices to show that the collection of all $C^k$ functions $[0,1] \to \mathbb{R}$ definable in type B expansions is meager.
By Theorem E it is enough to prove that the set of reptitious $f \in C^k([0,1])$ is meager.
For each $n \geq 1$ let $A_n$ be the set of functions $f \in C^k([0,1])$ such that for some $x,y \in [0,1]$
\begin{itemize}
\item $\frac{1}{n} \leq y - x$, and
\item $ f(x + \epsilon) - f(x ) = f(y + \epsilon) - f(y) \quad \text{for all } 0 < \epsilon \leq \frac{1}{n}. $
\end{itemize}
Note that every reptitious $C^k$-function $[0,1] \to \R$ is in some $A_n$.
We show that each $A_n$ is nowhere dense. Let $n\geq 1$. As $A_n$ is a closed subset of $C^k([0,1])$, we only need to show that $A_n$ has empty interior in $C^k([0,1])$.
For every $f \in C^k([0,1])$ and $\epsilon > 0$, it is easy to construct a smooth $g: [0,1] \to \mathbb{R}$ such that $g \notin A_n$ and $| f^{(j)}(t) - g^{(j)}(t) | < \epsilon$ for all $0 \leq j \leq k$ and $t \in [0,1]$.
Thus $A_n$ has empty interior.
\end{proof}

\subsection{Applications to Automata Theory and automatic structures}\label{automata}
We finish with an application to automata theory. We first recall the terminology from \cite{BRW}. Let $r \in \N_{\geq 2}$ and $\Sigma_r = \{0,\ldots,r-1\}$. Let $x\in \R$. A \textbf{base $r$ expansion} of $x$ is an infinite $\Sigma_r\cup \{\star\}$-word $a_p\cdots a_0\star a_{-1}a_{-2}\cdots$ such that
\begin{equation}\label{eq:1}
z = -\frac{a_p}{r-1} r^p + \sum_{i=-\infty}^{p-1} a_{i} r^i
\end{equation}
with $a_p \in \{0,r-1\}$ and $a_{p-1},a_{p-2},\ldots \in \Sigma_r$. We will call the $a_i$'s the \textbf{digits} of the base $r$ expansion of $x$. The digit $a_n$ is \textbf{the digit in the position corresponding to $r^{n}$}. We define $V_r(x,u,k)$ to be the ternary predicate on $\R$ that holds whenever there exists a base $r$ expansion $a_p\cdots a_0\star a_{-1}a_{-2}\cdots$ of $x$ such that $u=r^{n}$ for some $n\in \Z$ and $a_{n} = k$. We denote by $\Cal T_r$ the expansion of $(\R,<,+)$ by $V_r$. By \cite[Lemma 3.1]{BH-Cantor} $\Cal T_r$ defines a dense $\omega$-orderable set, and by \cite[Theorem 6]{BRW} the theory of $\Cal T_r$ is decidable. Thus $\Cal T_r$ is type B and does not interpret $( \Cal P(\mathbb{N}), \mathbb{N}, \in, +, \cdot)$.\newline

\noindent The connection to automata theory arises as follows. A set $X\subseteq \R^n$ is \linebreak \textbf{$r$-recognizable} if there is a B\"uchi automaton $\Cal A$ over the alphabet $\Sigma_r^n \cup \{*\}$ which recognizes the set of all base-$r$ encodings of elements of $X$. Such B\"uchi automata are also called \textbf{real vector automata} and were introduced in Boigelot, Bronne and Rasart \cite{BBR97}. By \cite[Theorem 5]{BRW} a subset of $\R^n$ is $r$-recognizable if and only if it is $\Cal T_r$-definable without parameters. From Corollary B we immediately obtain:

\begin{cor}\label{cor:automaton}
Let $f: I \to \R$ be $C^1$ and non-affine. Then for every $r\in \N_{\geq 2}$, the graph of $f$ is not $r$-recognizable.
\end{cor}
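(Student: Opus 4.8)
The plan is to argue by contradiction, using the dictionary between $r$-recognizability and parameter-free definability in $\Cal T_r$ together with the model-theoretic tameness of $\Cal T_r$ recorded just above the statement. First I would suppose, towards a contradiction, that the graph $\Gamma_f \subseteq \R^2$ of some $C^1$ non-affine $f : I \to \R$ is $r$-recognizable. By \cite[Theorem 5]{BRW} this means that $\Gamma_f$ is $\Cal T_r$-definable without parameters; in particular $\Gamma_f$, and hence $f$ together with its domain $I$, is definable in $\Cal T_r$. Thus $f$ is a continuous definable function $I \to \R$ in the structure $\Cal T_r$.

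Next I would invoke the two properties of $\Cal T_r$ established in this subsection: $\Cal T_r$ is type B, since it defines a dense $\omega$-orderable set by \cite[Lemma 3.1]{BH-Cantor}, and it does not define an isomorphic copy of $( \Cal P(\mathbb N), \mathbb N, \in, +, \cdot)$, since its theory is decidable by \cite[Theorem 6]{BRW}. By Fact~\ref{fact:basicfieldtype} a type B expansion is not of field-type, so $\Cal T_r$ satisfies the hypotheses of the final clause of Corollary~B. Applying that clause to $\Cal T_r$ yields that every $\Cal T_r$-definable $C^1$ function $I \to \R$ is affine; in particular $f$ is affine, contradicting the choice of $f$. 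Hence no such graph can be $r$-recognizable.

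I do not expect any genuine obstacle here: the whole content is carried by Corollary~B (which itself rests on Theorem~\ref{thm:field-type}) and by the automata-to-logic translation of \cite{BRW}. The only point deserving a word of care is that $r$-recognizability provides \emph{parameter-free} definability while Corollary~B is phrased for definability \emph{with} parameters; this mismatch is harmless, as a parameter-free definable set is a fortiori definable, so the hypotheses of Corollary~B apply verbatim.
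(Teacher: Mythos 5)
Your proof is correct and is exactly the argument the paper intends: the paper notes that $\Cal T_r$ is type B with decidable theory (hence does not interpret $(\Cal P(\N),\N,\in,+,\cdot)$), identifies $r$-recognizable sets with parameter-free $\Cal T_r$-definable sets via \cite[Theorem 5]{BRW}, and states that the corollary follows immediately from Corollary B. Your only addition is to spell out the (harmless) passage from parameter-free definability to definability with parameters, which the paper leaves implicit.
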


\noindent Block Gorman et al.~\cite{CRF} prove a generalization of Corollary~\ref{cor:automaton}: if $f : I \to \R$ is differentiable and non-affine, then the graph of $f$ is not $r$-recognizable.
(This generalization was attained after the proof of Corollary~\ref{cor:automaton}, but published first.)
One advantage of the more abstract proof of Corollary~\ref{cor:automaton} is that it immediately generalizes to other enumeration systems.
The base $r$-numeration system above may be replaced by other enumeration systems such as the $\beta$-numeration system used in \cite{CLR} (when $\beta$ is a Pisot number) or the Ostrowski numeration system based on a quadratic irrational number used in \cite{H-multiplication}. These enumeration systems also give rise to type B structures with decidable theories.
Thus analogues of Corollary \ref{cor:automaton} also hold for these enumeration systems.
Results similar to Corollary~\ref{cor:automaton} have been proven, for $C^2$ functions, or for more restricted classes of automata, by Anashin~\cite{AnashinR}, Kone\v cn\'y~\cite{Konecny-auto}, and Muller~\cite{Muller-auto}. \newline

\noindent As mentioned above Abu Zaid~\cite{Zaid-thesis} has shown that $(\mathcal{P}(\N),\N,\in,+1)$ does not interpret $(\R,<,+,\cdot)$.
So $(\mathcal{P}(\N),\N,\in,+1)$ cannot interpret an expansion of $(\R,<,+)$ of field-type.
Applying this and Theorem~\ref{thm:c2-multi} we obtain the following generalization of Corollary~\ref{cor:automaton}.

\begin{cor}
Let $U$ be a connected definable open subset of $\R^n$ and let $f : U \to \R^m$ be definable and $C^1$. 
If $\Cal R$ is interpretable in $(\Cal P(\N),\N,\in,+1)$, then $f$ is affine.
In particular if $\Cal R$ is $\omega$-automatic with advice, then $f$ is affine.
\end{cor}


\bibliographystyle{abbrv}
\bibliography{HW-Bib}

\begin{thebibliography}{10}

\bibitem{Zaid-thesis}
F.~{Abu Zaid }.
\newblock {\em Algorithmic solutions via Model Theoretic Interpretations}.
\newblock PhD thesis, RWTH Aachen University, 2016.

\bibitem{ZGL}
F.~{Abu Zaid}, E.~Gr\"adel, L.~u. Kaiser, and W.~Pakusa.
\newblock Model-theoretic properties of {$\omega$}-automatic structures.
\newblock {\em Theory Comput. Syst.}, 55(4):856--880, 2014.

\bibitem{AnashinR}
V.~S. Anashin.
\newblock Quantization causes waves: smooth finitely computable functions are
  affine.
\newblock {\em p-Adic Numbers Ultrametric Anal. Appl.}, 7(3):169--227, 2015.

\bibitem{BH-Cantor}
W.~Balderrama and P.~Hieronymi.
\newblock Definability and decidability in expansions by generalized {C}antor
  sets.
\newblock {\em Preprint arXiv:1701.08426}, 2017.

\bibitem{Banach}
S.~Banach.
\newblock {\"U}ber die {B}aire'sche {K}ategorie gewisser {F}unktionenmengen.
\newblock {\em Studia Mathematica}, 3(1):174--179, 1931.

\bibitem{CRF}
A.~{B}lock Gorman, P.~Hieronymi, E.~Kaplan, R.~Meng, E.~Walsberg, Z.~Wang,
  Z.~Xiong, and H.~Yang.
\newblock Continuous regular functions.
\newblock {\em Log. Methods Comput. Sci.}, 16(1), 2020.

\bibitem{BoasWidder}
R.~P. Boas, Jr. and D.~V. Widder.
\newblock Functions with positive differences.
\newblock {\em Duke Math. J.}, 7:496--503, 1940.

\bibitem{BBR97}
B.~Boigelot, L.~Bronne, and S.~Rassart.
\newblock An improved reachability analysis method for strongly linear hybrid
  systems (extended abstract).
\newblock In {\em Computer Aided Verification}, volume 1254 of {\em Lecture
  Notes in Computer Science}, pages 167--178. Springer, 1997.

\bibitem{BRW}
B.~Boigelot, S.~Rassart, and P.~Wolper.
\newblock On the expressiveness of real and integer arithmetic automata
  (extended abstract).
\newblock In {\em Proceedings of the 25th International Colloquium on Automata,
  Languages and Programming}, ICALP '98, pages 152--163, London, UK, UK, 1998.
  Springer-Verlag.

\bibitem{Buchi}
J.~R. B{\"u}chi.
\newblock On a decision method in restricted second order arithmetic.
\newblock In {\em Logic, {M}ethodology and {P}hilosophy of {S}cience ({P}roc.
  1960 {I}nternat. {C}ongr .)}, pages 1--11. Stanford Univ. Press, Stanford,
  Calif., 1962.

\bibitem{CLR}
{\'E}.~Charlier, J.~Leroy, and M.~Rigo.
\newblock An analogue of {C}obham's theorem for graph directed iterated
  function systems.
\newblock {\em Adv. Math.}, 280:86--120, 2015.

\bibitem{Delon-powers}
F.~Delon.
\newblock {${\bf Q}$} muni de l'arithm\'{e}tique faible de {P}enzin est
  d\'{e}cidable.
\newblock {\em Proc. Amer. Math. Soc.}, 125(9):2711--2717, 1997.

\bibitem{DMS1}
A.~Dolich, C.~Miller, and C.~Steinhorn.
\newblock Structures having o-minimal open core.
\newblock {\em Trans. Amer. Math. Soc.}, 362(3):1371--1411, 2010.

\bibitem{vdd-Powers2}
L.~v.~d. Dries.
\newblock The field of reals with a predicate for the powers of two.
\newblock {\em Manuscripta Math.}, 54(1-2):187--195, 1985.

\bibitem{ElgotRabin}
C.~C. Elgot and M.~O. Rabin.
\newblock Decidability and undecidability of extensions of second (first) order
  theory of (generalized) successor.
\newblock {\em J. Symbolic Logic}, 31(2):169--181, 06 1966.

\bibitem{Engelking}
R.~Engelking.
\newblock {\em Dimension theory}.
\newblock North-Holland Publishing Co., Amsterdam-Oxford-New York; PWN---Polish
  Scientific Publishers, Warsaw, 1978.
\newblock Translated from the Polish and revised by the author, North-Holland
  Mathematical Library, 19.

\bibitem{F-Hausdorff}
A.~Fornasiero.
\newblock Expansions of the reals which do not define the natural numbers.
\newblock {\em arXiv:1104.1699, unpublished note}, 2011.

\bibitem{FHW-Compact}
A.~Fornasiero, P.~Hieronymi, and E.~Walsberg.
\newblock How to avoid a compact set.
\newblock {\em Adv. Math.}, 317:758--785, 2017.

\bibitem{FKMS}
H.~Friedman, K.~Kurdyka, C.~Miller, and P.~Speissegger.
\newblock Expansions of the real field by open sets: definability versus
  interpretability.
\newblock {\em J. Symbolic Logic}, 75(4):1311--1325, 2010.

\bibitem{FM-Sparse}
H.~Friedman and C.~Miller.
\newblock Expansions of o-minimal structures by sparse sets.
\newblock {\em Fund. Math.}, 167(1):55--64, 2001.

\bibitem{Grigoriev}
A.~Grigoriev.
\newblock On o-minimality of extensions of $\mathbb{R}$ by restricted generic
  smooth functions.
\newblock {\em arXiv:0506109}, 2005.

\bibitem{discrete}
P.~Hieronymi.
\newblock Defining the set of integers in expansions of the real field by a
  closed discrete set.
\newblock {\em Proc. Amer. Math. Soc.}, 138(6):2163--2168, 2010.

\bibitem{H-Twosubgroups}
P.~Hieronymi.
\newblock Expansions of the ordered additive group of real numbers by two
  discrete subgroups.
\newblock {\em J. Symbolic Logic}, 81(3):1007--–1027, 2016.

\bibitem{H-multiplication}
P.~Hieronymi.
\newblock When is scalar multiplication decidable?
\newblock {\em Ann. Pure Appl. Logic}, (10):1162--1175, 2019.

\bibitem{HT}
P.~Hieronymi and M.~Tychonievich.
\newblock Interpreting the projective hierarchy in expansions of the real line.
\newblock {\em Proc. Amer. Math. Soc.}, 142(9):3259--3267, 2014.

\bibitem{HW-Monadic}
P.~Hieronymi and E.~Walsberg.
\newblock Interpreting the monadic second order theory of one successor in
  expansions of the real line.
\newblock {\em Israel J. Math.}, 224(1):39--55, 2018.

\bibitem{hrushovski1993new}
E.~Hrushovski.
\newblock A new strongly minimal set.
\newblock {\em Ann. Pure Appl. Logic}, 62(2):147--166, 1993.

\bibitem{KTTT}
T.~Kawakami, K.~Takeuchi, H.~Tanaka, and A.~Tsuboi.
\newblock Locally o-minimal structures.
\newblock {\em J. Math. Soc. Japan}, 64(3):783--797, 2012.

\bibitem{kechris}
A.~S. Kechris.
\newblock {\em Classical descriptive set theory}, volume 156 of {\em Graduate
  Texts in Mathematics}.
\newblock Springer-Verlag, New York, 1995.

\bibitem{Konecny-auto}
M.~Kone\v{c}n\'y.
\newblock Real functions computable by finite automata using affine
  representations.
\newblock {\em Theoret. Comput. Sci.}, 284(2):373--396, 2002.
\newblock Computability and complexity in analysis (Castle Dagstuhl, 1999).

\bibitem{LasStein}
M.~C. Laskowski and C.~Steinhorn.
\newblock On o-minimal expansions of {A}rchimedean ordered groups.
\newblock {\em J. Symbolic Logic}, 60(3):817--831, 1995.

\bibitem{LGal}
O.~Le~Gal.
\newblock A generic condition implying o-minimality for restricted
  {$C^\infty$}-functions.
\newblock {\em Ann. Fac. Sci. Toulouse Math. (6)}, 19(3-4):479--492, 2010.

\bibitem{PL}
J.~Loveys and Y.~Peterzil.
\newblock Linear o-minimal structures.
\newblock {\em Israel J. Math.}, 81(1-2):1--30, 1993.

\bibitem{MPP}
D.~Marker, Y.~Peterzil, and A.~Pillay.
\newblock Additive reducts of real closed fields.
\newblock {\em J. Symbolic Logic}, 57(1):109--117, 1992.

\bibitem{opencore}
C.~Miller and P.~Speissegger.
\newblock Expansions of the real line by open sets: o-minimality and open
  cores.
\newblock {\em Fund. Math.}, 162(3):193--208, 1999.

\bibitem{Muller-auto}
J.-M. Muller.
\newblock Some characterizations of functions computable in on-line arithmetic.
\newblock {\em IEEE Trans. Comput.}, 43(6):752--755, 1994.

\bibitem{PS-Tri}
Y.~Peterzil and S.~Starchenko.
\newblock A trichotomy theorem for o-minimal structures.
\newblock {\em Proc. London Math. Soc. (3)}, 77(3):481--523, 1998.

\bibitem{PSS}
A.~Pillay, P.~Scowcroft, and C.~Steinhorn.
\newblock Between groups and rings.
\newblock {\em Rocky Mountain J. Math.}, 19(3):871--885, 1989.
\newblock Quadratic forms and real algebraic geometry (Corvallis, OR, 1986).

\bibitem{RSW}
J.-P. Rolin, P.~Speissegger, and A.~J. Wilkie.
\newblock Quasianalytic {D}enjoy-{C}arleman classes and o-minimality.
\newblock {\em J. Amer. Math. Soc.}, 16(4):751--777, 2003.

\bibitem{Simon-Book}
P.~Simon.
\newblock {\em A guide to NIP theories}, volume~44 of {\em Lecture Notes in
  Logic}.
\newblock Cambridge University Press, 2015.

\bibitem{Zilberconj2}
B.~Zil'ber.
\newblock Strongly minimal countably categorical theories. ii.
\newblock {\em Siberian Mathematical Journal}, 25(3):396--412, 1984.

\end{thebibliography}
\end{document}